\numberwithin{equation}{section}
\theoremstyle{plain}
\newtheorem{theorem}{Theorem}[section]
\newtheorem{lemma}[theorem]{Lemma}
\newtheorem{corollary}[theorem]{Corollary}
\newtheorem{proposition}[theorem]{Proposition}
\newtheorem{observation}[theorem]{Observation}
\newtheorem{cit}[theorem]{Citation}
\newtheorem*{main:separation}{Theorem~\ref{thrm:separation}}
\newtheorem*{main:pb_sigma_ceiling}{Theorem~\ref{thrm:pb_sigma_ceiling}}
\theoremstyle{definition}
\newtheorem{definition}[theorem]{Definition}
\newtheorem{remark}[theorem]{Remark}
\newcommand{\R}{\mathbb{R}}
\newcommand{\N}{\mathbb{N}}
\newcommand{\Z}{\mathbb{Z}}
\newcommand{\defeq}{\mathrel{\mathop{:}}=}
\newcommand{\lk}{\operatorname{lk}}
\newcommand{\st}{\operatorname{st}}
\newcommand{\Hom}{\operatorname{Hom}}
\newcommand{\Aut}{\operatorname{Aut}}
\newcommand{\id}{\operatorname{id}}
\newcommand{\asta}{{\st}{\uparrow}}
\newcommand{\dst}{{\st}{\downarrow}}
\newcommand{\alk}{{\lk}{\uparrow}}
\newcommand{\dlk}{{\lk}{\downarrow}}
\newcommand{\palk}{{\operatorname{plk}}{\uparrow}}
\newcommand{\cross}{\kappa}
\DeclareMathOperator{\W}{W}
\DeclareMathOperator{\PW}{PW}
\DeclareMathOperator{\Rev}{Rev}
\DeclareMathOperator{\F}{F}
\numberwithin{equation}{section}
\begin{document}

\title{Separation in the BNSR-invariants of the pure braid groups}

\date{\today}
\subjclass[2010]{Primary 20F65;   
                Secondary 57M07, 
                20F36} 

\keywords{Braid group, BNSR-invariant, finiteness properties}

\author{Matthew C.~B.~Zaremsky}
\address{Department of Mathematical Sciences, Binghamton University, Binghamton, NY 13902}
\email{zaremsky@math.binghamton.edu}

\begin{abstract}
 We inspect the BNSR-invariants $\Sigma^m(P_n)$ of the pure braid groups $P_n$, using Morse theory. The BNS-invariants $\Sigma^1(P_n)$ were previously computed by Koban, McCammond and Meier. We prove that for any $3\le m\le n$, the inclusion $\Sigma^{m-2}(P_n)\subseteq \Sigma^{m-3}(P_n)$ is proper, but $\Sigma^\infty(P_n)=\Sigma^{n-2}(P_n)$. We write down explicit character classes in each relevant $\Sigma^{m-3}(P_n)\setminus \Sigma^{m-2}(P_n)$. In particular we get examples of normal subgroups $N\le P_n$ with $P_n/N\cong\Z$ such that $N$ is of type $\F_{m-3}$ but not $\F_{m-2}$, for all $3\le m\le n$.
\end{abstract}

\maketitle
\thispagestyle{empty}


\section*{Introduction}

The $\Sigma$-invariants of a group $G$ are a sequence of geometric objects $\Sigma^m(G)$ ($m\in\N$), defined whenever $G$ is of type $\F_m$, that encode a great deal of information about $G$ and its subgroups. The first invariant, $\Sigma^1$, was introduced by Bieri, Neumann and Strebel \cite{bieri87} and is also called the BNS-invariant. The higher invariants, $\Sigma^2$, $\Sigma^3$, and so forth, culminating in $\Sigma^\infty$, were subsequently introduced by Bieri and Renz \cite{bieri88}, and are also known as BNSR-invariants. Once one knows the BNSR-invariants of a group, one gets a complete classification of which of its coabelian subgroups have which finiteness properties. (A \emph{coabelian} subgroup is a normal subgroup with abelian quotient.)

Once the finiteness properties of a group are known (e.g., whether it is finitely generated, finitely presented, type $\F_m$, etc.), the group's BNSR-invariants are a very natural next question. However, in general the BNSR-invariants of a group are notoriously difficult to compute. A complete computation has been done only for very few families of ``interesting'' groups. The main example where the problem is interesting, difficult, and totally solved is the case of right-angled Artin groups, whose BNSR-invariants were computed by Meier, Meinert and VanWyk \cite{meier98} and, independently, by Bux and Gonzalez \cite{bux99}. There are also some results for general Artin groups \cite{meier01}. The Morse-theoretic approach we take here is related to the methods used in \cite{witzel15,zaremsky15} to compute BNSR-invariants of generalizations of Thompson's groups.

The braid groups $B_n$ and pure braid groups $P_n$ are extremely well studied families of groups. They arise in a variety of contexts, for example in the study of Artin groups, mapping class groups and knot theory. The braid group $B_n$ is of type $\F$, meaning it admits a compact classifying space, so from the point of view of finiteness properties $B_n$ is as ``good'' as it can possibly be. Since $P_n$ has finite index in $B_n$, it also has this property. From the point of view of BNSR-invariants though, the pure braid groups are vastly more complicated than the braid groups. The abelianization of $B_n$, for any $n\ge2$, is just $\Z$, so the $\Sigma^m(B_n)$ all live in a $0$-sphere, and it is easy to show that the whole $0$-sphere equals $\Sigma^\infty(B_n)$ (Corollary~\ref{cor:braid_sig}). In particular, every coabelian subgroup of $B_n$ is of type $\F_\infty$. The abelianization of $P_n$, however, is $\Z^{\binom{n}{2}}$, so the $\Sigma^m(P_n)$ all lie in an $(\binom{n}{2}-1)$-sphere. With such a huge sphere, figuring out which parts of it do or do not lie in which BNSR-invariants is massively more complicated. Correspondingly, it is much more difficult to figure out the finiteness properties of coabelian subgroups of $P_n$.

The first $\Sigma$-invariant, that is the BNS-invariant $\Sigma^1(P_n)$, was computed by Koban, McCammond and Meier \cite{koban15}. In particular for $n\ge3$ there are parts of the character sphere that do not lie in $\Sigma^1(P_n)$, and hence there are coabelian subgroups of $P_n$ (for example the commutator subgroup itself) that are not finitely generated, in contrast to the $B_n$ case. Since $P_3 \cong F_2\times\Z$, every finitely generated coabelian subgroup is already finitely presented and even of type $\F_\infty$. For $n>3$ though, it was unclear whether there exist coabelian subgroups that are finitely generated but not finitely presentable.

In this paper, we use Morse theory to find regions of the character sphere of $P_n$ that lie in $\Sigma^m(P_n)$ for various $m$, and regions that do not. Our main results are:

\begin{main:separation}
 For any $3\le m\le n$, the inclusion $\Sigma^{m-2}(P_n)\subseteq \Sigma^{m-3}(P_n)$ is proper.
\end{main:separation}

\begin{main:pb_sigma_ceiling}
 For any $n\in\N$, $\Sigma^\infty(P_n)=\Sigma^{n-2}(P_n)$.
\end{main:pb_sigma_ceiling}

Consequences of this include that there exist coabelian subgroups of $P_n$, for any $n\ge4$, that are finitely generated but not finitely presentable. More generally there exist coabelian subgroups of $P_n$ that are of type $\F_{m-3}$ but not $\F_{m-2}$, for any $3\le m\le n$. We give explicit examples of these; see Corollary~\ref{cor:separation}. However, a coabelian subgroup of $P_n$ of type $\F_{n-2}$ is automatically of type $\F_\infty$.

Our approach reduces the problem of proving a character class is or is not in a given BNSR-invariant of $P_n$ to a sufficient condition on a finite complex $\PW_n$. Hence it seems likely that our approach could reveal still more about the $\Sigma^m(P_n)$ in the future. We mention that even the problem of fully computing $\Sigma^2(P_4)$ (which equals $\Sigma^\infty(P_4)$) remains open.

This paper is organized as follows. In Section~\ref{sec:invs} we recall the BNSR-invariants and discuss some general results, and in Section~\ref{sec:morse} we set up our Morse-theoretic approach. In Section~\ref{sec:gps} we discuss the (pure) braid groups and their characters, and in Section~\ref{sec:cpx} a complex on which they act. Finally in Section~\ref{sec:results} we use Morse theory on this complex to derive our results about the BNSR-invariants of the pure braid groups.

\subsection*{Acknowledgments} I am grateful to Robert Bieri for many helpful discussions and comments, and in particular for pointing out some references that simplified many things in Section~\ref{sec:invs}.


\section{BNSR-invariants}\label{sec:invs}

To define the BNSR-invariants $\Sigma^m(G)$ of a group $G$, we first need some background. A connected CW-complex $Z$ is called a \emph{classifying space} for $G$, or $K(G,1)$, if $\pi_1(Z)\cong G$ and $\pi_k(Z)=0$ for $k\ge2$. We say $G$ is of \emph{type $\F_n$} if it admits a $K(G,1)$ with compact $n$-skeleton. If $G$ is of type $\F_n$ for all $n$ we say it is of \emph{type $\F_\infty$}. If there exists a compact $K(G,1)$ we say $G$ is of \emph{type $\F$}.

Now suppose $G$ is a group of type $\F_n$ and let $Z$ be a $K(G,1)$ with compact $n$-skeleton. The universal cover $\widetilde{Z^{(n)}}$ of the $n$-skeleton is $(n-1)$-connected, and $G$ acts freely and cocompactly on it. In fact, there is a converse to this: if $G$ acts freely (or just properly) and cocompactly on an $(n-1)$-connected CW-complex, then $G$ is of type $\F_n$. This brings us to the definition of the BNSR-invariants.

\begin{definition}[BNSR-invariants]\label{def:sig_invs}
 Let $G$ act properly cocompactly on an $(n-1)$-connected CW-complex $Y$ (so $G$ is of type $\F_n$). Let $\chi\colon G\to\R$ be a \emph{character} of $G$, i.e., a homomorphism to $\R$. There exists a continuous map $h_\chi \colon Y\to\R$ such that $h_\chi(g.y)=\chi(g)+h_\chi(y)$ for all $g\in G$ and $y\in Y$. For $t\in\R$ let $Y_{\chi\ge t}$ be the full subcomplex of $Y$ supported on those vertices $y$ with $h_\chi(y)\ge t$. For non-trivial $\chi$, let $[\chi]$ be the equivalence class of $\chi$ under scaling by positive real numbers. For $m\le n$, the $m$th \emph{BNSR-invariant} $\Sigma^m(G)$ is defined to be
 $$\Sigma^m(G)\defeq \{[\chi]\mid (Y_{\chi\ge t})_{t\in\R} \text{ is essentially $(m-1)$-connected}\}\text{.}$$
\end{definition}

The definition of $\Sigma^m$ is admittedly quite dense. Let us unpack it a bit. The characters $\chi$ are elements of $\Hom(G,\R)$, which is a vector space $\R^d$ for some $d$. The equivalence classes $[\chi]$ lie in the so called sphere at infinity $S^{d-1}$ of $\R^d$, or \emph{character sphere} $S(G)$. A character $\chi$ induces a \emph{character height function} $h_\chi \colon Y \to \R$. The height function $h_\chi$ can be used to divide $Y$ into regions, for example the region whose vertices have height greater than or equal to $t$. By varying $t$, we get a nested family of these ``half spaces''. Now if, for example, each half space were itself $(m-1)$-connected, then the definition says $[\chi]$ would lie in $\Sigma^m(G)$. Heuristically, $\Sigma^m(G)$ is a catalog of which half spaces are how highly connected. However, this is not quite right, as the presence of the word ``essentially'' indicates. Instead, we do not require each half space to be $(m-1)$-connected itself, but only that for all $t$ there exists $-\infty<s\le t$ such that the inclusion $Y_{\chi\ge t}\to Y_{\chi\ge s}$ induces the trivial map in $\pi_k$ for $k\le m-1$. (If the domain is already $(m-1)$-connected, then of course this will be the case.) For example, maybe $Y_{\chi\ge t}$ is not connected, but if all its components get connected up in $Y_{\chi\ge t-1}$ then that is enough to say $[\chi]$ is in $\Sigma^1(G)$.

It is not obvious from the definition, but $\Sigma^m(G)$ is well defined up to the various non-canonical aspects, e.g., the space $Y$ and the height function $h_\chi$. See for example \cite[Definition~8.1]{bux04}. As a remark, the definition there used the filtration by sets $h_\chi^{-1}([t,\infty))_{t\in\R}$, but thanks to cocompactness this filtration is essentially $(m-1)$ connected if and only if our filtration $(Y_{\chi\ge t})_{t\in\R}$ is.

One important application of BNSR-invariants is the following:

\begin{cit}\cite[Theorem~1.1]{bieri10}\label{cit:sig_fin}
 Let $G$ be a group of type $\F_m$. Let $H$ be any coabelian subgroup of $G$. Then $H$ is of type $\F_m$ if and only if for every non-trivial character $\chi$ of $G$ such that $\chi(H)=0$, we have $[\chi]\in\Sigma^m(G)$.
\end{cit}

For example, if $H=\ker(\chi)$ for $\chi$ a \emph{discrete} character of $G$, i.e., one with image $\Z$, then $H$ is of type $\F_m$ if and only if $[\pm\chi]\in\Sigma^m(G)$.

Other important properties of the $\Sigma^m(G)$ are that they are all open subsets of $S(G)$, and that they are invariant under the natural action of $\Aut(G)$ on $S(G)$. We collect here some additional general results about BNSR-invariants that will be useful in the following sections.

\begin{cit}\cite[Corollary~2.8]{meinert97}\label{cit:push_neg}
 Let $G\stackrel{\pi}{\to} Q$ be a split epimorphism of groups. Let $\chi\colon Q \to \R$ be a character. Let $\widetilde{\chi} \colon G \to \R$ be $\widetilde{\chi}=\chi\circ \pi$. If $[\widetilde{\chi}]\in \Sigma^m(G)$ then $[\chi]\in \Sigma^m(Q)$.
\end{cit}

\begin{lemma}\label{lem:push_pos}
 Let $G\stackrel{\pi}{\to} Q$ be an epimorphism of groups. Suppose that $\ker(\pi)$ is of type $\F_m$. Let $\chi\colon Q \to \R$ be a character. Let $\widetilde{\chi} \colon G \to \R$ be $\widetilde{\chi}=\chi\circ \pi$. If $[\psi]\in \Sigma^m(Q)$ for all non-trivial characters $\psi$ of $Q$ such that $\psi(\ker(\chi))=0$, then $[\widetilde{\chi}]\in \Sigma^m(G)$.
\end{lemma}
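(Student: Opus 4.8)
The plan is to avoid Morse theory entirely for this lemma and deduce it formally from Citation~\ref{cit:sig_fin}, used in both directions of its ``if and only if'', together with the classical fact that the class of groups of type $\F_m$ is closed under group extensions. A preliminary step is to observe that everything is well-defined: since $\Sigma^m(Q)$ appears in the hypothesis, $Q$ is of type $\F_m$; as $G$ is an extension of $Q$ by $\ker(\pi)$, which is of type $\F_m$ by assumption, $G$ is of type $\F_m$ and $\Sigma^m(G)$ is defined. We may also assume $\chi\ne 0$, so that $\widetilde\chi\ne 0$ (as $\pi$ is onto) and $[\widetilde\chi]$ makes sense.

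The core idea is that the hypothesis is secretly a finiteness statement about $N\defeq\ker(\chi)$. This $N$ is a coabelian subgroup of $Q$ (normal, with $Q/N\cong\chi(Q)\le\R$), and the non-trivial characters $\psi$ of $Q$ with $\psi(N)=0$ are exactly those appearing in the hypothesis. So the ``if'' direction of Citation~\ref{cit:sig_fin}, applied to $Q$ and $N$, yields that $N$ is of type $\F_m$. Next I would pass to $\widetilde N\defeq\pi^{-1}(N)=\ker(\widetilde\chi)$: restricting $\pi$ gives a short exact sequence $1\to\ker(\pi)\to\widetilde N\to N\to 1$ (using $\ker(\pi)\subseteq\widetilde N$), so $\widetilde N$ is an extension of the type $\F_m$ group $N$ by the type $\F_m$ group $\ker(\pi)$, hence is itself of type $\F_m$. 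Since $\widetilde N$ is coabelian in $G$ (normal, with $G/\widetilde N\cong Q/N$), the ``only if'' direction of Citation~\ref{cit:sig_fin}, now applied to $G$ and $\widetilde N$, gives $[\psi']\in\Sigma^m(G)$ for every non-trivial character $\psi'$ of $G$ vanishing on $\widetilde N$. As $\widetilde\chi$ is visibly such a character ($\widetilde\chi(\widetilde N)=\chi(N)=0$), this gives $[\widetilde\chi]\in\Sigma^m(G)$, which is what we want.

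I do not anticipate a serious obstacle; the whole point is just to notice that the hypothesis is a disguised finiteness condition on $\ker(\chi)$ and that Citation~\ref{cit:sig_fin} is a two-way street. The one small thing to be careful about is that Citation~\ref{cit:sig_fin} is stated for arbitrary coabelian subgroups, not only kernels of discrete characters, so there is no need to treat non-discrete $\chi$ separately: the quotients $Q/N$ and $G/\widetilde N$ may be dense subgroups of $\R$, but the argument never uses discreteness. The only genuinely external ingredient is extension-closure of type $\F_m$, which is classical and I would simply cite.
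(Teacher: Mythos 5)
Your proof is correct and is essentially identical to the paper's: both use Citation~\ref{cit:sig_fin} to deduce that $\ker(\chi)$ is of type $\F_m$, pass through the short exact sequence $1\to\ker(\pi)\to\ker(\widetilde{\chi})\to\ker(\chi)\to 1$ and extension-closure of type $\F_m$ to conclude $\ker(\widetilde{\chi})$ is of type $\F_m$, and then apply Citation~\ref{cit:sig_fin} in the reverse direction to get $[\widetilde{\chi}]\in\Sigma^m(G)$. The only difference is that you spell out the routine well-definedness checks ($G$ of type $\F_m$, $\chi\ne 0$) that the paper leaves implicit.
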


\begin{proof}
 Consider the restriction $\pi'$ of $\pi$ to $\ker(\widetilde{\chi})$, so $\pi' \colon \ker(\widetilde{\chi}) \to \ker(\chi)$. Note that $\pi'$ is surjective and $\ker(\pi')=\ker(\pi)$, so we have a short exact sequence
 $$1 \to \ker(\pi) \to \ker(\widetilde{\chi}) \to \ker(\chi) \to 1 \text{.}$$
 We are assuming $[\psi]\in \Sigma^m(Q)$ for all non-trivial characters $\psi$ of $Q$ such that $\psi(\ker(\chi))=0$, so Citation~\ref{cit:sig_fin} says that $\ker(\chi)$ is of type $\F_m$. We are also assuming $\ker(\pi)$ is of type $\F_m$, so in fact $\ker(\widetilde{\chi})$ is of type $\F_m$. Again by Citation~\ref{cit:sig_fin}, we conclude that $[\widetilde{\chi}]\in \Sigma^m(G)$.
\end{proof}

This next result was called the $\Sigma^m$-criterion in \cite{meier01}; the homological version is Theorem~4.1 in \cite{bieri88}, and a proof of this homotopical version follows by mimicking that proof.

\begin{cit}\label{cit:raise_chi}
 With the notation from Definition~\ref{def:sig_invs}, $[\chi]\in\Sigma^m(G)$ if and only if there exists a continuous, cellular $G$-equivariant map $\varphi \colon Y^{(m)}\to Y^{(m)}$ satisfying $h_\chi(\varphi(y))>h_\chi(y)$ for all $y\in Y^{(m)}$.
\end{cit}

\begin{corollary}[Center survives]\label{cor:center_survives}
 Let $G$ be a group of type $\F_m$. For a character $\chi \colon G\to\R$, if $\chi(Z(G))\ne 0$ then $[\chi]\in\Sigma^m(G)$.
\end{corollary}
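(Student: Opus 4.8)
The plan is to apply the $\Sigma^m$-criterion, Citation~\ref{cit:raise_chi}. Let $Y$ be an $(m-1)$-connected CW-complex on which $G$ acts properly cocompactly, as furnished by Definition~\ref{def:sig_invs} (such a $Y$ exists since $G$ is of type $\F_m$), and fix a character height function $h_\chi\colon Y\to\R$. Since $\chi(Z(G))\ne 0$, we may choose a central element $z\in Z(G)$ with $\chi(z)>0$, after replacing $z$ by $z^{-1}$ if necessary. I would then take $\varphi\colon Y^{(m)}\to Y^{(m)}$ to be the map $y\mapsto z.y$. This is well defined on the $m$-skeleton and continuous and cellular, because $G$ acts on $Y$ by cellular homeomorphisms and such homeomorphisms preserve skeleta. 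It is $G$-equivariant precisely because $z$ is central: for $g\in G$ and $y\in Y^{(m)}$,
$$\varphi(g.y)=z.(g.y)=(zg).y=(gz).y=g.(z.y)=g.\varphi(y)\text{.}$$
Finally, $h_\chi(\varphi(y))=h_\chi(z.y)=\chi(z)+h_\chi(y)>h_\chi(y)$ for every $y\in Y^{(m)}$. By Citation~\ref{cit:raise_chi}, this gives $[\chi]\in\Sigma^m(G)$.

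There is essentially no obstacle to overcome here; the two points that require a moment's attention are choosing the sign of $z$ so that the height function strictly increases under $\varphi$, and observing that centrality of $z$ is exactly the hypothesis needed for the translation map to commute with the $G$-action. (One could instead deduce the result from Citation~\ref{cit:sig_fin} by analyzing the finiteness properties of the relevant kernels, but the direct construction via the $\Sigma^m$-criterion is cleaner.)
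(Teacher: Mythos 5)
Your proof is correct and is essentially identical to the paper's own argument: both choose a central $z$ with $\chi(z)>0$ and apply Citation~\ref{cit:raise_chi} to the $G$-equivariant map $y\mapsto z.y$. No issues.
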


This is proved in \cite[Lemma~2.1]{meier01}, but the proof is so short we may as well recreate it here.

\begin{proof}
 Let $Y$ be an $(m-1)$-connected complex on which $G$ acts properly cocompactly. Choose $z\in Z(G)$ such that $\chi(z)>0$. Let $\varphi \colon Y^{(m)} \to Y^{(m)}$ be the map $y\mapsto z.y$. Since $z$ is central, this is $G$-equivariant, so by Citation~\ref{cit:raise_chi}, $[\chi]\in\Sigma^m(G)$.
\end{proof}


\section{Morse theory}\label{sec:morse}

When trying to compute the BNSR-invariants of a group, Bestvina--Brady Morse theory \cite{bestvina97} can be a useful tool. Provided one sets things up correctly, Morse theory can reduce a global statement, like ``this filtration is essentially $(m-1)$-connected,'' to local statements, like ``the `ascending' part of any vertex link is $(m-1)$-connected.'' In practice, the links may be finite simplicial complexes, which makes the latter question easier, in theory.

We will focus now on \emph{affine cell complexes}. An affine cell complex is the quotient of a disjoint union of euclidean polytopes by an equivalence relation mapping each polytope injectively into the complex, such that the images of the polytopes, called \emph{cells}, meet in faces (see \cite[Definition~I.7.37]{bridson99}). Each cell of an affine cell complex $Y$ carries an affine structure. The \emph{star} $\st_Y v$ of a vertex $v$ in $Y$ is the subcomplex of $Y$ consisting of cells that are faces of cells containing $v$. The \emph{link} $\lk_Y v$ of $v$ is the set of directions out of $v$ into $\st_Y v$. The link is a simplicial complex, whose simplices consist of such directions into a given cell.

In \cite{bestvina97}, Bestvina and Brady defined a \emph{Morse function} on an affine cell complex $Y$ to be a map $Y\to \R$ that is affine on cells, takes discretely many values on the vertices, and is non-constant on edges. When using Morse theory to compute BNSR-invariants, one often needs some more subtlety, for example if one is dealing with non-discrete character height functions. Our definition of \emph{Morse function} here is similar to the one in \cite{witzel15,zaremsky15}:

\begin{definition}[Morse function]\label{def:morse_fxn}
 Let $Y$ be an affine cell complex and let $(h,f) \colon Y \to \R\times \R$ be a map such that the restrictions of $h$ and $f$ to any cell are affine functions. Suppose that there exists $\varepsilon>0$ such that for any pair of adjacent vertices $v$ and $w$, either $|h(v) - h(w)|\ge \varepsilon$, or else $h(v)=h(w)$ and $f(v)\ne f(w)$. If the set $f(Y^{(0)})$, considered under the usual total ordering of $\R$, has the property that any subset has a maximal element (that is, if the total ordering on $f(Y^{(0)})$ is an inverse well ordering), then we call $(h,f)$ an \emph{ascending-type Morse function}. If the set $f(Y^{(0)})$ with the usual total ordering induced by $\R$ has the property that any subset has a minimal element (so it is a well ordering), then we call $(h,f)$ a \emph{descending-type Morse function}. By a \emph{Morse function} we mean either an ascending-type or descending type Morse function.
\end{definition}

This requirement that the total ordering on $f(Y^{(0)})$ be an (inverse) well ordering is different from the definition of Morse function in \cite{witzel15,zaremsky15}. There we just required $f(Y^{(0)})$ to be finite, which of course implies the present definition. Here we need this more robust definition, since our $f$ (the function $\dot{\omega}$ in Section~\ref{sec:results}) will take infinitely many values.

\begin{remark}[Classical Morse function]\label{rmk:classical_morse}
 We will occasionally also deal with functions that are Morse functions in the sense of Bestvina--Brady, which in our language is a Morse function $(h,f)$ where $f$ is constant, say $0$, and $h(Y^{(0)})$ is a discrete subset of $\R$.
\end{remark}

Given a Morse function $(h,f)$ on an affine cell complex $Y$, one can define the important notion of \emph{ascending link}. First note that, on any cell $c$, $(h,f)$ achieves its maximum and minimum values on unique vertices of $c$. This is because $h$ and $f$ are affine on $c$, and are non-constant on all faces with positive dimension. Here we always use the lexicographic ordering on $\R\times\R$ to compare $(h,f)$-values. Define the \emph{ascending star} $\asta v$ of a vertex $v$ in $Y$ to be the subcomplex of $\st_Y v$ consisting of those cells on which $(h,f)$ achieves its minimum at $v$. Similarly the \emph{descending star} $\dst v$ is the subcomplex of cells whose $(h,f)$-maximum is at $v$. Now define the \emph{ascending link} $\alk v$ to be the subcomplex of $\lk_Y v$ consisting of those directions out of $v$ into $\asta v$. The \emph{descending link} $\dlk v$ is defined analogously. When all the cells are simplices, 
$\lk_Y v$ can be identified with the subcomplex of $\st_Y v$ consisting of those simplices not containing $v$, with $\alk v$ and $\dlk v$ similarly viewed as subcomplexes.

\begin{lemma}[Morse Lemma]\label{lem:morse}
 Let $Y$ be an affine cell complex and $(h,f) \colon Y \to \R\times \R$ an ascending-type Morse function. For $t\in\R$ let $Y_{h\ge t}$ be the subcomplex of $Y$ supported on vertices $v$ with $h(v)\ge t$. Let $s<t$ (allow the case $s=-\infty$). If for all vertices $v$ with $s\le h(v)<t$ the ascending link $\alk v$ with respect to $(h,f)$ is $(m-1)$-connected, then the inclusion $Y_{h\ge t} \to Y_{h\ge s}$ induces an isomorphism in $\pi_k$ for $k\le m-1$ and a surjection in $\pi_m$.
 
 Now suppose instead that $(h,f)$ is a descending-type Morse function. Let $Y_{h\le t}$ be the subcomplex supported on vertices $v$ with $h(v)\le t$. Let $s>t$ (allow $s=\infty$). If for all vertices $v$ with $t<h(v)\le s$ the descending link $\dlk v$ with respect to $(h,f)$ is $(m-1)$-connected, then the inclusion $Y_{h\le t} \to Y_{h\le s}$ induces an isomorphism in $\pi_k$ for $k\le m-1$ and a surjection in $\pi_m$.
\end{lemma}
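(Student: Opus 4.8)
The plan is to adapt the Bestvina--Brady Morse theory argument \cite{bestvina97}, which identifies the effect on homotopy type of passing between sublevel sets of a Morse function with that of coning off the ascending links of the vertices swept over. Call a map of spaces an \emph{$m$-equivalence} if it induces isomorphisms on $\pi_k$ for $k\le m-1$ and a surjection on $\pi_m$; the goal is then to show that the inclusion $Y_{h\ge t}\hookrightarrow Y_{h\ge s}$ is an $m$-equivalence. I would first reduce to the ascending-type case: negating both coordinates of a Morse function interchanges ascending-type with descending-type (it turns an inverse well ordering of $f(Y^{(0)})$ into a well ordering and vice versa), interchanges ascending links with descending links, and sends each superlevel set $Y_{(-h)\ge -r}$ to the sublevel set $Y_{h\le r}$; so the descending-type statement is exactly the ascending-type statement applied to $(-h,-f)$. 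Assume then that $(h,f)$ is ascending-type, that $s<t$, and that $\alk v$ is $(m-1)$-connected whenever $s\le h(v)<t$.

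The homotopy-theoretic input is the standard fact that if a CW-complex $B$ is obtained from a subcomplex $A$ by coning off an $(m-1)$-connected subcomplex $X\subseteq A$, then $A\hookrightarrow B$ is an $m$-equivalence: collapsing the contractible cone shows $B\simeq A/X$, and the quotient map $A\to A/X$ is an $m$-equivalence whenever $X$ is $(m-1)$-connected. I would combine this with two elementary closure properties of the class of $m$-equivalences: it is closed under composition, and — because $S^k$ and $D^{k+1}$ are compact, so that homotopy groups commute with colimits of direct systems of CW-inclusions — it is closed under passing to the colimit of a chain of $m$-equivalences (transfinite, with limit stages given by unions). Thus it suffices to exhibit $Y_{h\ge s}$ as the colimit of a chain of $m$-equivalences starting at $Y_{h\ge t}$.

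To build that chain I would add the vertices $v$ of $Y$ with $s\le h(v)<t$ to $Y_{h\ge t}$ one at a time, in order of decreasing $(h,f)$-value, taking unions at limit stages. The hypotheses of Definition~\ref{def:morse_fxn} are what make this sweep well organized: the $\varepsilon$-separation keeps the $h$-coordinate under control, and the inverse well ordering of $f(Y^{(0)})$ means that at each $h$-level occurring among these vertices the (possibly infinitely many) $f$-values can be listed in strictly decreasing order by an ordinal. When a vertex $v$ is reached, every vertex sharing a cell with $v$ and having strictly larger $(h,f)$-value is already present (either added earlier, or already in $Y_{h\ge t}$), so the cells newly created are exactly those cells of $\st_Y v$ whose $(h,f)$-minimum is at $v$; these assemble into the ascending star $\asta v$, which is the cone on $\alk v$ attached along $\alk v$. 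Since $\alk v$ is $(m-1)$-connected by hypothesis, each successor step is an $m$-equivalence of the kind above, and the conclusion follows.

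I expect the main obstacle to be the bookkeeping of the last paragraph: checking carefully that the decreasing $(h,f)$-sweep genuinely presents $Y_{h\ge s}$ as a \emph{continuous} well-ordered chain of the elementary expansions described, given that $h$ and $f$ may take infinitely many values on vertices. This is the only place where the robustness of Definition~\ref{def:morse_fxn} over the finite-image definitions of \cite{witzel15,zaremsky15} is genuinely used; with that in hand the rest is the usual Morse-theoretic coning argument, which in the classical case of Remark~\ref{rmk:classical_morse} ($f$ constant and $h$ discrete) is \cite{bestvina97}.
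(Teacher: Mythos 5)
Your overall strategy is the paper's: reduce descending to ascending via $(-h,-f)$, sweep in the vertices of the slab one at a time in a transfinite order, identify each relative link with the ascending link, and conclude by the standard coning/colimit argument. The local computation (a neighbour $w$ of $v$ is already present iff $(h,f)(w)>(h,f)(v)$ lexicographically, so the new cells form $\asta v = $ the cone on $\alk v$) and the closure properties of $m$-equivalences are all fine. However, there is a genuine gap in the step you yourself flag as the main obstacle: the sweep ``in order of decreasing $(h,f)$-value'' is not a legitimate transfinite recursion, because the set of $h$-values of vertices in $[s,t)$ need not be inversely well ordered. Definition~\ref{def:morse_fxn} imposes an inverse well ordering only on $f(Y^{(0)})$; on $h$ it imposes only the \emph{local} condition that adjacent vertices differ by $0$ or by at least $\varepsilon$. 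Globally $h(Y^{(0)})$ can be dense (and in the intended application $h=\chi$ takes values in a finitely generated, possibly dense, subgroup of $\R$), so the $h$-levels in the slab can accumulate at $t$ from below and there is then no ``first'' vertex to attach; your lexicographic order has no maximal element and the transfinite induction cannot start.

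The repair is the reduction you omitted: first shrink to the case $t-s\le\varepsilon$ (finitely many slabs if $t-s<\infty$; for $s=-\infty$ use compactness of $S^k$ and $D^{k+1}$ to pass to the colimit of the resulting $\omega$-chain). In such a thin slab, two \emph{adjacent} vertices that both lie in the slab must have equal $h$-value, since an $h$-gap of at least $\varepsilon$ would push one of them out. Hence the order of attachment only needs to respect $f$ on adjacent pairs, and an inverse well order $\prec$ with ``$f(v)<f(w)\Rightarrow v\prec w$'' exists precisely because $f(Y^{(0)})$ is inversely well ordered --- this, not the ordering of $h$-levels, is where the robustness of Definition~\ref{def:morse_fxn} is used. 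With that reduction in place, the rest of your argument goes through and coincides with the paper's proof.
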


\begin{proof}
 The ``descending'' version can be converted into the ``ascending'' version by replacing $(h,f)$ with $(-h,-f)$, so we just need to prove the ascending version. The proof should be compared to the proofs in \cite{bestvina97} of Lemma~2.5 and Corollary~2.6.
 
 If we can prove the result assuming $t-s\le \varepsilon$, then we will be done, thanks to induction (and compactness of spheres, for the $s=-\infty$ case). Let $Y_{s\le h<t}$ be the subcomplex supported on vertices $v$ with $s\le h(v) <t$. In particular, $Y_{h\ge s}$ is obtained from $Y_{h\ge t}$ by attaching, in some order, the vertices of $Y_{s\le h<t}$ along their relative links. The order needs to be an inverse well order, so that the set of vertices not yet attached has a unique maximum, i.e., a well defined ``next'' vertex to attach. Let $\prec$ be any inverse well order on the vertex set of $Y_{s\le h<t}$ satisfying the property that if $f(v)<f(w)$ then $v\prec w$. Since $(h,f)$ is an ascending-type Morse function, the set of $f$-values on vertices is itself inversely well ordered, so such a $\prec$ does indeed exist. When we write $v\prec w$, we mean that $w$ gets attached before $v$, so $v$ can ``see'' $w$ but not vice versa.
 
 The goal is now to show that when we attach a vertex, we do so along an $(m-1)$-connected relative link, so this induces an isomorphism in $\pi_k$ for $k\le m-1$ and a surjection in $\pi_m$, by the Mayer--Vietoris and Seifert--van Kampen Theorems. Then by transfinite induction, which we can use since $\prec$ is an inverse well order, we will get that the inclusion $Y_{h\ge t} \to Y_{h\ge s}$ also induces these types of maps, and we will be done.
 
 When we attach the vertex $v$, we do so along a relative link $\lk_{rel} v$ equal to the full subcomplex of $\lk_Y v$ supported on those vertices $w$ with either $h(w)\ge t$, or else $s\le h(w)<t$ and $v \prec w$. We now claim that this is exactly the ascending $\alk v$. First note that for any $w$ adjacent to $v$, Definition~\ref{def:morse_fxn} says either $|h(v) - h(w)| \ge \varepsilon$, or else $h(v)=h(w)$ and $f(v)\ne f(w)$. For $w$ of the first form, since $t-s\le \varepsilon$ we know $w$ cannot lie in $Y_{s\le h<t}$. Hence $h(w)>h(v)$ if and only if $h(w)\ge t$, so such a $w$ lies in $\alk v$ if and only if it lies in $\lk_{rel} v$. Now suppose $w$ is of the second form, so $h(v)=h(w)$ and $f(v)\ne f(w)$. In particular $s\le h(w)<t$. Now $w$ is in $\alk v$ if and only if $f(v)<f(w)$, and $w\in\lk_{rel} v$ if and only if $v\prec w$, so we need to show that for such $w$, $f(v)<f(w)$ if and only if $v\prec w$. This follows from our construction of $\prec$, since we know $f(v)\ne f(w)$.
 
 Having shown $\lk_{rel} v = \alk v$, we now know that $Y_{h\ge s}$ is obtained from $Y_{h\ge t}$ by coning off the ascending links of vertices in $Y_{s\le h<t}$. By assumption, these are all $(m-1)$-connected, and so we are done.
\end{proof}

As a corollary to the proof, we have:

\begin{corollary}\label{cor:bad_spheres_stay}
 With the same setup as the (ascending version of the) Morse Lemma, if additionally for all vertices $v$ with $s\le h(v)<t$ we have $\widetilde{H}_{m+1}(\alk v)=0$, then the inclusion $Y_{h\ge t} \to Y_{h\ge s}$ induces an injection in $\widetilde{H}_{m+1}$.
 
 With the setup of the descending version, a similar result holds with all the signs reversed and ``ascending'' changed to ``descending''.
\end{corollary}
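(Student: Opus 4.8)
The plan is to run the very same transfinite induction as in the proof of the Morse Lemma (Lemma~\ref{lem:morse}), but now keeping track of degree-$(m+1)$ homology at each vertex-attachment step rather than lower-degree homotopy. Recall from that proof that, after fixing the inverse well order $\prec$, the complex $Y_{h\ge s}$ is obtained from $Y_{h\ge t}$ by attaching the vertices $v$ with $s\le h(v)<t$ one at a time, and that attaching such a $v$ amounts to coning off its ascending link: if $X$ denotes the complex built so far and $X'$ the result of attaching $v$, then $X'=X\cup_{\alk v}(v*\alk v)$, where $v*\alk v$ is the cone on $\alk v$ with cone point $v$.

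The first step is the local computation. Since $v*\alk v$ is contractible, excision for the CW pair $(X',X)$ gives $H_k(X',X)\cong H_k(v*\alk v,\alk v)\cong\widetilde H_{k-1}(\alk v)$, so the long exact sequence of the pair $(X',X)$ reads, in the range we care about,
\[
\widetilde H_{m+1}(\alk v)\longrightarrow \widetilde H_{m+1}(X)\longrightarrow \widetilde H_{m+1}(X')\longrightarrow \widetilde H_m(\alk v)\text{.}
\]
By the added hypothesis the first term vanishes, so by exactness the map $\widetilde H_{m+1}(X)\to\widetilde H_{m+1}(X')$ is injective. (In fact only the vanishing $\widetilde H_{m+1}(\alk v)=0$ is used here, not the $(m-1)$-connectedness carried over from the Morse Lemma.)

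The second step is the transfinite induction. Index the intermediate complexes as $X_0=Y_{h\ge t}\subseteq X_1\subseteq\cdots$ according to $\prec$, with $X_\lambda=\bigcup_{\alpha<\lambda}X_\alpha$ at limit stages and $Y_{h\ge s}=X_\mu$ for the top ordinal $\mu$. By the local step, every successor inclusion $X_\alpha\hookrightarrow X_{\alpha+1}$ is injective on $\widetilde H_{m+1}$. One then shows by transfinite induction that every $X_0\hookrightarrow X_\alpha$ is injective on $\widetilde H_{m+1}$: the successor case is a composite of injections, and at a limit stage one uses that singular homology commutes with the directed union $X_\lambda=\bigcup_{\alpha<\lambda}X_\alpha$, so a class in $\widetilde H_{m+1}(X_0)$ that vanishes in $\widetilde H_{m+1}(X_\lambda)$ already vanishes in some $\widetilde H_{m+1}(X_\alpha)$ with $\alpha<\lambda$, hence was $0$ by the induction hypothesis. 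Taking $\alpha=\mu$ gives the statement, and the descending version follows by applying this to $(-h,-f)$ exactly as in the Morse Lemma.

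There is no serious obstacle here; the one point deserving a moment's care is the limit stage of the transfinite induction, where one must remember that injectivity of maps in homology does pass to directed colimits — which it does, precisely because singular homology is finitary.
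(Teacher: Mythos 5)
Your proof is correct and follows essentially the same route as the paper: both decompose the passage from $Y_{h\ge t}$ to $Y_{h\ge s}$ by coning off ascending links one vertex at a time (as in the proof of the Morse Lemma) and then read off injectivity in $\widetilde{H}_{m+1}$ from the vanishing of $\widetilde{H}_{m+1}(\alk v)$ via Mayer--Vietoris, which is equivalent to your long exact sequence of the pair. The paper leaves the transfinite induction and the limit-stage colimit argument implicit, whereas you spell them out, but there is no substantive difference.
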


\begin{proof}
 In the proof of the Morse Lemma, we saw that $Y_{h\ge s}$ is obtained from $Y_{h\ge t}$ by coning off the ascending links of vertices in $Y_{s\le h<t}$, so this is immediate from the Mayer--Vietoris sequence. (The descending version follows similarly.)
\end{proof}

\medskip

One main use of the Morse Lemma is the case $s=-\infty$:

\begin{corollary}\label{cor:morse}
 Let $Y$ be an $(m-1)$-connected affine cell complex with an ascending-type Morse function $(h,f)$. Suppose there exists $N$ such that, for every vertex $v$ of $Y$ with $h(v)<N$, $\alk v$ is $(m-1)$-connected. Then the filtration $(Y_{h\ge t})_{t\in\R}$ is essentially $(m-1)$-connected.
 
 The descending version holds as well, with all the signs reversed.
\end{corollary}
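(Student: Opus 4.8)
The plan is to bootstrap this out of the $s=-\infty$ case of the Morse Lemma (Lemma~\ref{lem:morse}), using the hypothesis that $Y$ is already $(m-1)$-connected.

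First I would show that for every $t\le N$ the subcomplex $Y_{h\ge t}$ is itself $(m-1)$-connected. Indeed, any vertex $v$ with $h(v)<t$ has $h(v)<N$, so $\alk v$ is $(m-1)$-connected by assumption. Applying the ascending Morse Lemma with this $t$ and with $s=-\infty$, the inclusion $Y_{h\ge t}\hookrightarrow Y$ induces an isomorphism on $\pi_k$ for all $k\le m-1$; since $\pi_k(Y)=0$ in that range and $Y$ is nonempty and path-connected, the same holds for $Y_{h\ge t}$.

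Next, to verify essential $(m-1)$-connectivity of $(Y_{h\ge t})_{t\in\R}$, fix an arbitrary $t\in\R$ and put $s\defeq\min(t,N)$. Then $-\infty<s\le t$ and $s\le N$, so by the previous paragraph $Y_{h\ge s}$ is $(m-1)$-connected; in particular $\pi_k(Y_{h\ge s})=0$ for $1\le k\le m-1$ and $Y_{h\ge s}$ is path-connected. Hence the map induced on $\pi_k$ by the inclusion $Y_{h\ge t}\to Y_{h\ge s}$ is trivial for every $k\le m-1$, which is exactly what the definition of ``essentially $(m-1)$-connected'' demands. The descending version follows by applying the ascending version to the descending-type Morse function $(-h,-f)$, exactly as in the proof of the Morse Lemma, noting $Y_{(-h)\ge t}=Y_{h\le -t}$ so that the conclusion transcribes verbatim with all signs reversed.

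I do not expect a serious obstacle here; the only point worth flagging is that one should \emph{not} try to show that every half-space $Y_{h\ge t}$ is $(m-1)$-connected (this genuinely fails once $t>N$), but rather exploit that in the definition of ``essentially $(m-1)$-connected'' the comparison target $Y_{h\ge s}$ may be taken to be $(m-1)$-connected simply by choosing $s\le N$.
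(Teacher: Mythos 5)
Your proof is correct and follows essentially the same route as the paper: apply the $s=-\infty$ case of the Morse Lemma to see that $Y_{h\ge r}$ is $(m-1)$-connected for all $r\le N$, then for arbitrary $t$ take $s=\min\{N,t\}$ so that the inclusion $Y_{h\ge t}\to Y_{h\ge s}$ is automatically trivial on $\pi_k$ for $k\le m-1$. No issues.
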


\begin{proof}
 By the Morse Lemma, for any $r\le N$ the inclusion $Y_{h\ge r} \to Y=Y_{h\ge-\infty}$ induces an isomorphism in $\pi_k$ for $k\le m-1$. Since $Y$ is $(m-1)$-connected, so is $Y_{h\ge r}$. Now for any $t$ we just need to choose $s=\min\{N,t\}$ and we get that the inclusion $Y_{h\ge t}\to Y_{h\ge s}$ induces the trivial map in $\pi_k$ for $k\le m-1$, simply because $Y_{h\ge s}$ is $(m-1)$-connected. (The descending version follows similarly.)
\end{proof}

It is less straightforward to use Morse theory to prove that a filtration is \emph{not} essentially $m$-connected. However, given some additional assumptions we can say something.

\begin{proposition}\label{prop:neg_conn}
 Let $Y$ be an affine cell complex and let $(h,f)\colon Y\to \R\times\R$ be an ascending-type Morse function. Suppose there exists $N\in\R$ such that for every vertex $v$ with $h(v)< N$ the ascending link $\alk v$ is $(m-1)$-connected and satisfies $\widetilde{H}_{m+1}(\alk v)=0$. Assume moreover that for all $M\in\R$ there exists a vertex $v$ with $h(v)< M$ such that $\widetilde{H}_m(\alk v)\ne 0$. If $\widetilde{H}_{m+1}(Y)=0$, then the filtration $(Y_{h\ge t})_{t\in\R}$ is not essentially $m$-connected.
 
 If $(h,f)$ is instead descending-type, then a similar result holds with all the signs reversed and ``ascending'' changed to ``descending''.
\end{proposition}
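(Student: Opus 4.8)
The plan is to argue by contradiction: suppose the filtration $(Y_{h\ge t})_{t\in\R}$ \emph{is} essentially $m$-connected, and derive a contradiction from the presence of vertices with $\widetilde H_m(\alk v)\neq 0$ arbitrarily far down. Throughout write $X_t\defeq Y_{h\ge t}$.

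First I would reduce the whole question to homology below level $N$. For $s<t\le N$, every vertex $v$ with $s\le h(v)<t$ has $h(v)<N$, so $\alk v$ is $(m-1)$-connected, and the Morse Lemma shows that the inclusion $X_t\to X_s$ induces a $\pi_k$-isomorphism for $k\le m-1$. Essential $m$-connectivity, applied to the same $t$, furnishes a finite $s$ for which this inclusion is trivial on $\pi_k$ for $k\le m$; combining the two, $\pi_k(X_t)=0$ for $k\le m-1$, so $X_t$ is $(m-1)$-connected for every $t\le N$. Moreover, Corollary~\ref{cor:bad_spheres_stay} (with $s=-\infty$, using $\widetilde H_{m+1}(\alk v)=0$ for $h(v)<N$) makes $X_t\to Y$ injective on $\widetilde H_{m+1}$, so $\widetilde H_{m+1}(X_t)=0$ for every $t\le N$, since $\widetilde H_{m+1}(Y)=0$.

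The heart of the argument is a short exact sequence coming from the coning-off process. Fix $s<t\le N$ and, exactly as in the proof of the Morse Lemma, build $X_s$ from $X_t$ by coning off the ascending links of the vertices $v$ with $s\le h(v)<t$, one vertex at a time along an inverse well order, each step replacing a complex $Z$ by $Z\cup\st v$, attached along $\alk v\subseteq Z$. Every complex occurring in this process lies between $X_t$ and $X_s$ and is obtained from $X_s$ by coning off further ascending links, all with vanishing $\widetilde H_{m+1}$; hence, by the Mayer--Vietoris argument that proves Corollary~\ref{cor:bad_spheres_stay}, it too has vanishing $\widetilde H_{m+1}$. Since $\st v$ is contractible, plugging $\widetilde H_{m+1}(Z\cup\st v)=0$ and $\widetilde H_{m-1}(\alk v)=0$ (from the $(m-1)$-connectedness of $\alk v$) into the Mayer--Vietoris sequence of $Z\cup\st v=Z\cup_{\alk v}\st v$ yields
$$0\to \widetilde H_m(\alk v)\to \widetilde H_m(Z)\to \widetilde H_m(Z\cup\st v)\to 0\text{.}$$
Consequently $\widetilde H_m$ \emph{surjects} across every step, so $\widetilde H_m(X_t)\twoheadrightarrow\widetilde H_m(X_s)$ for all $s<t\le N$, and at any step with $\widetilde H_m(\alk v)\neq 0$ the complex $Z$ immediately preceding it has $\widetilde H_m(Z)\neq 0$.

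To finish, apply essential $m$-connectivity with $t=N$ to obtain a finite $s_*\le N$ for which $X_N\to X_{s_*}$ is trivial on $\pi_m$, hence, as $X_N$ and $X_{s_*}$ are $(m-1)$-connected, trivial on $\widetilde H_m$; being also surjective on $\widetilde H_m$, it forces $\widetilde H_m(X_{s_*})=0$. Now choose, by hypothesis, a vertex $v^*$ with $h(v^*)<s_*$ and $\widetilde H_m(\alk v^*)\neq 0$, run the coning-off process downward from $X_{s_*}$, and let $Z$ be the complex just before $v^*$ is attached: then $\widetilde H_m(X_{s_*})\twoheadrightarrow\widetilde H_m(Z)$ gives $\widetilde H_m(Z)=0$, while the short exact sequence for $v^*$ embeds $\widetilde H_m(\alk v^*)$ into $\widetilde H_m(Z)=0$, contradicting $\widetilde H_m(\alk v^*)\neq 0$. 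Thus the filtration is not essentially $m$-connected, and the descending-type case follows by replacing $(h,f)$ with $(-h,-f)$. The step I expect to be the main obstacle is the bookkeeping of the coning-off: one must check that $X_s$ really is built from $X_t$ one ascending link at a time, so that Mayer--Vietoris applies cleanly, and that \emph{every} intermediate complex, not just the sublevel sets $X_r$, still has vanishing $\widetilde H_{m+1}$, since this is exactly what keeps all the short exact sequences, and hence the downward surjectivity of $\widetilde H_m$, valid.
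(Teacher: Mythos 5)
Your proof is correct and follows essentially the same route as the paper's: both argue by contradiction, use the Morse Lemma to show the deep sublevel sets are $m$-connected, and then extract the contradiction from the Mayer--Vietoris sequence of the coning-off at a vertex with $\widetilde{H}_m(\alk v)\ne 0$ together with Corollary~\ref{cor:bad_spheres_stay} and the hypothesis $\widetilde{H}_{m+1}(Y)=0$. The only differences are cosmetic: you read the key exact sequence in the contrapositive direction (contradicting $\widetilde{H}_m(Z)=0$ rather than producing a nonzero class in $\widetilde{H}_{m+1}(Y)$), and you are more explicit about the intermediate complexes---modulo one backwards phrase (``is obtained from $X_s$'' should be ``$X_s$ is obtained from it'').
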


\begin{proof}
 Suppose to the contrary that $(Y_{h\ge t})_{t\in\R}$ is essentially $m$-connected. Say $t< N$, and choose $-\infty<s\le t$ such that the inclusion $Y_{h\ge t} \to Y_{h\ge s}$ induces the trivial map in $\pi_k$ for $k\le m$. Also, since $t< N$, this inclusion induces a surjection in these $\pi_k$ by the Morse Lemma, so in fact $Y_{h\ge s}$ itself is $m$-connected, as are all $Y_{h\ge r}$ for $r\le s$ (for the same reason). Now choose $v$ such that $h(v)< s$ and $\widetilde{H}_m(\alk v)\ne 0$. Since $\widetilde{H}_m(Y_{h\ge r})=0$ for all $r\le s$, Mayer--Vietoris and Corollary~\ref{cor:bad_spheres_stay} say that $\widetilde{H}_{m+1}(Y_{h\ge q})\ne 0$ for any $q\le h(v)$. But this includes $q=-\infty$, and $\widetilde{H}_{m+1}(Y)\ne 0$ contradicts our assumptions. (As usual, the descending version follows similarly.)
\end{proof}

For example, if all the $\alk v$ are homotopy equivalent to (possibly trivial) wedges of $m$-spheres, and for $h(v)$ arbitrarily small are non-trivial wedges of $m$-spheres, then these hypotheses are satisfied.


\section{(Pure) braid groups}\label{sec:gps}

Let $B_n$ denote the $n$-strand braid group. The \emph{standard presentation} for $B_n$ is
$$B_n=\langle s_1,\dots,s_{n-1} \mid s_i s_{i+1}s_i = s_{i+1}s_i s_{i+1} \text{ for } 1\le i\le n-2 \text{ and } s_i s_j=s_j s_i \text{ for } |i-j|>1 \rangle\text{.}$$
Pictorially, $s_i$ is the braid in which the $i$th strand crosses in front of the $(i+1)$st strand. This is called a \emph{positive} crossing, and the reverse is a \emph{negative} crossing. By imposing the additional relations $s_i^2=1$ for all $i$, one obtains the \emph{standard presentation} for the symmetric group $S_n$. Hence there is an epimorphism $\pi \colon B_n\to S_n$ and its kernel is the $n$-strand \emph{pure braid group} $P_n$. We take the action of $S_n$ on $\{1,\dots,n\}$ to be a right action, so the notation $(i)\sigma$ will be common. We will always label the strands of a braid at their tops, by the numbers $1$ through $n$, from left to right. If we then labeled the strands at the bottom, $1$ through $n$ left to right, then each strand would be labeled $i$ at the top and $(i)\pi(x)$ at the bottom for some $i$.

An element of $B_n$ that will be important in all that follows is
$$\Delta\defeq s_1 \cdots s_{n-1} s_1\cdots s_{n-2} \cdots s_1 s_2 s_1 \text{.}$$
This is the element in which each strand crosses in front of every strand to its right, exactly once. See Figure~\ref{fig:delta}. Note that every crossing in $\Delta$ is positive.

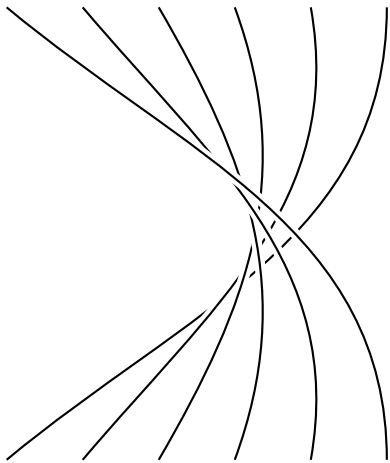
\begin{figure}[htb]
 \begin{tikzpicture}[line width=0.8pt]\centering
  \draw (5,0) to[out=-90, in=40] (0,-6);
  \draw[white, line width=4pt] (4,0) to[out=-80, in=50] (1,-6);
  \draw (4,0) to[out=-80, in=50] (1,-6);
  \draw[white, line width=4pt] (3,0) to[out=-70, in=60] (2,-6);
  \draw (3,0) to[out=-70, in=60] (2,-6);
  \draw[white, line width=4pt] (2,0) to[out=-60, in=70] (3,-6);
  \draw (2,0) to[out=-60, in=70] (3,-6);
  \draw[white, line width=4pt] (1,0) to[out=-50, in=80] (4,-6);
  \draw (1,0) to[out=-50, in=80] (4,-6);
  \draw[white, line width=4pt] (0,0) to[out=-40, in=90] (5,-6);
  \draw (0,0) to[out=-40, in=90] (5,-6);
 \end{tikzpicture}
 \caption{The element $\Delta$ in $P_6$.}\label{fig:delta}
\end{figure}

We collect here some well known facts about $\Delta$. These can be found for example in \cite[Section~1.3.3 and Chapter~6]{kassel08}.

\begin{cit}[$\Delta$ facts]\label{cit:delta_facts}
 The permutation $\pi(\Delta) \in S_n$ is the so called longest word $w_0$, which interchanges $i$ and $n-i+1$, for all $1\le i\le \lfloor n/2\rfloor$. Using the defining relations, one can see that there exists, for any $1\le i\le n-1$, a minimal word representing $\Delta$ and beginning with $s_i$. Also, it is clear that $s_i \Delta=\Delta s_{n-i}$ for all $1\le i\le n-1$. For $n\ge 3$, the center of $B_n$ is infinite cyclic, generated by $\Delta^2$. Since $w_0^2=\id$, the center lies in $P_n$, and in fact the center of $P_n$ also equals $\langle\Delta^2\rangle$ (for all $n$).
\end{cit}

\begin{definition}[Automorphisms]\label{def:auts}
 There are some useful automorphisms of $P_n$ that we discuss now. First, we have the conjugation action of $B_n$ on $P_n$. Later we will consider characters $\omega_{i,j}$ of $P_n$, indexed by the labeling of the strands, and the conjugation action of $B_n$ will permute the indices $i,j$. The other important automorphism is ``look in a mirror''. This takes a braid diagram and sends it to its mirror image. In particular if the automorphism is denoted $\mu$, we have $\mu(s_i)=s_i^{-1}$ for all $i$, and $\mu(\Delta)=\Delta^{-1}$. Note that $\mu$ does not change the labeling of the strands.
\end{definition}

\begin{definition}[Natural projections]\label{def:nat_proj}
 Following \cite{koban15}, we define for any $A\subseteq\{1,\dots,n\}$ an epimorphism $\phi_A \colon P_n \to P_{|A|}$ given by erasing all the strands labeled by numbers not in $A$. Here as before we always label strands $1$ through $n$ from left to right. We call the $\phi_A$ \emph{natural projections}. They are in fact split epimorphisms, with the splitting given by embedding $P_{|A|}$ into $P_n$ by adding the missing strands in the back. See also \cite[Section~1.3.2]{kassel08} for more details.
\end{definition}

\subsection{Characters}\label{sec:chars}

It is easy to abelianize $B_n$ using the standard presentation, and find that $B_n^{ab}\cong \Z$. Hence we have $\Hom(B_n,\R)\cong\R$, and it is noteworthy that this is independent of $n$. In contrast, one can find presentations for $P_n$ (see for example \cite{margalit09}) that reveal that $P_n^{ab}\cong \Z^{\binom{n}{2}}$. Hence $\Hom(P_n,\R)\cong \R^{\binom{n}{2}}$, the dimension of which grows quickly with $n$.

There are bases of these vector spaces that correspond to natural measurements on braids. For $B_n$, a nice basis character, which we will call $\cross$, is the one that reads off the total number of positive crossings of the strands, minus the total number of negative crossings. The character $\cross$ is induced by sending every standard generator of $B_n$ to $1$.

\begin{corollary}\label{cor:braid_sig}
 $\Sigma^\infty(B_n)=S(B_n)$.
\end{corollary}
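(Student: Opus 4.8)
The plan is to exploit the fact that the character sphere of $B_n$ is as small as it could possibly be, together with the ``center survives'' principle of Corollary~\ref{cor:center_survives}. Since $B_n^{ab}\cong\Z$ we have $\Hom(B_n,\R)\cong\R$, so for $n\ge2$ the sphere $S(B_n)$ consists of exactly the two antipodal points $[\cross]$ and $[-\cross]$ (and $S(B_1)=\emptyset$, so there is nothing to prove when $n=1$). Hence it suffices to show that $[\cross]$ and $[-\cross]$ both lie in $\Sigma^m(B_n)$ for every $m$. This is legitimate because $B_n$ is of type $\F$, and in particular of type $\F_m$ for all $m$, so Corollary~\ref{cor:center_survives} is available at every level.

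The remaining input is that $\cross$ is non-zero on the center of $B_n$. By the $\Delta$-facts (Citation~\ref{cit:delta_facts}), $Z(B_n)=\langle\Delta^2\rangle$ for $n\ge3$, while for $n=2$ the group $B_n\cong\Z$ is abelian; in both cases $\Delta^2\in Z(B_n)$. Since $\Delta$ is represented by a positive word of length $\binom{n}{2}$, the character $\cross$ (which sends each standard generator to $1$) satisfies $\cross(\Delta^2)=n(n-1)\ne0$, and likewise $(-\cross)(\Delta^2)=-n(n-1)\ne0$. Thus both $\pm\cross$ are non-trivial on $Z(B_n)$, and Corollary~\ref{cor:center_survives} yields $[\cross],[-\cross]\in\Sigma^m(B_n)$ for all $m$, i.e.\ $[\pm\cross]\in\Sigma^\infty(B_n)$.

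Since $\Sigma^\infty(B_n)$ then contains every point of $S(B_n)$, we conclude $\Sigma^\infty(B_n)=S(B_n)$. There is essentially no obstacle here; the only points requiring a moment's care are the degenerate small cases $n\le2$, and the observation that the specific fact used from Citation~\ref{cit:delta_facts}, namely $\Delta^2\in Z(B_n)$, holds trivially when $n=2$ even though the center is pinned down there only for $n\ge3$.
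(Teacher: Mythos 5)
Your proof is correct and follows exactly the paper's argument: $S(B_n)$ consists only of $[\pm\cross]$, and since $\cross(\Delta^2)\ne0$ the ``center survives'' result (Corollary~\ref{cor:center_survives}) puts both classes in $\Sigma^m(B_n)$ for every $m$. The extra care you take with the cases $n\le 2$ and the explicit value $\cross(\Delta^2)=n(n-1)$ is fine but not a different method.
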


\begin{proof}
 The points of $S(B_n)$ are $[\cross]$ and $[-\cross]$. Since $\cross(\Delta^2)\ne0$, the result follows from Corollary~\ref{cor:center_survives}.
\end{proof}

In particular Citation~\ref{cit:sig_fin} implies that every coabelian subgroup of $B_n$ (for example $[B_n,B_n]$) is of type $\F_\infty$.

What we actually care about is the pure braid groups; the character sphere $S(P_n)$ has dimension $\binom{n}{2}-1$, so here the situation is much more complicated. A standard basis for the vector space $\Hom(P_n,\R)$ is given by, for each $1\le i<j\le n$, the \emph{winding number characters} $\omega_{i,j}$. The character $\omega_{i,j} \colon P_n \to \Z$ is defined to be
$$\omega_{i,j}\defeq \theta \circ \phi_{\{i,j\}} \text{,}$$
where $\phi_{\{i,j\}}$ is the natural projection $P_n\to P_2$ and $\theta \colon P_2 \to \Z$ is the isomorphism sending $\Delta^2$ to $1$. Pictorially, $\omega_{i,j}\colon P_n\to \Z$ reads off the number of times the $i$th and $j$th strands wind completely around each other. The character $\omega_{i,j}$ can also be denoted $\omega_{j,i}$.

The function $\omega_{i,j}$ makes sense on $B_n$ too, it just is not a homomorphism. Let $\zeta \colon B_2 \to \frac{1}{2}\Z$ be the isomorphism sending $\Delta$ to $\frac{1}{2}$, so $\zeta$ restricted to $P_2$ is $\theta$. For $A\subseteq\{1,\dots,n\}$ let $\psi_A \colon B_n \to B_{|A|}$ be the function (\emph{not} homomorphism) that takes a braid in $B_n$, with strands labeled $1$ through $n$ at the top, and erases all the strands except those labeled by elements of $A$. Now define
$$\omega_{i,j}\defeq \zeta \circ \psi_{\{i,j\}} \text{.}$$
We will use the notation $\omega_{i,j}$ for both the function $B_n\to\frac{1}{2}\Z$ and the homomorphism $P_n\to\Z$ since they coincide on $P_n$. We emphasize that when computing $\omega_{i,j}$ of a braid in $B_n$, we label the strands at the top, not the bottom. Note that for any $x\in B_n$ we have
$$\sum\limits_{1\le i<j\le n}2\omega_{i,j}(x) = \cross(x)\text{.}$$
See Figure~\ref{fig:wind} for some examples of $\omega_{i,j}$-values on a non-pure braid.

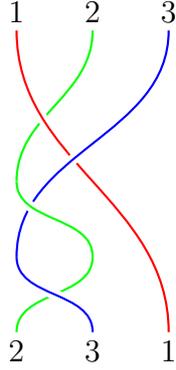
\begin{figure}[htb]
 \begin{tikzpicture}[line width=0.8pt]\centering
  \draw[green] (1,0) to[out=-90, in=90] (0,-2);
  \draw[white, line width=4pt] (0,0) to[out=-90, in=90] (2,-4);
  \draw[red] (0,0) to[out=-90, in=90] (2,-4);
  \draw[white, line width=4pt] (2,0) to[out=-90, in=90] (0,-3);
  \draw[blue] (2,0) to[out=-90, in=90] (0,-3);
  \draw[white, line width=4pt] (0,-2) to[out=-90, in=90] (1,-3) to[out=-90, in=90] (0,-4);
  \draw[green] (0,-2) to[out=-90, in=90] (1,-3) to[out=-90, in=90] (0,-4);
  \draw[white, line width=4pt] (0,-3) to[out=-90, in=90] (1,-4);
  \draw[blue] (0,-3) to[out=-90, in=90] (1,-4);
  \node at (0,.25) {$1$}; \node at (1,.25) {$2$}; \node at (2,.25) {$3$};
  \node at (0,-4.25) {$2$}; \node at (1,-4.25) {$3$}; \node at (2,-4.25) {$1$};
 \end{tikzpicture}
 \caption{We label the strands at the top to compute the $\omega_{i,j}$. The different colors for the strands and the labeling at the bottom is just to make it easier to see how the strands interact. We compute that $\omega_{1,2}=\frac{1}{2}$, $\omega_{1,3}=-\frac{1}{2}$ and $\omega_{2,3}=1$.}\label{fig:wind}
\end{figure}

Despite $\omega_{i,j}\colon B_n \to \frac{1}{2}\Z$ not being a homomorphism, we do have the following:

\begin{observation}\label{obs:twisted_hom}
 Let $x,y\in B_n$ and $1\le i<j\le n$. Then
 $$\omega_{i,j}(xy) = \omega_{i,j}(x) + \omega_{(i)\pi(x),(j)\pi(x)}(y)\text{.}$$
\end{observation}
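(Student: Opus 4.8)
The plan is to reduce the statement to the definition of $\omega_{i,j}$ as $\zeta\circ\psi_{\{i,j\}}$ and understand how the two-strand erasure $\psi_{\{i,j\}}$ behaves under multiplication. The key subtlety is exactly the one flagged in the paragraph before Observation~\ref{obs:twisted_hom}: strands are labeled at the \emph{top}. When we stack $x$ on top of $y$ to form $xy$, the strands of $y$ enter at its top in the order $1,\dots,n$, but they are glued to the strands emerging from the bottom of $x$, which arrive in the order dictated by $\pi(x)$. So the strand that is labeled $i$ at the very top of $xy$ passes through $x$, exits the bottom of $x$ in position $(i)\pi(x)$, and then continues as the strand labeled $(i)\pi(x)$ at the top of $y$.

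First I would record the basic additivity of signed crossing count for the restricted braids. Observe that $\psi_{\{i,j\}}(xy)$, as a braid in $B_2$ (equivalently, via $\zeta$, as an element of $\tfrac12\Z$ counting signed crossings of the two relevant strands divided by two), is obtained by stacking two pictures: the picture of the $i$ and $j$ strands inside $x$, followed by the picture of whichever two strands of $y$ those continue into. By the preceding paragraph those are the strands of $y$ that are labeled $(i)\pi(x)$ and $(j)\pi(x)$ at the top of $y$. Hence the signed crossing number of the two strands of interest in $xy$ equals the signed crossing number of strands $i,j$ in $x$ plus the signed crossing number of strands $(i)\pi(x),(j)\pi(x)$ in $y$; dividing by two gives exactly
\[
\omega_{i,j}(xy) = \omega_{i,j}(x) + \omega_{(i)\pi(x),(j)\pi(x)}(y)\text{.}
\]
A clean way to phrase this rigorously is to note that $\psi_{\{i,j\}}(xy) = \psi_{\{i,j\}}(x)\cdot \psi_{\{(i)\pi(x),(j)\pi(x)\}}(y)$ as a product in $B_2$, where on the right the first factor is the erasure keeping $i,j$ and the second keeps $(i)\pi(x),(j)\pi(x)$; then apply $\zeta$, using that $\zeta$ is a homomorphism on $B_2$ (it is even an isomorphism onto $\tfrac12\Z$, and in $B_2$ the signed crossing count is genuinely additive under stacking).

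The main thing to be careful about — and the only real content — is justifying that identity for $\psi$, i.e.\ that erasing all strands except $i,j$ from the stacked diagram $xy$ yields precisely the concatenation of (the $i,j$ strands of $x$) with (the $(i)\pi(x),(j)\pi(x)$ strands of $y$). This is a matter of tracking which strand connects to which at the interface between $x$ and $y$, which is exactly the content of the top-labeling convention recalled in Section~\ref{sec:gps}: a strand entering the top of $x$ in position $i$ exits the bottom of $x$ in position $(i)\pi(x)$, and the bottom positions of $x$ are identified with the top positions of $y$. Once this bookkeeping is in place, the rest is immediate, and the special case $x,y\in P_n$ (where $\pi(x)=\id$) recovers the familiar statement that each $\omega_{i,j}$ is a homomorphism on $P_n$.
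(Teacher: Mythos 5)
Your argument is correct and is essentially the same as the paper's: both proofs come down to tracking that the strand labeled $i$ at the top of $xy$ is the strand labeled $(i)\pi(x)$ at the top of $y$, so the winding numbers add. Your reformulation via the identity $\psi_{\{i,j\}}(xy)=\psi_{\{i,j\}}(x)\cdot\psi_{\{(i)\pi(x),(j)\pi(x)\}}(y)$ followed by applying the homomorphism $\zeta$ is just a slightly more formal packaging of the same bookkeeping.
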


\begin{proof}
 This is just a matter of keeping track of the labeling of the strands. The strand labeled $i$ at the top of $xy$ is labeled $(i)\pi(x)$ at the top of $y$, and similarly for $j$. Hence the total winding number of strands $i$ and $j$ is their winding number through $x$, plus their winding number through $y$, and we get the desired equation. See Figure~\ref{fig:twist} for an example.
\end{proof}

\begin{figure}[htb]
 \begin{tikzpicture}[line width=0.8pt]\centering
  \draw (1,0) to[out=-90, in=90] (0,-2);
  \draw (2,0) to[out=-90, in=90] (1,-2);
  \draw[white, line width=4pt] (0,0) to[out=-90, in=90] (2,-2);
  \draw (0,0) to[out=-90, in=90] (2,-2);
  \node at (0,.25) {$1$}; \node at (1,.25) {$2$}; \node at (2,.25) {$3$};
  \node at (-.5,-1) {$x$};
  \node at (3.25,-1) {$\omega_{1,3}=1/2$};
  
  \draw[dashed] (-.5,-2.25) -- (2.5,-2.25);
  
  \draw (0,-2.75) -- (0,-4.75);
  \draw (2,-2.75) to[out=-90, in=90] (1,-4.75);
  \draw[white, line width=4pt] (1,-2.75) to[out=-90, in=90] (2,-4.75);
  \draw (1,-2.75) to[out=-90, in=90] (2,-4.75);
  \node at (0,-2.5) {$1$}; \node at (1,-2.5) {$2$}; \node at (2,-2.5) {$3$};
  \node at (-.5,-3.75) {$y$};
  \node at (3.25,-3.75) {$\omega_{3,2}=1/2$};
 \end{tikzpicture}
 \caption{Here $\omega_{1,3}(xy)=1$, $\omega_{1,3}(x)=1/2$ and $\omega_{(1)\pi(x),(3)\pi(x)}(y)=\omega_{3,2}(y)=1/2$, so the equation in Observation~\ref{obs:twisted_hom} is satisfied.}\label{fig:twist}
\end{figure}
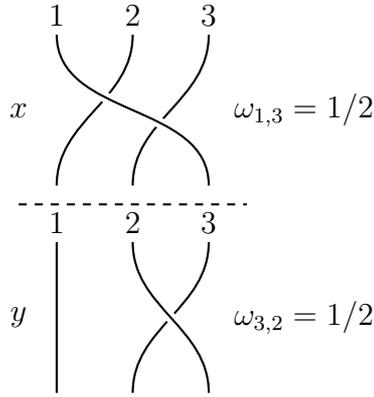

Taking linear combinations of the $\omega_{i,j}$, we can view any character $\chi$ of $P_n$ as a function $B_n \to \R$. If $\chi=\sum a_{i,j}\omega_{i,j}$, denote by $\chi^x$ the character
$$\chi^x \defeq \sum\limits_{1\le i<j\le n} a_{i,j} \omega_{(i)\pi(x),(j)\pi(x)}\text{,}$$
so $\chi(xy)=\chi(x)+\chi^x(y)$ for all $x,y\in B_n$.

Before this subsection we discussed automorphisms of $P_n$. We now know how conjugation by elements of $B_n$ affects characters, namely conjugation by $x$ takes $\chi$ to $\chi^x$ (this is the reason for the notation). The other important automorphism was $\mu$, which is induced by inverting each generator. This takes any character $\chi$ to $-\chi$. In particular since the BNSR-invariants are invariant under automorphisms, we now know that they are closed under taking antipodes:

\begin{observation}[Antipodes]\label{obs:antipodes}
 If $[\chi]\in\Sigma^m(P_n)$ then $[-\chi]\in\Sigma^m(P_n)$. \qed
\end{observation}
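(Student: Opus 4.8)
The plan is to exploit the mirror automorphism $\mu$ of $P_n$ introduced in Definition~\ref{def:auts}, together with the fact recalled in Section~\ref{sec:invs} that each $\Sigma^m(P_n)$ is invariant under the natural action of $\Aut(P_n)$ on the character sphere $S(P_n)$. Concretely, $\Aut(P_n)$ acts on $\Hom(P_n,\R)$ by precomposition, and this invariance means that $[\chi]\in\Sigma^m(P_n)$ if and only if $[\chi\circ\mu]\in\Sigma^m(P_n)$ (note $\mu\circ\mu=\id$, so the distinction between $\mu$ and $\mu^{-1}$ is irrelevant). Thus the entire statement reduces to the identity $\chi\circ\mu=-\chi$ for every character $\chi$ of $P_n$.

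To establish $\chi\circ\mu=-\chi$, since both sides are linear in $\chi$ it suffices to check it on the basis of winding number characters $\omega_{i,j}$. Here I would use that $\mu$ reflects a braid diagram (equivalently, sends each generator $s_i$ to $s_i^{-1}$), so it reverses the sign of every crossing, and that reflecting commutes with the natural projection $\phi_{\{i,j\}}$, since erasing the strands not labeled $i$ or $j$ can be performed either before or after the reflection. Hence $\omega_{i,j}\circ\mu=\theta\circ\phi_{\{i,j\}}\circ\mu=\theta\circ\mu_2\circ\phi_{\{i,j\}}$, where $\mu_2$ denotes the mirror automorphism of $P_2$; and on $P_2$ one has $\mu_2(\Delta^2)=\Delta^{-2}$, so $\theta\circ\mu_2=-\theta$. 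Therefore $\omega_{i,j}\circ\mu=-\omega_{i,j}$ for all $i<j$, and consequently $\chi\circ\mu=-\chi$ for every $\chi$.

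Combining these, if $[\chi]\in\Sigma^m(P_n)$ then applying $\Aut(P_n)$-invariance with the automorphism $\mu$ gives $[-\chi]=[\chi\circ\mu]\in\Sigma^m(P_n)$, which is exactly the claim. I expect no genuine obstacle here: the only point requiring a moment's care is the identity $\chi\circ\mu=-\chi$, and the cleanest route is the two small observations above (reflection reverses crossing signs and commutes with strand erasure). Alternatively, one can simply note that $\mu$ acts on $P_n^{ab}\cong\Z^{\binom{n}{2}}$ as $-\id$, which is immediate from any standard presentation of $P_n$ and at once yields $\chi\circ\mu=-\chi$ for all characters $\chi$.
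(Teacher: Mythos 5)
Your proposal is correct and is exactly the paper's argument: the observation is justified in the preceding paragraph by noting that the mirror automorphism $\mu$ sends every character $\chi$ to $-\chi$ and that the $\Sigma^m$ are $\Aut(P_n)$-invariant. You simply supply the verification that $\chi\circ\mu=-\chi$ (via $\omega_{i,j}\circ\mu=-\omega_{i,j}$, or equivalently that $\mu$ acts as $-\id$ on $P_n^{ab}$), which the paper asserts without proof.
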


We now see more evidence that the element $\Delta$ is important.

\begin{lemma}\label{lem:hom_center}
 Let $\chi \colon P_n \to \R$ be any character, and view $\chi$ as a function from $B_n$ to $\R$ as above. Then for any $x\in B_n$, we have $\chi(x\Delta)=\chi(x)+\chi(\Delta)$.
\end{lemma}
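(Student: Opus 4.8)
The plan is to reduce everything to the single observation that $\omega_{i,j}(\Delta)=\tfrac12$ for \emph{every} pair $i<j$. Recall from the discussion just before the lemma that if we write $\chi=\sum_{1\le i<j\le n}a_{i,j}\omega_{i,j}$, then $\chi(xy)=\chi(x)+\chi^x(y)$ for all $x,y\in B_n$, where $\chi^x=\sum a_{i,j}\,\omega_{(i)\pi(x),(j)\pi(x)}$; this is just Observation~\ref{obs:twisted_hom} extended by linearity. Applying this with $y=\Delta$ gives $\chi(x\Delta)=\chi(x)+\chi^x(\Delta)$, so it suffices to prove that $\chi^x(\Delta)=\chi(\Delta)$.

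To that end I would first check that $\omega_{i,j}(\Delta)=\tfrac12$ for all $1\le i<j\le n$. By definition $\omega_{i,j}(\Delta)=\zeta(\psi_{\{i,j\}}(\Delta))$, and $\psi_{\{i,j\}}(\Delta)$ is obtained from the braid $\Delta$ by erasing all strands except the two labeled $i$ and $j$ at the top. Since $\Delta$ is the braid in which each strand crosses in front of every strand to its right exactly once, and since $i<j$ means the strand labeled $i$ starts to the left of the strand labeled $j$, the erased braid $\psi_{\{i,j\}}(\Delta)\in B_2$ consists of a single positive crossing, i.e.\ it is the half-twist generating $B_2$. As $\zeta$ sends this half-twist to $\tfrac12$, we obtain $\omega_{i,j}(\Delta)=\tfrac12$, independently of the pair $\{i,j\}$.

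Now conclude: since $\pi(x)$ is a permutation of $\{1,\dots,n\}$, the assignment $\{i,j\}\mapsto\{(i)\pi(x),(j)\pi(x)\}$ is a bijection of the set of unordered pairs, so
\[
\chi^x(\Delta)=\sum_{1\le i<j\le n}a_{i,j}\,\omega_{(i)\pi(x),(j)\pi(x)}(\Delta)=\tfrac12\!\!\sum_{1\le i<j\le n}\!\!a_{i,j}=\sum_{1\le i<j\le n}a_{i,j}\,\omega_{i,j}(\Delta)=\chi(\Delta)\text{,}
\]
using that every $\omega$-value occurring is $\tfrac12$. Combining this with $\chi(x\Delta)=\chi(x)+\chi^x(\Delta)$ yields $\chi(x\Delta)=\chi(x)+\chi(\Delta)$, as desired.

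I do not expect a genuine obstacle here; the only point requiring care is the claim that $\psi_{\{i,j\}}(\Delta)$ is exactly one \emph{positive} crossing in $B_2$. This rests on the standard fact that in $\Delta$ any two strands cross exactly once and that all crossings of $\Delta$ are positive, together with keeping the top-labeling convention straight so that the surviving crossing comes out positive rather than negative.
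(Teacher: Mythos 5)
Your proof is correct and follows essentially the same route as the paper's: reduce via Observation~\ref{obs:twisted_hom} (and linearity) to the claim that $\omega_{i,j}(\Delta)=\tfrac12$ for every pair, which is exactly the paper's argument. The only difference is that you spell out why $\omega_{i,j}(\Delta)=\tfrac12$ (one positive crossing survives the erasure), a fact the paper asserts without comment.
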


\begin{proof}
 It suffices to check this for $\chi=\omega_{i,j}$. By Observation~\ref{obs:twisted_hom}, we just need to prove that $\omega_{(i)\pi(x),(j)\pi(x)}(\Delta) = \omega_{i,j}(\Delta)$ for all $x\in B_n$ and all $i,j$. Indeed, $\omega_{i,j}(\Delta)=1/2$ for all $i$ and $j$, so we are done.
\end{proof}

\begin{corollary}[$\Delta$ survives]\label{cor:delta_survives}
 Let $\chi \colon P_n \to \R$ be any character, and view $\chi$ as a function from $B_n$ to $\R$ as above. If $\chi(\Delta)\ne 0$ then $[\chi]\in\Sigma^\infty(P_n)$.
\end{corollary}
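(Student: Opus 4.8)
The plan is to reduce this immediately to the ``Center survives'' corollary (Corollary~\ref{cor:center_survives}) by feeding it the element $\Delta^2$, which generates the center $Z(P_n)$ by the $\Delta$ facts (Citation~\ref{cit:delta_facts}). The key observation is that although $\Delta$ itself need not lie in $P_n$, its square does, since $\pi(\Delta)=w_0$ has order $2$ in $S_n$, so $\chi(\Delta^2)$ makes sense both as a value of the genuine character $\chi\colon P_n\to\R$ and as a value of its extension to a function $B_n\to\R$, and these agree.

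First I would note that $\chi(\Delta^2)=2\chi(\Delta)$. This is just Lemma~\ref{lem:hom_center} with $x=\Delta$, which gives $\chi(\Delta\cdot\Delta)=\chi(\Delta)+\chi(\Delta)$. Hence if $\chi(\Delta)\ne 0$ then $\chi(\Delta^2)=2\chi(\Delta)\ne 0$, so $\chi$ is non-trivial on $Z(P_n)=\langle\Delta^2\rangle$. Since $P_n$ has finite index in the type-$\F$ group $B_n$, it is of type $\F_\infty$, so Corollary~\ref{cor:center_survives} applies for every $m$ and yields $[\chi]\in\Sigma^m(P_n)$ for all $m$; that is, $[\chi]\in\Sigma^\infty(P_n)$.

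There is essentially no obstacle here: the only point requiring a moment's care is the bookkeeping between ``$\chi$ as a character of $P_n$'' and ``$\chi$ as a function on $B_n$'', which is harmless precisely because the two coincide on $P_n$ and $\Delta^2\in P_n$. All the real work has already been done in Lemma~\ref{lem:hom_center} and Corollary~\ref{cor:center_survives}.
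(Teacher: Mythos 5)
Your proposal is correct and is essentially identical to the paper's own proof: both apply Lemma~\ref{lem:hom_center} with $x=\Delta$ to get $\chi(\Delta^2)=2\chi(\Delta)\ne 0$, note that $\Delta^2$ generates $Z(P_n)$, and invoke Corollary~\ref{cor:center_survives}. The extra remarks about $\Delta^2\in P_n$ and $P_n$ being of type $\F_\infty$ are correct and harmless elaborations of what the paper leaves implicit.
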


\begin{proof}
 By Lemma~\ref{lem:hom_center}, $\chi(\Delta^2)=2\chi(\Delta)$, so if this is non-zero then $\chi(Z(P_n))\ne0$, and we are done by Corollary~\ref{cor:center_survives}.
\end{proof}

Since $P_n$ has an $(n-1)$-dimensional compact classifying space, for example the Brady complex \cite{brady01}, Citation~\ref{cit:raise_chi} tells us that $\Sigma^{n-1}(P_n)=\Sigma^\infty(P_n)$. We can do one better than this though:

\begin{theorem}\label{thrm:pb_sigma_ceiling}
 For any $n\in\N$, $\Sigma^{n-2}(P_n)=\Sigma^\infty(P_n)$.
\end{theorem}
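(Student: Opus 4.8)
The statement $\Sigma^{n-2}(P_n)=\Sigma^\infty(P_n)$ is equivalent to saying that every character class $[\chi]\in\Sigma^{n-2}(P_n)$ already lies in $\Sigma^m(P_n)$ for all $m\geq n-2$; since $P_n$ has geometric dimension $n-1$ we already know $\Sigma^{n-1}(P_n)=\Sigma^\infty(P_n)$, so the only thing left to prove is the single inclusion $\Sigma^{n-2}(P_n)\subseteq\Sigma^{n-1}(P_n)$. I would attack this by reducing to the case of \emph{discrete} characters $\chi$ with $\chi(\Delta)=0$ — if $\chi(\Delta)\neq 0$ then $[\chi]\in\Sigma^\infty(P_n)$ by Corollary~\ref{cor:delta_survives} and there is nothing to prove, and rational points are dense in the (open) invariant $\Sigma^{n-2}(P_n)$ while the invariants are open, so it suffices to treat $[\chi]$ with $\chi$ discrete. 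For a discrete $\chi$ with kernel $N=\ker\chi$, Citation~\ref{cit:sig_fin} translates the problem into: if $N$ is of type $\F_{n-2}$, then $N$ is of type $\F_{n-1}$ (equivalently $\F_\infty$).

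The engine for this kind of ``dimension shift'' is a Bestvina--Brady argument on the compact $(n-1)$-dimensional classifying space $Z$ for $P_n$ (the Brady complex \cite{brady01}): lift $\chi$ to a height function $h_\chi$ on the universal cover $\widetilde Z$, which is contractible and $(n-2)$-connected, and observe that $\widetilde Z$ has dimension $n-1$. The key point is a general fact about Morse functions on complexes of dimension $d$: an ascending (or descending) link sitting inside a $(d-1)$-sphere's worth of simplices is automatically a proper subcomplex of an $(n-2)$-sphere, hence has vanishing reduced homology in degree $n-2$ and above; combined with $(n-3)$-connectivity of the ascending links (which is what membership in $\Sigma^{n-2}$ buys us, via the Morse Lemma run in reverse), the links are actually $(n-2)$-connected ``for free'' because a proper subcomplex of an $(n-2)$-sphere that is $(n-3)$-connected need not be simply higher-connected in general — so the honest statement has to be homological and invoke that $N$ acts freely cocompactly on an $(n-2)$-connected complex of dimension $n-1$, whence $N$ is $\F_{n-1}$ automatically since $H_{n-1}$ of that complex with the relevant coefficients is all that could obstruct, and a group acting freely cocompactly on an $(n-2)$-connected complex of dimension $n-1$ is of type $\F_{n-1}$.

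Concretely, the cleanest route: if $[\chi]\in\Sigma^{n-2}(P_n)$ with $\chi$ discrete and $\chi(\Delta)=0$, then by Citation~\ref{cit:sig_fin} $N=\ker\chi$ is of type $\F_{n-2}$, so $N$ acts freely cocompactly on some $(n-3)$-connected complex $X$; but one can take $X$ to be an $(n-2)$-skeleton of a $K(N,1)$, and then glue in the finitely many $(n-1)$-cells coming from the cocompact $(n-1)$-dimensional $K(P_n,1)$ upstairs — more precisely, $N$ acts freely cocompactly on $\widetilde Z_{h_\chi\geq t}$ for suitable $t$, which is $(n-3)$-connected (that's what $\Sigma^{n-2}$ gives), of dimension $n-1$, and then one checks it is actually $(n-2)$-connected because $\widetilde Z$ itself is contractible and the Morse Lemma / Corollary~\ref{cor:bad_spheres_stay} forces $\widetilde H_{n-1}(\widetilde Z_{h_\chi\geq t})$ to inject into $\widetilde H_{n-1}(\widetilde Z)=0$ while $\widetilde H_{n-2}$ must already vanish since there are no cells above dimension $n-1$ to kill an $(n-2)$-cycle other than those already present. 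So $\widetilde Z_{h_\chi\geq t}$ is $(n-2)$-connected, $N$ acts freely cocompactly on it, hence $N$ is of type $\F_{n-1}$, and by Citation~\ref{cit:sig_fin} again $[\chi]\in\Sigma^{n-1}(P_n)=\Sigma^\infty(P_n)$.

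\textbf{Main obstacle.} The delicate point is the jump from $(n-3)$-connected to $(n-2)$-connected for the sublevel complex $\widetilde Z_{h_\chi\geq t}$. Essential $(n-3)$-connectedness of the filtration does \emph{not} by itself give that an individual $\widetilde Z_{h_\chi\geq t}$ is $(n-2)$-connected; I would need to exploit that $\widetilde Z$ is $(n-1)$-dimensional and contractible, so that the ``extra'' homology in degree $n-2$ has nowhere to go — formally, using the Morse Lemma for the descending filtration $\widetilde Z_{h_\chi\leq s}$ to show the inclusion $\widetilde Z_{h_\chi\geq t}\hookrightarrow\widetilde Z$ is $(n-2)$-connected, via a dimension count on descending links (which live in $(n-2)$-spheres and are thus automatically $(n-3)$-connected if they are, say, contractible or have the right homology, but in the worst case one argues directly that a subcomplex of an $(n-2)$-sphere which is $(n-3)$-connected is $(n-2)$-connected precisely when it is not the whole sphere-minus-a-point pathology — here one must lean on the actual structure of the links in the Brady complex, or better, avoid links entirely and use the homological $\Sigma$-criterion). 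A slicker way around the obstacle is to use Citation~\ref{cit:raise_chi} directly: produce a $P_n$-equivariant map $\varphi\colon \widetilde Z^{(n-1)}\to\widetilde Z^{(n-1)}$ raising $h_\chi$, which exists because $\widetilde Z^{(n-1)}=\widetilde Z$ is $(n-2)$-connected (indeed contractible) and $[\chi]\in\Sigma^{n-2}(P_n)$ already supplies such a map on $\widetilde Z^{(n-2)}$, and the obstruction to extending it over the top-dimensional cells lies in $\pi_{n-2}$ of a sublevel set, which vanishes — that obstruction-theoretic extension is exactly where $\Sigma^{n-2}$ membership is consumed, and dimension $n-1$ guarantees there is no further obstruction.
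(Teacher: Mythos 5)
Your framing is right — everything reduces to the single inclusion $\Sigma^{n-2}(P_n)\subseteq\Sigma^{n-1}(P_n)$, and the case $\chi(\Delta)\neq 0$ is dispatched by the center — but the core of your argument has a genuine gap, and you have half-noticed it yourself in your ``main obstacle'' paragraph without resolving it. Every mechanism you propose for the jump from $(n-3)$-connected to $(n-2)$-connected (the homology injection from Corollary~\ref{cor:bad_spheres_stay}, the ``nowhere for an $(n-2)$-cycle to go'' heuristic, the obstruction-theoretic extension of $\varphi$ over the top cells) uses only that $P_n$ acts freely cocompactly on a contractible complex of dimension $n-1$. But that information alone cannot prove $\Sigma^{d-1}(G)=\Sigma^{d}(G)$ for a group $G$ of geometric dimension $d$: the group $F_2\times F_2$ has a compact $2$-dimensional classifying space, yet the Bieri--Stallings kernel of the character sending all four generators to $1$ is finitely generated and not finitely presented, so $\Sigma^1(F_2\times F_2)\neq\Sigma^2(F_2\times F_2)$. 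Concretely, an $(n-2)$-cycle in $\widetilde Z_{h_\chi\ge t}$ is killed in $\widetilde Z$ by an $(n-1)$-chain that need not live in any superlevel set, and the obstruction to extending $\varphi$ over an $(n-1)$-cell lies in $\pi_{n-2}$ of a \emph{superlevel} set, whose vanishing is exactly the $\Sigma^{n-1}$ condition you are trying to establish — the argument is circular. The input the paper actually uses, and which is absent from your proposal, is the splitting $P_n\cong H\times\Z$ with $H=P_n/Z(P_n)$ (split off the center via $\omega_{1,2}$ and $\Delta^2$): $H$ is the mapping class group of the $(n+1)$-punctured sphere and acts freely cocompactly on a contractible complex of dimension $n-2$, so Citation~\ref{cit:raise_chi} gives $\Sigma^{n-2}(H)=\Sigma^\infty(H)$ directly, and one then transfers back to $P_n$ using Citation~\ref{cit:push_neg} and the product structure. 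The dimension drop by one comes from passing to the quotient by the center, not from any Morse-theoretic bookkeeping on the $(n-1)$-dimensional complex.

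A secondary, independent error: your reduction to discrete characters runs the topology backwards. Openness of $\Sigma^{n-1}(P_n)$ lets you pass from a point to a neighborhood, not from a dense subset of the open set $\Sigma^{n-2}(P_n)$ to all of it; the complement of $\Sigma^{n-1}(P_n)$ is closed, so a non-discrete $[\chi]\in\Sigma^{n-2}(P_n)$ could a priori lie outside $\Sigma^{n-1}(P_n)$ even if all nearby discrete classes lie inside. (This is why the paper's proof works with arbitrary characters and never invokes Citation~\ref{cit:sig_fin} for this theorem.)
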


\begin{proof}
 First note that the character $\omega_{1,2} \colon P_n \to \Z$ splits, via the map $\Z\to P_n$ sending $1$ to $\Delta^2$, and $\Delta^2$ is central, so $P_n\cong (P_n/Z(P_n)) \times \Z$. (Of course we could have used any $\omega_{i,j}$.) Now, $H\defeq P_n/Z(P_n)$ is isomorphic to the mapping class group of the $(n+1)$-punctured sphere (see for example \cite[Chapter~9]{farb12}), which has cohomological dimension $n-2$ \cite{harer86} and acts freely cocompactly on a contractible $(n-2)$-dimensional complex (see for example \cite[Corollary~1.3]{aramayona14} for an even stronger result). By Citation~\ref{cit:raise_chi} then, $\Sigma^{n-2}(H)=\Sigma^\infty(H)$.

 We now return to $P_n\cong H\times\Z$ itself. Let $\chi\colon P_n\to\R$ be a non-trivial character, and suppose $[\chi]\in\Sigma^{n-2}(P_n)$. If $\chi(Z(P_n))\ne 0$ then $[\chi]\in\Sigma^\infty(P_n)$ by Corollary~\ref{cor:center_survives}, so assume $\chi(Z(P_n))=0$. Then $\chi$ is induced via $P_n\to H$ by a character $\chi'$ of $H$. By Citation~\ref{cit:push_neg}, $[\chi']\in\Sigma^{n-2}(H)$. As we have just seen, this means $[\chi']\in\Sigma^\infty(H)$. Now since $H$ is of type $\F_\infty$ and $P_n$ is a trivial HNN-extension of $H$, we conclude, for example from \cite[Theorem~2.3]{bieri10}, that $[\chi]\in\Sigma^\infty(P_n)$.
\end{proof}


\section{The complex}\label{sec:cpx}

In this section we define a locally compact simplicial complex $X$ on which the braid group $B_n$ acts freely and vertex transitively. Since $P_n$ has finite index in $B_n$, its action on $X$ is cocompact, and hence can be used to ``reveal'' the BNSR-invariants of $P_n$. This complex arises from a \emph{Garside structure} on the symmetric group $S_n$, and has appeared before, see for example \cite{charney04}. For our purposes it will be useful to view $X$ in a particular, slightly unconventional way, so we will start at the beginning and stay mostly self-contained.

Recall the standard group presentation for the braid group $B_n$:
$$\langle s_1,\dots,s_{n-1} \mid s_i s_{i+1}s_i = s_{i+1}s_i s_{i+1} \text{ for } 1\le i\le n-2 \text{ and } s_i s_j=s_j s_i \text{ for } |i-j|>1 \rangle \text{.}$$
Viewing this as a monoid presentation, we get the monoid $B^+_n$ of positive braids, that is, the subset of $B_n$ consisting of elements representable by words in the $s_i$ (and not the $s_i^{-1}$). Pictorially, a \emph{positive braid} is one representable in such a way that every crossing is positive. Note that $\cross \colon B^+_n \to \N_0$ is a monoid homomorphism, and $\cross(p)=0$ for $p\in B^+_n$ if and only if $p=1$, so $B^+_n$ has no non-trivial units.

Define a relation $\le$ on $B_n$ by saying $x\le y$ whenever $xp=y$ for $p\in B^+_n$. Note that left multiplication by $B_n$ preserves this relation. Since $B^+_n$ is a monoid with no non-trivial units, $\le$ is a partial order. Since $s_i\le\Delta$ for all $i$ and $\Delta^2$ is central (Citation~\ref{cit:delta_facts}), every $x\in B_n$ satisfies $x\le \Delta^k$ for some $k>0$. Thus, any two elements of $B_n$ have a common upper bound, i.e., the poset is directed. In particular, the geometric realization $|B_n|$ is a contractible space on which $B_n$ acts freely (the action is free on vertices by construction, and it is easy to see that the stabilizer of a vertex equals the intersection of its vertex stabilizers). It is not, however, cocompact; indeed it is not even finite dimensional. Our next goal is to retract $|B_n|$ down to a $B_n$-invariant subcomplex that is cocompact.

\begin{definition}
 For $x\le y$, write $x\preceq y$ if moreover $y\le x\Delta$.
\end{definition}

These next two results follow quickly from the definition.

\begin{lemma}\label{lem:shift}
 If $x\preceq y$ then $y\preceq x\Delta$ and $y\Delta^{-1} \preceq x$.
\end{lemma}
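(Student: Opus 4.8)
The plan is to reduce the lemma to one structural fact about $\Delta$: conjugation by $\Delta$ preserves the positive braid monoid $B^+_n$. Indeed, by Citation~\ref{cit:delta_facts} we have $s_i\Delta=\Delta s_{n-i}$ for every $i$, so $\delta\colon g\mapsto \Delta^{-1}g\Delta$ is an automorphism of $B_n$ with $\delta(s_i)=s_{n-i}$; it therefore sends any positive word in the $s_i$ to another positive word, giving $\delta(B^+_n)=B^+_n$. Since $\Delta^2$ is central, one also has $\delta(g)=\Delta g\Delta^{-1}$, so conjugating a positive braid past $\Delta$ on either side keeps it positive. I would record this as the opening observation.

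Next I would unwind the hypothesis. By definition $x\preceq y$ means $x\le y$ and $y\le x\Delta$, i.e.\ there are positive braids $p,q\in B^+_n$ with $xp=y$ and $yq=x\Delta$ (we may use cancellation freely since $B_n$ is a group).

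For the first conclusion $y\preceq x\Delta$, I must check $y\le x\Delta$ and $x\Delta\le y\Delta$. The former is exactly the hypothesis $y\le x\Delta$. For the latter, $y\Delta=xp\Delta=(x\Delta)\,\delta(p)$, and $\delta(p)\in B^+_n$, so $x\Delta\le y\Delta$, as needed. For the second conclusion $y\Delta^{-1}\preceq x$, I must check $y\Delta^{-1}\le x$ and $x\le (y\Delta^{-1})\Delta=y$; the second of these is just the hypothesis $x\le y$. For the first, $x=yq\Delta^{-1}=(y\Delta^{-1})(\Delta q\Delta^{-1})=(y\Delta^{-1})\,\delta(q)$ with $\delta(q)\in B^+_n$, so $y\Delta^{-1}\le x$.

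There is no real obstacle here: the argument is a short formal computation inside the group $B_n$. The only point demanding a little care is the bookkeeping of where $\Delta$ appears relative to the positive factor — one must push the positive braid through $\Delta$ via the automorphism $\delta$ rather than pretending it commutes — and noticing that $\delta$ acts the same way whether $\Delta$ is moved to the left or to the right, which is where centrality of $\Delta^2$ enters.
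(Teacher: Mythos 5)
Your proposal is correct and follows essentially the same route as the paper's proof: both verify the two order relations by pushing the positive factors $p$ and $q$ past $\Delta$ using $s_i\Delta=\Delta s_{n-i}$, i.e.\ the identities $y\Delta=x\Delta(\Delta^{-1}p\Delta)$ and $x=y\Delta^{-1}(\Delta q\Delta^{-1})$. Packaging the conjugation as an automorphism $\delta$ preserving $B^+_n$ is a harmless reorganization of the same computation.
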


\begin{proof}
 We are told that $x\le y\le x\Delta$. Say $y=xp$ for $p\in B^+_n$. Then since $s_i \Delta=\Delta s_{n-i}$ for all $1\le i\le n-1$ (see Citation~\ref{cit:delta_facts}), we see that $\Delta^{-1} p \Delta \in B^+_n$. In particular $y\Delta=x\Delta(\Delta^{-1}p\Delta)$ tells us that $x\Delta\le y\Delta$, and so indeed $y\preceq x\Delta$. Now say $x\Delta=yq$ for $q\in B^+_n$. Then since $\Delta q \Delta^{-1} \in B^+_n$, similar to the previous case, the equation $y\Delta^{-1}(\Delta q\Delta^{-1})=x$ tells us that $y\Delta^{-1} \le x$, and so $y\Delta^{-1} \preceq x$.
\end{proof}

\begin{observation}\label{obs:cotrans}
 If $x\le y\le z$ and $x\preceq z$ then $x\preceq y\preceq z$.
\end{observation}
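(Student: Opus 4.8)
The plan is to unwind the two definitions and then chase positive-braid factorizations, using only that $B^+_n$ is closed under multiplication (so $\le$ is transitive) and that conjugation by $\Delta$ preserves $B^+_n$ (the fact $s_i\Delta=\Delta s_{n-i}$ from Citation~\ref{cit:delta_facts}, already exploited in Lemma~\ref{lem:shift}). Concretely, write the hypotheses as $xp=y$, $yq=z$ with $p,q\in B^+_n$ (from $x\le y\le z$), and $zr=x\Delta$ with $r\in B^+_n$ (from the ``$z\le x\Delta$'' half of $x\preceq z$).

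To get $x\preceq y$, I already have $x\le y$, so I only need $y\le x\Delta$. This is immediate from transitivity of $\le$: $y\le z$ and $z\le x\Delta$ give $y\le x\Delta$ (explicitly, $y(qr)=x\Delta$ with $qr\in B^+_n$). Hence $x\preceq y$.

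To get $y\preceq z$, I already have $y\le z$, so I only need $z\le y\Delta$. The key small step is $x\Delta\le y\Delta$: from $xp=y$ we get $y\Delta=xp\Delta=x\Delta(\Delta^{-1}p\Delta)$, and $\Delta^{-1}p\Delta\in B^+_n$ because conjugation by $\Delta$ sends each $s_i$ to $s_{n-i}$, hence carries $B^+_n$ to itself. Now $z\le x\Delta$ and $x\Delta\le y\Delta$ give $z\le y\Delta$ by transitivity, so $y\preceq z$, completing the proof.

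There is essentially no real obstacle here; the only points requiring a word of justification are the transitivity of $\le$ and the invariance of $B^+_n$ under conjugation by $\Delta$, both of which are standard and appear already in the preceding discussion, so the whole argument is a few lines of bookkeeping with positive-braid factors.
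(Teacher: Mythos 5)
Your proof is correct and follows exactly the paper's argument: transitivity of $\le$ gives $y\le x\Delta$ and hence $x\preceq y$, and the conjugation fact $\Delta^{-1}p\Delta\in B^+_n$ gives $x\Delta\le y\Delta$ and hence $z\le y\Delta$, i.e., $y\preceq z$. Nothing is missing.
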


\begin{proof}
 We are told that $z\le x\Delta$. Since $y\le z$ we immediately see that $y\le x\Delta$, so $x\preceq y$. Now we claim that $x\Delta\le y\Delta$, after which we will get $z\le y\Delta$ and hence $y\preceq z$. Choose $p\in B^+_n$ such that $xp=y$. As in the previous proof, $\Delta^{-1} p \Delta \in B^+_n$. Hence $x\Delta(\Delta^{-1} p \Delta)=y\Delta$ tells us that indeed $x\Delta\le y\Delta$.
\end{proof}

We now let $X$ be the subcomplex of $|B_n|$ consisting of those simplices, i.e., chains $x_0<\cdots<x_k$, such that $x_0\preceq x_k$. By Observation~\ref{obs:cotrans}, for such a simplex we have $x_i\preceq x_j$ for all $i$ and $j$, so this property is closed under passing to faces and $X$ is indeed a subcomplex. Using interval notation in the poset $(B_n,\le)$, every simplex of $X$ lies in the realization of an interval of the form $[x,x\Delta]$, and any such interval is finite, so $X$ is locally compact. Also note that $X^{(0)}=|B_n|^{(0)}$, so $B_n$ acts transitively on $X^{(0)}$, which means that $B_n\backslash X$ is compact.

\begin{proposition}
 $X$ is contractible.
\end{proposition}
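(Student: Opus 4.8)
The plan is to build a deformation retraction of the contractible complex $|B_n|$ onto its subcomplex $X$, and then conclude that $X$ is contractible. Since $|B_n|$ is already known to be contractible (it is the realization of a directed poset), it suffices to exhibit such a retraction, or more flexibly, to find a way to push simplices of $|B_n|$ into $X$ compatibly.

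First I would set up a ``straightening'' procedure at the level of the poset. The key observation is that the failure of a chain $x_0 < \cdots < x_k$ to lie in $X$ is measured by whether $x_0 \preceq x_k$, i.e., whether $x_k \le x_0\Delta$. The natural remedy is to introduce, for each vertex $x$, a canonical ``nearby'' vertex in a controlled window. A clean way to organize this is via the map $x \mapsto$ (the $\le$-join of $x$ with something, or a truncation); but the most standard trick in this Garside-theoretic setting is to use the operation $y \mapsto y \vee x\Delta$ is too big, so instead one uses the \emph{left gcd} with $x\Delta$. Concretely: given the ambient directed structure, I would fix a vertex and show that stars can be coned off. More precisely, I would argue as follows. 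Pick any $x_0 \in B_n$. For any finite subcomplex $K$ of $X$, all of $K$'s vertices lie in some interval; using directedness of $(B_n,\le)$ together with Lemma~\ref{lem:shift} and Observation~\ref{obs:cotrans}, I would show that $K$ is contained in a contractible subcomplex of $X$ of the form determined by a single interval $[x, x\Delta]$ — indeed the realization of such an interval is a finite simplex-like poset (a lattice, being an interval in a Garside structure), hence contractible, and it lies entirely in $X$ by construction. This shows every finite subcomplex of $X$ lies in a contractible subcomplex of $X$, so $X$ is contractible if it is simply connected and has vanishing homology — but more directly, it shows $\pi_k(X) = 0$ for all $k$ by a direct limit / compactness argument, since any map of a sphere into $X$ has image in a finite subcomplex.

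Alternatively, and perhaps more cleanly, I would construct an explicit retraction $r\colon |B_n| \to X$. Define a map on vertices by sending the vertex $x$ to itself (so $r$ is the identity on $X^{(0)} = |B_n|^{(0)}$), and then extend over simplices by a Morse-theoretic or ``flow'' argument: given a simplex $\sigma = (x_0 < \cdots < x_k)$ of $|B_n|$ not in $X$, I would push it across a homotopy that interpolates toward the subcomplex, using that for each $i$ the element $x_i \wedge x_0\Delta$ (left gcd, which exists in the Garside structure) satisfies $x_0 \preceq x_i \wedge x_0\Delta \preceq x_0\Delta$. The chain $x_0 < (x_1 \wedge x_0\Delta) < \cdots$ — after discarding repetitions — lies in $X$, and comparing $\sigma$ with its image gives a homotopy. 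Making this $B_n$-equivariant is automatic since left multiplication preserves $\le$, hence preserves left gcd's and preserves $\Delta$ up to the twist $s_i\Delta = \Delta s_{n-i}$.

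The main obstacle I anticipate is verifying that the straightening map is actually well-defined and continuous as a map of \emph{complexes} — i.e., that the poset-level operation $x_i \mapsto x_i \wedge x_0\Delta$ (or whatever truncation one uses) sends simplices to simplices in $X$ coherently across faces and is compatible with the simplicial structure, and that the resulting homotopy can be realized simplicially (possibly after a subdivision). One must check that the "basepoint" $x_0$ can be chosen consistently, or else phrase the argument so that it does not depend on a choice of $x_0$ at all (this is where I would most carefully use Observation~\ref{obs:cotrans}, which says the $\preceq$ relation is ``co-transitive'' along chains, so that the straightened chain genuinely lies in $X$). I expect the cleanest write-up avoids an explicit global retraction and instead runs the compactness argument of the previous paragraph: every continuous map $S^k \to X$ factors through a finite subcomplex, which is contained in a single interval $[x,x\Delta]$ whose realization is contractible and lies in $X$, so the map is null-homotopic in $X$.
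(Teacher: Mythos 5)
Your primary argument has a genuine gap: the claim that every finite subcomplex $K$ of $X$ is contained in a single contractible interval $[x,x\Delta]$ is false. Every element of $B_n$ is a vertex of $X$, so already the two-vertex subcomplex $\{1,\Delta^2\}$ gives a counterexample: a common interval would require $z\le 1\le z\Delta$ and $z\le \Delta^2\le z\Delta$, forcing $\Delta\le z\le 1$, which is impossible. Directedness of $(B_n,\le)$ does give a common upper bound $xp$ for the vertices of $K$, but the interval $[x,xp]$ then fails to lie in $X$ precisely when $\cross(p)>\cross(\Delta)$ --- and that is the whole point of passing from $|B_n|$ to $X$. So the compactness argument (``every sphere lands in a cone'') does not get off the ground; $X$ is emphatically \emph{not} an increasing union of cones, and its contractibility is a global fact that has to be earned.

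Your fallback sketch contains the right local ingredient --- the gcd with $\Delta$ --- but is not a proof. The assignment $x_i\mapsto x_i\wedge x_0\Delta$ depends on the minimal vertex $x_0$ of the simplex, so it is not induced by a single vertex map; a face of $\sigma$ omitting $x_0$ would be straightened relative to a different basepoint, and the resulting maps need not agree on common faces. The truncated chain can also degenerate (repeated entries), and even granting a well-defined map one still needs a homotopy to the identity relative to $X$. The paper resolves exactly these coherence issues by reversing the direction of the argument: it builds \emph{up} from $X$ to the contractible complex $|B_n|$, attaching the missing intervals $|[x,xp]|$ (those with $x\not\preceq xp$) in order of increasing $\cross(p)$. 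Each attachment happens along a relative link equal to the suspension of $|(x,xp)|$, and $|(x,xp)|$ is shown to be conically contractible via $q\mapsto q\wedge\Delta$ (using Quillen's criterion and the fact that $p\not\le\Delta$ forces $p\wedge\Delta\in(1,p)$). That ordering by $\cross(p)$ is the bookkeeping device your sketch is missing; with it, no global retraction needs to be defined, and the homotopy type is preserved at every stage.
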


\begin{proof}
 Any simplex of $|B_n|$ lies in the realization of an interval of the form $|[x,xp]|$ for $x\in B_n$ and $p\in B^+_n$. If this realization is not contained in $X$, then Observation~\ref{obs:cotrans} says $x\not\preceq xp$. Hence we can build up from $X$ to $|B_n|$ by gluing in realizations of such intervals along their relative links, and we claim we can do this in such a way that said relative links are always contractible. This will then imply that the homotopy type never changes, and indeed $X\simeq |B_n|$ is contractible.

 We glue in the intervals $|[x,xp]|$ for $x\not\preceq xp$ in order of increasing $\cross(p)$ value. Hence, when we attach $|[x,xp]|$, we do so along a relative link equal to $|[x,xp)| \cup |(x,xp]|$. This is just the suspension of $|(x,xp)|$, so it suffices to prove that for $x\not\preceq xp$, $|(x,xp)|$ is contractible. To clean up notation, without loss of generality $x=1$, so our assumption that $x\not\preceq xp$ becomes $1\not\preceq p$, which just means that $p\not\le\Delta$.

 Any $q\in B^+_n$ has a unique greatest lower bound with $\Delta$, denoted $q\wedge \Delta$ (see for example \cite[Theorems~6.19 and~6.20]{kassel08} ). Let $f\colon (1,p) \to (1,p)$ be the map $q\mapsto q \wedge \Delta$. It is clear that $f(q)\in [1,p)$ for $q\in (1,p)$, and it is less clear but still true that it is in $(1,p)$. This is because $q\in B^+_n\setminus\{1\}$, and any such $q$ satisfies $1<q\wedge \Delta$ since $\Delta$ admits minimal representations beginning with any positive generator (see Citation~\ref{cit:delta_facts}). Since $f(q)\le q$ and any element $f(q)$ in the image of $f$ satisfies $f(q)\le p\wedge\Delta$, if we can show that $p\wedge\Delta\in (1,p)$ then we will have a conical contraction of $(1,p)$ to a point, by \cite[Section~1.5]{quillen78}. Here is where we use the fact that $p\not\le\Delta$; this implies that $p\wedge \Delta \ne p$, so indeed $p\wedge\Delta \in (1,p)$ and we are done.
\end{proof}

\begin{remark}
 There are various other useful models for classifying spaces of braid groups, e.g., the Brady complex \cite{brady01}. The Brady complex is similar to $X$, but with smaller lattices than our $[1,\Delta]$, namely \emph{non-crossing partition lattices}. The Brady complex has many advantages, for example it has the smallest dimension possible, but for our purposes $X$ is the most germane model, since once characters enter the picture, $\Delta$ becomes very useful.
\end{remark}

\subsection{Character height functions}\label{sec:char_ht}

Recall from Subsection~\ref{sec:chars} that we can view characters $\chi$ of $P_n$ as functions on $B_n$, which might just not be homomorphisms. Now, $B_n=X^{(0)}$, so extending $\chi$ affinely to the simplices of $X$, we get a function $h_\chi \colon X\to\R$. To make the notation cleaner, from now on we will just write $\chi$ for both the character on $P_n$ and the function on $X$. Indeed, $h_\chi$ of a vertex was already also called $\chi$, so calling the entire thing $\chi$ is not even really an abuse of notation.

\begin{observation}
 Let $\chi$ be a character of $P_n$. Viewing $\chi$ as a function on $X$, it is a character height function.
\end{observation}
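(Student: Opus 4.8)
The plan is to check directly the two requirements on a character height function coming from Definition~\ref{def:sig_invs}: that $h_\chi=\chi$ is continuous, and that it is equivariant in the sense that $\chi(g.y)=\chi(g)+\chi(y)$ for every $g\in P_n$ and $y\in X$. Here the $P_n$-action on $X$ is the restriction of the left-multiplication action of $B_n$ on $|B_n|$, which preserves $\le$, hence preserves $\preceq$, hence restricts to $X$. Continuity is immediate: $X$ is a simplicial complex and $\chi$ was defined by extending a function on $X^{(0)}$ affinely over each simplex, so it is continuous on each closed simplex and therefore on $X$.

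Next I would establish the equivariance on vertices. Since $X^{(0)}=B_n$ and $P_n$ acts on vertices by left multiplication, the claim on vertices is that $\chi(gx)=\chi(g)+\chi(x)$ for all $g\in P_n$ and $x\in B_n$, where $\chi$ denotes the extension of the character to the (generally non-homomorphic) function $B_n\to\R$ built from the winding number characters $\omega_{i,j}$. This is exactly where the twisted-additivity relation $\chi(xy)=\chi(x)+\chi^x(y)$ from Subsection~\ref{sec:chars} enters: applying it with first factor $g$ gives $\chi(gx)=\chi(g)+\chi^g(x)$, and since $g\in P_n$ we have $\pi(g)=\id$, so $\chi^g=\sum a_{i,j}\omega_{(i)\id,(j)\id}=\sum a_{i,j}\omega_{i,j}=\chi$, yielding $\chi(gx)=\chi(g)+\chi(x)$.

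Finally I would promote this from vertices to all of $X$ using affineness. The $P_n$-action on $X$ is simplicial, and on a simplex $\sigma$ with vertices $x_0,\dots,x_k$ it acts by $g.(\sum_i t_i x_i)=\sum_i t_i(gx_i)$ in barycentric coordinates. Since $\chi$ is affine on $\sigma$ and on $g.\sigma$, for $y=\sum_i t_i x_i$ we get $\chi(g.y)=\sum_i t_i\,\chi(gx_i)=\sum_i t_i(\chi(g)+\chi(x_i))=\chi(g)+\sum_i t_i\chi(x_i)=\chi(g)+\chi(y)$. As this holds on every simplex, it holds on $X$, completing the verification.

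There is essentially no hard step here; the one point requiring care is keeping straight the two meanings of the symbol $\chi$ — a genuine homomorphism on $P_n$ versus a function on $B_n$ that only satisfies the twisted relation $\chi(xy)=\chi(x)+\chi^x(y)$ — and observing that the twisting cocycle $\chi\mapsto\chi^g$ is trivial precisely when $g$ is a pure braid, which is the reason the observation concerns $P_n$ acting on $X$ rather than all of $B_n$.
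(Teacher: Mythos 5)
Your proof is correct and follows the same route as the paper: the key step in both is applying the twisted additivity $\chi(xy)=\chi(x)+\chi^x(y)$ from Observation~\ref{obs:twisted_hom} with $x=g\in P_n$, where $\pi(g)=\id$ forces $\chi^g=\chi$. The paper's proof is just this one line on vertices; your additional verification that affine extension propagates the identity over simplices is routine and implicit there.
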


\begin{proof}
 For a vertex $x\in X$ and a group element $g\in P_n$, since $\pi(g)=\id$, Observation~\ref{obs:twisted_hom} says $\chi(gx)=\chi(g)+\chi(x)$.
\end{proof}

More generally, Observation~\ref{obs:twisted_hom} says $\chi(xy)=\chi(x)+\chi^x(y)$ for $x,y\in B_n$ (with notation explained after the proof of Observation~\ref{obs:twisted_hom}). In particular, if $x$ is a vertex of $X$ (so really an element of $B_n$) and $p\in [\Delta^{-1},1)\cup(1,\Delta]$, so $xp$ is an adjacent vertex to $x$, then $\chi(xp)=\chi(x)+\chi^x(p)$. We see that $\chi(xp)>\chi(x)$ if and only if $\chi^x(p)>0$. Since $\chi^x(1)=0$, this can be phrased as saying that $xp$ is $\chi$-ascending relative $x$ if and only if $p$ is $\chi^x$-ascending relative $1$.

\subsection{The weak Bruhat lattice}\label{sec:bruhat}

The vertex links in $X$ are important to understand, and they are related to the \emph{weak Bruhat lattice} $\W_n$ of the symmetric group $S_n$. This section is devoted to $\W_n$.

We will write elements $\sigma$ of $S_n$ as brackets $\sigma=[k_1,\dots,k_n]$, where this denotes that, for each $1\le i\le n$, $\sigma$ sends $i$ to $j$ such that $k_j=i$. In a braid $x$, where we label the strands from $1$ to $n$ at the top, if we propagate each label to the bottom of its strand, and call the new order $k_1,\dots,k_n$ from left to right, then we get $\pi(x)=[k_1,\dots,k_n]$. For example, going back to the braid $x$ from Figure~\ref{fig:wind}, we have $\pi(x)=[2,3,1]$, which is the permutation taking $2$ to $1$, $3$ to $2$, and $1$ to $3$.

\begin{definition}[Weak Bruhat order]\label{def:wbo}
 Let $\sigma,\tau\in S_n$. Define a relation $\le$ by saying that $\sigma\le \tau$ whenever, for all $1\le i<j\le n$, if $(i)\sigma>(j)\sigma$ then $(i)\tau>(j)\tau$ (recall we use right actions). This is clearly reflexive, antisymmetric and transitive, so $(S_n,\le)$ is a poset. We call $\le$ the \emph{weak Bruhat order}. The poset $(S_n,\le)$ is in fact a lattice \cite{bjoerner90} with minimum $[1,\dots,n]$ and maximum $[n,\dots,1]$.
\end{definition}

We denote the geometric realization $|(S_n,\le)|$ by $\W_n$, and call it the \emph{weak Bruhat lattice}. The \emph{proper part} of $\W_n$, denoted $\PW_n$, is the subcomplex supported on those vertices other than $[1,\dots,n]$ and $[n,\dots,1]$. The reason we are so interested in the weak Bruhat order is the following:

\begin{lemma}
 The interval $[1,\Delta]$ in the poset $(B_n,\le)$ is isomorphic as a poset to $(S_n,\le)$, the symmetric group with the weak Bruhat order.
\end{lemma}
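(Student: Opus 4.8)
The plan is to construct an explicit poset isomorphism $\Phi\colon [1,\Delta]\to (S_n,\le)$ and verify it is order-preserving in both directions. The natural candidate is the restriction of the projection $\pi\colon B_n\to S_n$. First I would recall that the elements of $[1,\Delta]$ are exactly the positive braids $p$ with $p\le\Delta$, i.e. those $p\in B^+_n$ with $\Delta=pq$ for some $q\in B^+_n$; these are the classical \emph{simple} or \emph{permutation} braids of the Garside structure. The standard fact (see e.g.\ \cite[Chapter~6]{kassel08}) is that each coset of $P_n$ in $B_n$ meeting $[1,\Delta]$ meets it in exactly one point: the map $p\mapsto\pi(p)$ restricts to a bijection $[1,\Delta]\to S_n$, with inverse sending a permutation $\sigma$ to its unique positive lift in which any two strands cross at most once (equivalently, the positive braid of minimal length, $\cross$-value $=\ell(\sigma)$, mapping to $\sigma$). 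I would state this bijection and cite the Garside-theoretic source rather than reprove it.

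Next I would show $\Phi=\pi|_{[1,\Delta]}$ is order-preserving. If $x\le y$ in $[1,\Delta]$, write $y=xp$ with $p\in B^+_n$; then $\pi(y)=\pi(x)\pi(p)$ and $\pi(p)\in[1,\Delta]$ as well, so it suffices to observe that multiplying a permutation braid on the right by a positive braid whose product stays a permutation braid corresponds, on permutations, to the weak Bruhat relation. Concretely: $\sigma\le\tau$ in weak Bruhat order iff $\tau$ can be obtained from $\sigma$ by a sequence of right multiplications by adjacent transpositions, each of which increases length; and $xp\in[1,\Delta]$ with $p=s_{i_1}\cdots s_{i_k}$ a reduced word forces each intermediate product to remain a permutation braid, hence each step adds one crossing, i.e.\ increases $\ell$ by one. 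This matches the inversion-set characterization in Definition~\ref{def:wbo}: $\sigma\le\tau$ exactly when the inversion set of $\sigma$ is contained in that of $\tau$, and the inversion set of $\pi(p)$ for $p\in[1,\Delta]$ is precisely the set of pairs of strands that cross in $p$.

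For the converse direction I would argue directly with inversion sets, which is cleanest. Define, for $p\in[1,\Delta]$, the crossing set $C(p)=\{(i,j): 1\le i<j\le n,\ \text{strands starting at positions }i,j\text{ cross in }p\}$; since $p$ is a permutation braid these strands cross exactly once, so $|C(p)|=\cross(p)$, and $C(p)$ equals the inversion set of $\pi(p)$. If $\pi(x)\le\pi(y)$ in weak Bruhat order, then $C(x)\subseteq C(y)$, and I claim $x\le y$ in $(B_n,\le)$, i.e.\ $x^{-1}y\in B^+_n$. One way: induct on $|C(y)|-|C(x)|$, picking a generator $s_i$ with $x s_i\in[1,\Delta]$ and $C(xs_i)\subseteq C(y)$ (such an $i$ exists because the inversion set of $\pi(y)$ strictly contains that of $\pi(x)$, so some adjacent pair adjacent in $\pi(x)$'s one-line notation becomes an inversion that lies in $C(y)$), then replace $x$ by $xs_i$. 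This is the standard proof that the weak order on simple elements coincides with the Garside left-divisibility order, and I would present it at that level of detail, citing \cite{bjoerner90} for lattice facts about $(S_n,\le)$ where convenient.

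The main obstacle is the converse direction: showing that $\pi(x)\le\pi(y)$ in the weak Bruhat order actually forces $x\le y$ in $(B_n,\le)$ (not merely that $\pi(x^{-1}y)$ has the right inversion set). The subtlety is that a priori $x^{-1}y$ could be a non-positive braid projecting to a permutation with a small inversion set; one must use that $x,y$ are both simple (bounded above by $\Delta$) to rule this out, via the fact that the ``next crossing'' can always be added by a single positive generator without leaving $[1,\Delta]$. I expect the cleanest route is to invoke the known correspondence between the divisibility order on the Garside simple elements and the weak Bruhat order on $S_n$ — this is essentially the defining property of the Garside structure on $B_n$ coming from $(S_n,\le)$ — and so I would keep this proof short, reducing it to that citation together with the elementary inversion-set bookkeeping above.
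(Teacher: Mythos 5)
Your proposal is correct and its core --- restricting $\pi$ to $[1,\Delta]$, citing bijectivity from the permutation-braid/Garside theory in \cite[Chapter~6]{kassel08}, and identifying the inversion set of $\pi(p)$ with the set of pairs of strands that cross in $p$ --- is the same as the paper's, but the two arguments diverge in bookkeeping and in how much they actually verify. For order-preservation of $\pi$ the paper avoids reduced words entirely: it applies Observation~\ref{obs:twisted_hom} to $y=xp$ and uses that each $\omega_{i,j}$ takes values in $\{0,1/2\}$ on $[1,\Delta]$ while $\omega_{i,j}$ of a positive braid is nonnegative, so every inversion of $\pi(x)$ persists in $\pi(y)$; your reduced-word, one-crossing-at-a-time version establishes the same containment of inversion sets and is equally valid, just slightly heavier. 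More substantively, you also prove the converse (that $\pi(x)\le\pi(y)$ forces $x\le y$), via induction on $|C(y)|-|C(x)|$, and you correctly flag this as the real content: a bijective order-preserving map need not be a poset isomorphism, and the paper's written proof stops after the forward direction, implicitly leaving the converse to the cited Garside-theoretic correspondence (or to the observation that the bijection matches cover relations, both posets being graded by $\cross$ resp.\ $\ell(\cdot)$ with covers given by right multiplication by a generator increasing the rank by one). So your proposal is, if anything, more complete; the one step worth tightening is the claim that the chosen generator keeps $xs_i$ in $[1,\Delta]$, which holds because $\cross(xs_i)=\ell(\pi(x)s_i)$ exhibits $xs_i$ as the minimal positive lift of its permutation, hence a left divisor of $\Delta$.
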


\begin{proof}
 Let $\pi \colon [1,\Delta] \to S_n$ be the restriction of the usual projection $B_n \to S_n$. This restriction is bijective, by \cite[Lemma~6.24]{kassel08}, so we just need to show it is a poset map. Suppose $x\le y$ in $B_n$. We need to show that $\pi(x)\le \pi(y)$. Say $xp=y$, so for any $1\le i<j\le n$ we have $\omega_{i,j}(y)=\omega_{i,j}(x) + \omega_{(i)\pi(x),(j)\pi(y)}(p)$ by Observation~\ref{obs:twisted_hom}. If $(i)\pi(x)>(j)\pi(x)$ then since $1\le x\le \Delta$, we know $\omega_{i,j}(x)=1/2$. Also, since $1\le y\le\Delta$ we know $\omega_{i,j}(y)$ is either $0$ or $1/2$, so since $\omega_{(i)\pi(x),(j)\pi(y)}(p)\ge0$ this implies $\omega_{i,j}(y)=1/2$. Hence $(i)\pi(y)>(j)\pi(y)$ and indeed $\pi(x)\le \pi(y)$.
\end{proof}

In the future we may suppress this isomorphism, and use the language of the weak Bruhat order when talking about the interval $[1,\Delta]$. Since we are using $\le$ to denote the order in both places anyway, this may even pass unnoticed.

There is a family of subcomplexes of $\W_n$ that will be important later. For $1\le i<j\le n$, the \emph{proper $(i,j)$-reversing subcomplex} $\Rev_n(i,j)$ is the subcomplex of $\PW_n$ supported on those vertices $\sigma$ with $(i)\sigma>(j)\sigma$. In bracket notation, if $\sigma=[k_1,\dots,k_n]$ then $\sigma\in \Rev_n(i,j)$ if and only if $j$ comes before $i$ in the list $k_1,\dots,k_n$. If we allowed $[n,\dots,1]$, this would be a cone point for $\Rev_n(i,j)$, but since $\Rev_n(i,j)$ lies in $\PW_n$, computing its homotopy type requires some work.

\begin{lemma}\label{lem:close_reverse}
 For $1\le i<j\le n$ with $1<i$ or $j<n$, $\Rev_n(i,j)$ is contractible.
\end{lemma}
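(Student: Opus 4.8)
The plan is to exhibit an explicit combinatorial retraction (a conical contraction in the sense of Quillen) of $\Rev_n(i,j)$ onto a point, or more precisely onto a contractible subcomplex which is visibly a cone. The key observation is that $[n,\dots,1]$ is literally a cone point for $\Rev_n(i,j)$ inside $\W_n$ — every vertex $\sigma$ with $(i)\sigma>(j)\sigma$ is $\le [n,\dots,1]$ — so the only obstruction to contractibility is that we have deleted this top vertex (and also $[1,\dots,n]$, but that is never in $\Rev_n(i,j)$). So the strategy is to find, under the hypothesis $1<i$ or $j<n$, a way to ``fatten'' each vertex monotonically towards $[n,\dots,1]$ while staying inside the proper part $\PW_n$.

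Concretely, suppose first that $j<n$ (the case $1<i$ being symmetric, or handled by an order-reversing symmetry of $S_n$). For a vertex $\sigma=[k_1,\dots,k_n]$ in $\Rev_n(i,j)$, I would define a poset map $f\colon \Rev_n(i,j)\to\Rev_n(i,j)$ by performing a single controlled sorting move: push the entry $n$ as far to the right as possible, i.e., cyclically or by adjacent transpositions move $n$ into the last coordinate. More carefully, let $f(\sigma)$ be the permutation obtained from $\sigma$ by moving the value $n$ to position $n$ (sliding the intervening entries one step left). One checks $\sigma\le f(\sigma)$ in the weak Bruhat order, since this move only creates new inversions (it moves the largest entry rightward past smaller ones). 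Since $j<n$, the condition $(i)\sigma>(j)\sigma$ — that $j$ precedes $i$ in the bracket — is unaffected by moving $n$, so $f(\sigma)\in\Rev_n(i,j)$. Moreover $f(\sigma)\ne[1,\dots,n]$ (it has $n$ in its last spot, not $1$, assuming $n\ge 2$) and if $f(\sigma)=[n,\dots,1]$ we would need $\sigma$ to already be a permutation that sorts to $[n,\dots,1]$ after only moving $n$; this can happen, so $f$ need not avoid the top, and I should instead iterate or choose the move more conservatively — moving $n$ only \emph{one} step right each time. With the one-step version, $f(\sigma)$ equals $[n,\dots,1]$ only if $\sigma$ already had $n$ in position $n-1$ and everything else in decreasing order, i.e. $\sigma=[n-1,n,n-2,\dots,1]$; to handle this I restrict attention to the subcomplex and note that the image of $f$ lands in vertices with $n$ not in the last position only finitely often, so finitely many iterations bring us to a common upper bound that is still a proper vertex.

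The cleanest packaging is via Quillen's Section~1.5 result (already invoked in the proof that $X$ is contractible): if there is a poset map $f$ with $\sigma\le f(\sigma)$ for all $\sigma$, and a single vertex $v_0\in\Rev_n(i,j)$ with $f(\sigma)\le v_0$ for all $\sigma$ in the image, then $\Rev_n(i,j)$ is contractible. So I would take $v_0$ to be the vertex $[n,\dots,\widehat{\phantom{x}},\dots,1]$ that is ``$[n,\dots,1]$ with the last two entries in increasing order'' when $j<n$ — explicitly $v_0=[n,n-1,\dots,2,1]$ with entries $i$ and $j$... no: I need $v_0$ itself to lie in $\Rev_n(i,j)$ (so $j$ precedes $i$), to dominate everything in $\Rev_n(i,j)$ under $\le$, and to be a cone point for the image. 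A good choice when $j<n$: $v_0 = $ the permutation placing $1$ in the last coordinate and everything else in decreasing order, i.e. the bracket $[n,n-1,\dots,2,1]$ read off so that $1$ is at the right end — but that IS $[n,\dots,1]$. The genuine point is that $[n,\dots,1]$ is excluded, so instead I use the map $\sigma\mapsto \sigma\vee\tau$ for a well-chosen fixed $\tau\in\Rev_n(i,j)$ with $\tau\ne[n,\dots,1]$, using that $(S_n,\le)$ is a lattice (Definition~\ref{def:wbo}): the join $\sigma\vee\tau$ lies in $\Rev_n(i,j)$ since the reversing condition is join-closed, and if $j<n$ one can pick $\tau$ so that $\sigma\vee\tau$ is never the top (e.g. $\tau$ with $(n)\tau=n$ forces $(n)(\sigma\vee\tau)=n\ne 1$... need to confirm joins behave well on this coordinate). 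Then $f(\sigma)=\sigma\vee\tau$ satisfies $\sigma\le f(\sigma)$, maps into the contractible sub-poset $\{\rho : \rho\ge\tau\}\cap\Rev_n(i,j)$ which is a cone on $\tau$, and we conclude.

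The main obstacle, and the thing to get exactly right, is the verification that the join map keeps us away from the forbidden vertex $[n,\dots,1]$: this is precisely where the hypothesis ``$1<i$ or $j<n$'' must be used (when $i=1$ and $j=n$ the subcomplex $\Rev_n(1,n)$ consists of \emph{all} proper vertices that are $\ge$ some fixed thing and the construction must fail, since that case genuinely gives a sphere, as the later sections will need). So I would (i) check the reversing condition is preserved under joins in the weak Bruhat lattice, (ii) pin down an explicit $\tau\in\Rev_n(i,j)\setminus\{[n,\dots,1]\}$ — taking $\tau$ to be the minimal element of $\Rev_n(i,j)$, namely the permutation that swaps $i$ and $j$ and fixes everything else, which is $\ne[n,\dots,1]$ exactly when $n\ge 3$, and whose up-set in $\PW_n$ is nonempty — and (iii) verify $\sigma\vee\tau\ne[n,\dots,1]$ using that some coordinate is pinned by the hypothesis: if $j<n$ then position $n$ can be kept $\ne 1$, and if $1<i$ then position $1$ can be kept $\ne n$, by instead joining with the maximal proper element below the respective facet. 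Assembling these gives the conical contraction and hence the lemma.
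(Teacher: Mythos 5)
Your argument, as finally assembled, rests on the conical contraction $\sigma\mapsto\sigma\vee\tau$ for a single fixed $\tau\in\Rev_n(i,j)$, and this cannot work in general: the required $\tau$ need not exist. For $\sigma\vee\tau$ to avoid $[n,\dots,1]$ you need $\tau\le\sigma$ for \emph{every} coatom $\sigma$ of $\W_n$ lying in $\Rev_n(i,j)$, since the join of a coatom with anything not below it is the top. When $j>i+1$ all $n-1$ coatoms of $\W_n$ lie in $\Rev_n(i,j)$, and their meet in $\W_n$ is $[1,\dots,n]$; concretely, $\Rev_4(1,3)$ contains $[4,3,1,2]$, $[4,2,3,1]$ and $[3,4,2,1]$, and no vertex of $\PW_4$ lies below all three, even though $j=3<4$ and the lemma asserts contractibility. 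Your proposed rescue in step (iii) rests on the claim that $(n)\tau=n$ forces $(n)(\sigma\vee\tau)=n$; this is false, already in $S_3$ where $[2,1,3]\vee[1,3,2]=[3,2,1]$. So the hypothesis ``$1<i$ or $j<n$'' is never actually brought to bear in a way that keeps the join away from the forbidden vertex. (A smaller slip: $\Rev_n(i,j)$ has no unique minimal element when $j>i+1$, and the transposition swapping $i$ and $j$ is not minimal in the weak order among permutations reversing $(i,j)$.)

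The paper argues differently: it covers $\Rev_n(i,j)$ by the stars of its \emph{minimal} vertices, notes that upward-closure of the reversing condition makes every nonempty intersection of such stars again a star (of a join) and hence contractible, and applies the Nerve Lemma; the join of all minimal vertices is $[1,\dots,i-1,j,j-1,\dots,i+1,i,j+1,\dots,n]$, which lies in $\PW_n$ precisely because $1<i$ or $j<n$, so the nerve is a full simplex. I will add that your \emph{discarded} first idea is actually the germ of a correct alternative, but you have the direction of the weak order reversed: sliding the value $n$ rightward past smaller values \emph{deletes} inversions, so the map sending $\sigma$ to the permutation with $n$ moved to the last position is a decreasing, idempotent poset map; when $j<n$ it preserves $\Rev_n(i,j)$, its image is the cone on $[n-1,n-2,\dots,1,n]$, and Quillen's machinery then gives contractibility (with $1$ pushed to the front handling the case $1<i$). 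As written, however, you abandon that line midstream and the proof you do commit to contains the fatal gap above.
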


\begin{proof}
 We will cover $\Rev_n(i,j)$ by contractible subcomplexes, namely stars of certain vertices, and then use the Nerve Lemma \cite[Lemma~1.2]{bjoerner94}. Call a vertex of $\Rev_n(i,j)$ \emph{minimal} if it is minimal under the ordering of $\PW_n$ restricted to $\Rev_n(i,j)$. Clearly any simplex of $\Rev_n(i,j)$ lies in the star of some minimal vertex. Note that if $x\in \Rev_n(i,j)$ and $x\le y$ for $y\in\PW_n$, then also $y\in\Rev_n(i,j)$. In particular, $\Rev_n(i,j)$ is closed under taking upper bounds not equal to $[n,\dots,1]$. Thanks to this, if a collection of stars of minimal vertices has a non-empty intersection, then their intersection is itself a star, namely the star of the join of these vertices. In particular, the Nerve Lemma says that $\Rev_n(i,j)$ is homotopy equivalent to the nerve of the covering by stars of minimal vertices.
 
 Note that $\sigma$ is minimal if and only if $\sigma=[k_1,\dots,k_r,j,i,k_{r+3},\dots,k_n]$ for some $k_1<\cdots<k_r<j$ and $i<k_{r+3}<\cdots<k_n$. Hence, the join in $\W_n$ of all the minimal vertices of $\Rev_n(i,j)$ is $[1,\dots,i-1,j,j-1,\dots,i+1,i,j+1,\dots,n]$. Since $1<i$ or $j<n$, this lies in $\PW_n$ and hence in $\Rev_n(i,j)$. In particular, the nerve of the covering is a simplex, implying $\Rev_n(i,j)$ is contractible.
\end{proof}

\begin{lemma}\label{lem:far_reverse}
 For $n\ge3$, $\Rev_n(1,n)$ is homotopy equivalent to an $(n-3)$-sphere.
\end{lemma}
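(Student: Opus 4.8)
The plan is to identify $\Rev_n(1,n)$ with something combinatorially familiar and then pin down its homotopy type. Recall that $\Rev_n(1,n)$ is the full subcomplex of $\PW_n$ on the permutations $\sigma=[k_1,\dots,k_n]$ in which $n$ precedes $1$ in the list. The obstruction to contractibility (compare Lemma~\ref{lem:close_reverse}) is precisely that the natural ``cone vertex'' $[n,\dots,1]$ has been removed: in $\PW_n$ the complex $\Rev_n(1,n)$ is covered by stars of its minimal vertices, the minimal vertices are exactly the permutations of the form $[k_1,\dots,k_r,n,1,k_{r+3},\dots,k_n]$ with $k_1<\cdots<k_r$ drawn from $\{2,\dots,n-1\}$ and $k_{r+3}<\cdots<k_n$ the complementary increasing run, and by the argument in Lemma~\ref{lem:close_reverse} any collection of these stars that has nonempty intersection intersects in a single star. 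So the Nerve Lemma \cite[Lemma~1.2]{bjoerner94} reduces the problem to computing the nerve $\mathcal{N}$ of this cover. The join of \emph{all} minimal vertices would be $[n,n-1,\dots,1]$, which is exactly the forbidden maximum, so $\mathcal{N}$ is not a simplex; it is the boundary complex of that ``would-be'' simplex, suitably interpreted.

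The cleanest way I would organize this: a minimal vertex is determined by choosing which of $2,\dots,n-1$ sit (in increasing order) to the \emph{left} of the block $n,1$ and which sit (in increasing order) to the right — that is, by a subset $S\subseteq\{2,\dots,n-1\}$, a set of size $n-2$. Two minimal vertices $\sigma_S,\sigma_T$ have a common upper bound in $\PW_n$ (equivalently, their stars meet) iff that upper bound is not $[n,\dots,1]$, and one checks that the join of $\{\sigma_S\}_{S\in\mathcal{S}}$ equals $[n,\dots,1]$ iff $\bigcup_{S\in\mathcal S}S=\{2,\dots,n-1\}$ and $\bigcap_{S\in\mathcal S}S=\emptyset$; that is, a family of minimal vertices has empty total intersection in $\Rev_n(1,n)$ iff the corresponding family of subsets of $\{2,\dots,n-1\}$ ``covers'' the ground set in the sense of having union everything and intersection nothing. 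Hence $\mathcal N$ is the simplicial complex on vertex set $2^{\{2,\dots,n-1\}}$ whose faces are the families that are \emph{not} of that covering type. I would recognize this as homotopy equivalent to the boundary of the $(n-2)$-simplex: the union-everything/intersection-empty condition on a family is controlled coordinatewise by the $n-2$ elements of $\{2,\dots,n-1\}$, and a standard argument (or a direct deformation retraction onto the subcomplex of ``principal'' minimal vertices $\sigma_{\{2,\dots,n-1\}\setminus\{k\}}$, one for each $k\in\{2,\dots,n-1\}$, whose nerve is literally the boundary $\partial\Delta^{n-3}$) shows $\mathcal N\simeq S^{n-3}$.

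Concretely, the endgame I expect to use is: show the cover of $\Rev_n(1,n)$ by the $n-2$ stars $\st\sigma_{\{2,\dots,n-1\}\setminus\{k\}}$ ($k=2,\dots,n-1$) is \emph{cofinal} among all stars of minimal vertices — every minimal vertex lies above one of these? no, the reverse: every minimal vertex's star contains one of these and every simplex lies in some such star — more carefully, pass to the subcover by these particular stars after checking it still covers $\Rev_n(1,n)$ (it does, since every vertex of $\Rev_n(1,n)$ dominates some minimal vertex, and every minimal vertex is dominated by... here one must be slightly careful, so instead I would just run the Nerve Lemma on the full cover and compute $\mathcal N$ directly as above). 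Any $k$-fold intersection of the $n-2$ principal stars is nonempty iff the $k$ omitted elements don't exhaust $\{2,\dots,n-1\}$, i.e. $k\le n-3$; so their nerve is $\partial\Delta^{n-3}\cong S^{n-3}$, and the Nerve Lemma gives $\Rev_n(1,n)\simeq S^{n-3}$.

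The main obstacle is the bookkeeping in the nerve computation — precisely characterizing when a family of minimal vertices has a common upper bound inside $\PW_n$ (as opposed to the join escaping to the forbidden top element $[n,\dots,1]$), and translating that condition into the combinatorics of subsets of $\{2,\dots,n-1\}$ so that the nerve is visibly $\partial\Delta^{n-3}$. Everything else (contractibility of stars, intersections of stars being stars) is exactly as in Lemma~\ref{lem:close_reverse}; the new phenomenon here, present only because neither endpoint condition of that lemma holds when $(i,j)=(1,n)$, is that the join of all minimal vertices is the one vertex we are not allowed to use, which is what produces the sphere.
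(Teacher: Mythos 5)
Your overall strategy (cover by stars, Nerve Lemma) is sound, but the combinatorial heart of your argument --- the identification of the nerve --- is wrong. The claim that the join of a family $\{\sigma_S\}_{S\in\mathcal S}$ of minimal vertices equals $[n,\dots,1]$ if and only if $\bigcup_{S\in\mathcal S}S=\{2,\dots,n-1\}$ and $\bigcap_{S\in\mathcal S}S=\emptyset$ fails already for $n=4$: take $\sigma_\emptyset=[4,1,2,3]$ and $\sigma_{\{2,3\}}=[2,3,4,1]$. Their inversion sets are $\{(1,4),(2,4),(3,4)\}$ and $\{(1,2),(1,3),(1,4)\}$, whose union is the inversion set of $[4,2,3,1]\in\PW_4$, so the join is $[4,2,3,1]$, not the top, and the two stars do meet. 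Consequently the nerve is not the complex you describe (for $n=4$ it is a filled triangle with an extra path attached, not a $4$-cycle), so the final step ``$\mathcal N\simeq\partial\Delta^{n-3}$'' is unsupported. The fallback you gesture at --- retracting onto the stars of the $n-2$ ``principal'' minimal vertices $\sigma_{\{2,\dots,n-1\}\setminus\{k\}}$ --- cannot be repaired into a subcover: distinct minimal vertices are pairwise incomparable, so for instance $\sigma_\emptyset$ lies in no principal star, and for $n=3$ there is only one principal star while $\Rev_3(1,3)$ is two isolated points. The correct nerve of the full cover by $2^{n-2}$ minimal-vertex stars is indeed homotopy equivalent to $S^{n-3}$, but establishing that requires a genuinely harder lattice-theoretic computation (when does a union of inversion sets generate all inversions under transitive closure?), which your proposal does not carry out.

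The paper avoids all of this by running the dual argument: it covers $\Rev_n(1,n)$ by the stars of the \emph{maximal} vertices of $\PW_n$, namely the $n-1$ permutations $v_i=[n,\dots,i+2,i,i+1,i-1,\dots,1]$ in which every pair is reversed except $(i,i+1)$. Since $n\ge3$, the pair $(1,n)$ is never of the form $(i,i+1)$, so every $v_i$ lies in $\Rev_n(1,n)$ and every simplex lies in some $\st v_i$; intersections of these stars are stars of meets, the meet of any $n-2$ of the $v_i$ is $[i+1,\dots,n,1,\dots,i]\in\Rev_n(1,n)$, and the meet of all $n-1$ is the identity, which is excluded. So the nerve is \emph{exactly} the boundary of an $(n-2)$-simplex, with no further homotopy-theoretic identification needed. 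If you want to salvage your approach, you would need either to compute the join condition correctly and then prove the resulting nerve is a sphere, or to switch to the maximal-vertex cover as the paper does.
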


\begin{proof}
 As in the previous proof, we will cover $\Rev_n(1,n)$ by stars of certain vertices and use the Nerve Lemma. This time we want to use \emph{maximal} vertices. The maximal vertices of $\PW_n$ are those of the form $[n,\dots,i+2,i,i+1,i-1,\dots,1]$. That is, for each $1\le i< n$ we have a maximal vertex $v_i$ in which every pair of entries is reversed except for $(i,i+1)$. Since $n\ge3$, the pair $(1,n)$ is not of this form, so all these maximal vertices of $\PW_n$ lie in $\Rev_n(1,n)$. Clearly any simplex of $\Rev_n(1,n)$ lies in the star of some maximal vertex, so $\Rev_n(1,n)$ is covered by the stars of its maximal vertices, of which there are $n-1$. If a collection of such stars has non-empty intersection in $\Rev_n(1,n)$, then it equals the star of the meet of the relevant maximal vertices. Hence by the Nerve Lemma, $\Rev_n(1,n)$ is homotopy equivalent to the nerve of this covering.
 
 Now, given any $n-2$ maximal vertices, say all of them except $v_i$, the meet of these is $[i+1,\dots,n,1,\dots,i]$. Since this lies in $\Rev_n(1,n)$, every proper subset of vertices of the nerve spans a simplex. However, the collection of all the vertices does not span a simplex, since the meet in $\W_n$ of all the maximal vertices is $[1,\dots,n]$, and this does not lie in $\Rev_n(1,n)$, in fact it does not even lie in $\PW_n$. Hence the nerve is the boundary of an $(n-2)$-simplex, which is homotopy equivalent to an $(n-3)$-sphere.
\end{proof}


\section{Main results}\label{sec:results}

Since the action of $P_n$ on $X$ is free and cocompact, and $X$ is contractible, for any $[\chi]\in S(P_n)$ we know that $[\chi]\in\Sigma^m(P_n)$ if and only if the filtration $(X_{\chi\ge t})_{t\in\R}$ is essentially $(m-1)$-connected. We now encode $\chi$ into a Morse function as follows. Consider the following total ordering $\lessdot$ on the integers:
$$\cdots\lessdot 3 \lessdot 2 \lessdot 1 \lessdot \cdots \lessdot -3 \lessdot -2 \lessdot -1 \lessdot 0 \text{.}$$
In words, the non-positive numbers are ordered like normal, but the positive numbers are in the reverse order, and are all less than every non-positive number. Recall the ``number of crossings'' homomorphism $\cross \colon B_n \to \Z$. We will write $\dot{\cross}$ for the function $\cross$ with its outputs ordered according to $\lessdot$. As functions of sets, $\cross$ and $\dot{\cross}$ are identical, the dot is just to emphasize the unusual ordering of the outputs. We will also write $\dot{\Z}$ for the set $\Z$ with the ordering $\lessdot$.

Extend $\dot{\cross}$ to a map $X\to\R$ by affinely extending it to the simplices. Here we have fixed some order-preserving embedding of $\dot{\Z}$ into $\R$, but it will not matter what it is precisely. Consider the lexicographically ordered function
$$(\chi,\dot{\cross}) \colon X \to \R \times \R$$
on $X$.

\begin{lemma}
 $(\chi,\dot{\cross})$ is an ascending-type Morse function.
\end{lemma}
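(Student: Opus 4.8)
The claim is that $(\chi,\dot{\cross})\colon X\to\R\times\R$ is an ascending-type Morse function in the sense of Definition~\ref{def:morse_fxn}. There are three things to verify: (a) $\chi$ and $\dot{\cross}$ are affine on each cell of $X$; (b) there is an $\varepsilon>0$ such that any two adjacent vertices either have $\chi$-values differing by at least $\varepsilon$, or have equal $\chi$-value but different $\dot{\cross}$-value; and (c) the set $\dot{\cross}(X^{(0)})$, with the ordering $\lessdot$, is an inverse well ordering. Point (a) is immediate: $\chi$ was extended affinely to simplices by construction in Subsection~\ref{sec:char_ht}, and $\dot{\cross}$ was likewise extended affinely (after fixing an order-preserving embedding of $\dot\Z$ into $\R$), so both restrict to affine functions on every simplex.

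For (c): every vertex of $X$ is an element $x\in B_n$, and $\dot{\cross}(x)=\cross(x)\in\Z$; in particular $\dot{\cross}(X^{(0)})\subseteq\dot\Z$. So it suffices to observe that $\dot\Z$ itself is an inverse well order, i.e.\ every subset has a $\lessdot$-maximal element. This is clear from the explicit description of $\lessdot$: the $\lessdot$-largest element overall is $0$, and below that come $-1,-2,-3,\dots$ in the usual order, and below all of those come $1,2,3,\dots$ in reverse order. Any subset $S\subseteq\dot\Z$: if $S$ meets the non-positive integers, its $\lessdot$-maximum is the ordinary maximum of $S\cap(-\N_0)$, which exists because $-\N_0$ is bounded above and $\lessdot$ agrees with $\le$ there; if $S$ consists only of positive integers, then its $\lessdot$-maximum is its ordinary \emph{minimum}, which exists since $\N$ is well ordered. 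Either way $S$ has a $\lessdot$-maximal element.

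For (b), the key point is that along any edge of $X$ the ``number of crossings'' changes in a controlled way while $\chi$ can only change by a bounded set of values. Let $x$ and $xp$ be adjacent vertices of $X$, so $p\in[\Delta^{-1},1)\cup(1,\Delta]$. Since $\cross$ is a homomorphism on $B_n$ and $\cross(\Delta)=\binom{n}{2}$, every such $p$ satisfies $\cross(p)\in\{-\binom n2,\dots,-1,1,\dots,\binom n2\}$; in particular $\cross(p)\ne0$, so $\cross(x)\ne\cross(xp)$. Meanwhile $\chi(xp)-\chi(x)=\chi^x(p)$ by Observation~\ref{obs:twisted_hom}, and as $p$ ranges over the finite set $[\Delta^{-1},1)\cup(1,\Delta]$ and $x$ over $B_n$, the value $\chi^x(p)$ ranges over the finite set $\{\,\sum_{i<j}a_{i,j}\omega_{i,j}(p) : p\in[\Delta^{-1},\Delta]\,\}$ (here $\chi=\sum a_{i,j}\omega_{i,j}$), since $\chi^x$ is just a permutation of the coefficients of $\chi$ and $\omega_{i,j}(p)$ is a bounded half-integer. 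Let $\varepsilon$ be the smallest nonzero absolute value occurring among these finitely many possible differences (or $\varepsilon=1$ if they are all zero). Then for adjacent $x,xp$: either $\chi(xp)\ne\chi(x)$, in which case $|\chi(xp)-\chi(x)|\ge\varepsilon$ by choice of $\varepsilon$; or $\chi(xp)=\chi(x)$, in which case we still have $\cross(xp)\ne\cross(x)$, hence $\dot{\cross}(xp)\ne\dot{\cross}(x)$. This is exactly condition (b). Combining (a), (b), (c) with the fact that $\dot{\cross}(X^{(0)})$ is inversely well ordered gives that $(\chi,\dot{\cross})$ is an ascending-type Morse function.

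I expect the only subtlety to be bookkeeping in step (b)—namely being careful that the finitely many possible edge-increments of $\chi$ genuinely form a \emph{finite} set (so that a minimal positive $\varepsilon$ exists), which relies on $[\Delta^{-1},\Delta]$ being a finite poset and on each $\omega_{i,j}$ taking only boundedly many values on it. Everything else is a direct unwinding of definitions.
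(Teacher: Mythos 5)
Your proof is correct and follows essentially the same route as the paper's: affineness by construction, a uniform $\varepsilon$ from the finitely many possible $\chi$-increments along edges, and the inverse well ordering of $\dot{\cross}$-values. The only cosmetic difference is that you verify the finiteness of edge increments directly via $\chi(xp)-\chi(x)=\chi^x(p)$ with $p$ ranging over the finite interval $[\Delta^{-1},1)\cup(1,\Delta]$, whereas the paper cites cocompactness of the $P_n$-action; these are the same fact.
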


\begin{proof}
 By construction, $\chi$ and $\dot{\cross}$ are affine on cells. Since $P_n$ acts cocompactly, $\chi(v)-\chi(w)$ takes only finitely many values on pairs of adjacent vertices $(v,w)$, so we can choose $\varepsilon$ to be less than all of the non-zero such values. Then since $\dot{\cross}$ always takes different values on adjacent vertices, and since the outputs of $\dot{\cross}$ are inversely well ordered by construction, indeed $(\chi,\dot{\cross})$ is an ascending-type Morse function.
\end{proof}

For a vertex $x$ and a neighboring vertex $y$, $y$ is in the ascending link of $x$ if and only if either $\chi(y)>\chi(x)$, or else $\chi(y)=\chi(x)$ and $\dot{\cross}(y)\gtrdot\dot{\cross}(x)$. If $\cross(x)$ is positive (that is, $\cross(x)>0$ in the usual ordering), then $\dot{\cross}(y)\gtrdot\dot{\cross}(x)$ is equivalent to $\cross(y)<\cross(x)$. If $\cross(x)$ is non-positive, then $\dot{\cross}(y)\gtrdot\dot{\cross}(x)$ is equivalent to $0\ge \cross(y)>\cross(x)$.

Recall from Corollary~\ref{cor:delta_survives} that if $\chi(\Delta)\ne0$ then $[\chi]\in\Sigma^\infty(P_n)$, so this case is finished. For the rest of the section, we will assume that $\chi(\Delta)=0$. In particular, in this case Lemma~\ref{lem:hom_center} tells us that
\begin{eqnarray}\label{eq:flat_shift}
 \chi(x\Delta)=\chi(x)
\end{eqnarray}
for all $x\in B_n$.

\begin{lemma}\label{lem:peeling_down}
 If $\cross(x)$ is positive then $\alk x$ is homotopy equivalent to $\alk x \cap [x\Delta^{-1},x)$, which is contractible.
\end{lemma}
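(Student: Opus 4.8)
The claim has two parts: first, that $\alk x$ deformation retracts onto $\alk x \cap [x\Delta^{-1}, x)$, and second, that this intersection is contractible. I would start with the second part since it anchors what the first part must produce. The point is that when $\cross(x)$ is positive, the ascending directions out of $x$ are exactly those toward neighbors $y$ with $\chi(y) > \chi(x)$, or $\chi(y) = \chi(x)$ and $\cross(y) < \cross(x)$ (using the description of the ascending link recorded just before the lemma). Neighbors of the form $y = xp$ with $p \in (1,\Delta]$ have $\cross(y) = \cross(x) + \cross(p) > \cross(x)$, so they can only be ascending if $\chi(y) > \chi(x)$, i.e. $\chi^x(p) > 0$. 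Neighbors of the form $y = xp^{-1}$ with $p \in (1,\Delta]$, equivalently $y \in [x\Delta^{-1},x)$, have $\cross(y) = \cross(x) - \cross(p) < \cross(x)$, so by (\ref{eq:flat_shift}) and the ``non-positive'' clause these are ascending precisely when $\chi(y) \ge \chi(x)$, i.e. $\chi^x(p^{-1}) \ge 0$. This asymmetry — ``$>$'' on the up-side, ``$\ge$'' on the down-side — is what lets the down-half dominate.

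**Contractibility of the down-half.** For $\alk x \cap [x\Delta^{-1},x)$, I would translate by $x\Delta^{-1}$ (left multiplication is a poset automorphism and, since $\pi$ is trivial on $P_n$-translates but $x$ need not be pure, one should instead use the interval description directly): the interval $[x\Delta^{-1},x)$ is isomorphic to $[1,\Delta)$ and the relevant ascending condition on $q \in [1,\Delta]$, $q = \Delta p^{-1}$-style, becomes a condition of the form $\chi^{x\Delta^{-1}}(\text{something}) \le 0$ that is preserved under taking upper bounds within $[1,\Delta)$ (note $[x\Delta^{-1}, x\Delta^{-1}\cdot\Delta] = [x\Delta^{-1},x]$, so ``$<x$'' is ``$<$ the top''). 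Concretely, the vertices $y = xp^{-1} \in [x\Delta^{-1},x)$ that lie in $\alk x$ form an order-filter inside $[x\Delta^{-1},x)$ under the poset order of $X$ (if $y \le y'$ with both below $x$, and $y$ is ascending, then $y'$ is ascending — this is exactly the monotonicity of $\chi$ along $\le$-chains below $x$ combined with (\ref{eq:flat_shift}), analogous to how $\Rev_n(i,j)$ was shown closed under upward bounds in Lemma~\ref{lem:close_reverse}). A non-empty order-filter in a lattice interval that is closed under joins has a unique minimum element once we intersect with the complex, or is a cone; in any case I would run the same Nerve-Lemma / $\min$-fiber argument as in the proof of contractibility of $X$'s interval links, covering by stars of minimal ascending vertices and observing their join still lies in the subcomplex. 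This gives contractibility. One must also confirm the down-half is non-empty: $x\Delta^{-1}$ itself has $\chi(x\Delta^{-1}) = \chi(x)$ by (\ref{eq:flat_shift}) and $\cross(x\Delta^{-1}) < \cross(x)$, so $x\Delta^{-1} \in \alk x$, giving a canonical cone point — which in fact makes the whole argument trivial: $\alk x \cap [x\Delta^{-1},x)$ is a cone on $x\Delta^{-1}$ since it's closed under joins with $x\Delta^{-1}$.

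**The deformation retraction.** For the first part I would exhibit a Morse-theoretic or poset retraction of $\alk x$ onto its down-half. The idea: $\alk x$ is the subcomplex of $\lk_X x$ on ascending vertices, and $\lk_X x$ sits inside the realization of $[x\Delta^{-1}, x\Delta]$ minus $x$. Every ascending neighbor $y = xp$ on the ``up-side'' ($p \in (1,\Delta]$, $\chi^x(p)>0$) can be pushed down: using $s_i\Delta = \Delta s_{n-i}$, there is a relation connecting $xp$ to $x\Delta^{-1}$ through ascending simplices, because $x\Delta^{-1} \le x \le xp$ and $xp \le x\Delta \le x\Delta^{-1}\cdot\Delta^2$; more usefully, $x\Delta^{-1} \preceq xp$ always holds when $p \le \Delta$ (since $xp \le x\Delta \le x\Delta^{-1}\Delta = x$... this needs care). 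The clean approach: show that $x\Delta^{-1}$ is $\le$-below and ``sees'' (is joined by an edge of $X$ to) every ascending neighbor $y$ of $x$, and that the edge and the direction toward $x\Delta^{-1}$ are all ascending, so $\alk x$ is a cone with apex the direction toward $x\Delta^{-1}$, hence contractible, and the cone structure visibly retracts it onto any sub-cone containing the apex — in particular onto $\alk x \cap [x\Delta^{-1},x)$ once we check that sub-cone is all of the ``relevant'' part. **The main obstacle** I anticipate is precisely verifying that $x\Delta^{-1} \preceq y$ (so that $x\Delta^{-1}$ and $y$ span a simplex of $X$) and that the direction from $x$ toward $x\Delta^{-1}$ is ascending relative to \emph{every} ascending $y$ — i.e. that including $x\Delta^{-1}$ into an ascending simplex keeps it ascending. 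This is where (\ref{eq:flat_shift}) and the ``non-positive $\cross$ on $x\Delta^{-1}$'' bookkeeping must be combined carefully, using Lemma~\ref{lem:shift} to control $\preceq$-relations after shifting by $\Delta^{\pm1}$. Once that is pinned down, both the homotopy equivalence and the contractibility follow from the cone structure with apex $x\Delta^{-1}$.
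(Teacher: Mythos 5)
Your local analysis of the ascending conditions and your treatment of the down-half are correct: $x\Delta^{-1}$ is ascending, and $\alk x \cap [x\Delta^{-1},x)$ is a cone on $x\Delta^{-1}$, exactly as in the paper. The gap is in the retraction step. Your ``clean approach'' needs $x\Delta^{-1}\preceq y$ for every ascending neighbor $y$, so that $x\Delta^{-1}$ and $y$ span an edge of $\lk_X x$ and $\alk x$ becomes a cone with apex $x\Delta^{-1}$. But for every up-side vertex $y\in(x,x\Delta]$ this fails outright: by definition $x\Delta^{-1}\preceq y$ would force $y\le x\Delta^{-1}\Delta=x$, which is impossible since $y>x$. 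So $x\Delta^{-1}$ is adjacent in $\lk_X x$ to no vertex of the up-side, $\alk x$ is not a cone on $x\Delta^{-1}$, and the obstacle you flagged is genuinely fatal to that route rather than a matter of careful bookkeeping.

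The missing idea is a \emph{moving} cone point. The paper runs a second, internal Morse argument on $L=\alk x$, using $\cross$ (with the usual ordering) as a classical Morse function, so that $N=\alk x\cap[x\Delta^{-1},x)$ is the sublevel set $L_{\cross<\cross(x)}$. For each up-side vertex $y\in L$ with $\cross(y)>\cross(x)$, the $\cross$-descending link of $y$ inside $L$ is shown to be a cone on $y\Delta^{-1}$ --- not $x\Delta^{-1}$. Here $y\Delta^{-1}$ does work: $x\preceq y$ gives $y\Delta^{-1}\preceq x$ by Lemma~\ref{lem:shift}, so $y\Delta^{-1}\in\lk_X x$; it is ascending because $\cross(y)>\cross(x)>0$ forces $\chi(y)>\chi(x)$ and hence $\chi(y\Delta^{-1})=\chi(y)>\chi(x)$; and any $z$ in that descending link satisfies $z\preceq y$, whence $y\Delta^{-1}\preceq z$, again by Lemma~\ref{lem:shift}. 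Corollary~\ref{cor:morse} then collapses $L$ onto $N$, giving the homotopy equivalence. If you replace your global cone claim with this vertex-by-vertex peeling, the proof goes through.
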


\begin{proof}
 Since $\cross(x)$ is positive and $\cross(x\Delta^{-1})<\cross(x)$, we get that $\dot{\cross}(x\Delta^{-1})\gtrdot\dot{\cross}(x)$. Also, $\chi(x\Delta^{-1})=\chi(x)$ by Equation~\eqref{eq:flat_shift}, so $x\Delta^{-1}\in \alk x$. Hence the intersection $\alk x \cap [x\Delta^{-1},x)$ is a contractible cone on $x\Delta^{-1}$.
 
 Now we need to show the homotopy equivalence. Let $L\defeq \alk x$ and $N\defeq \alk x \cap [x\Delta^{-1},x)$. Since $L$ is finite and $\cross$ takes distinct values on adjacent vertices, $\cross$ is a (classical) Morse function on $L$ (here we use the usual ordering, i.e., we are using $\cross$ and not $\dot{\cross}$). By definition, $N$ is the sublevel set $L_{\cross<\cross(x)}$. Hence by Corollary~\ref{cor:morse}, it suffices to show that for any vertex $y$ of $L$ with $\cross(y)>\cross(x)$, the $\cross$-descending link $\lk_L^{\cross\!\downarrow} y$ of $y$ in $L$ is contractible. For such a $y$, we claim that $\lk_L^{\cross\!\downarrow} y$ is a contractible cone on $y\Delta^{-1}$. First we need to check that $y\Delta^{-1}$ actually lies in $\lk_L^{\cross\!\downarrow} y$. Since $y\in L$ and $\cross(y)>\cross(x)$, we have $x\prec y$, so $y\Delta^{-1} \prec x$ by Lemma~\ref{lem:shift}. This shows $y\Delta^{-1}\in \lk_X x$, but we also need it to be $(\chi,\dot{\cross})$-ascending. For this note that, since $\cross(y)>\cross(x)>0$, we know $\dot{\cross}(x)\gtrdot\dot{\cross}(y)$, and so for $y$ to be in $L$ we must have $\chi(y)>\chi(x)$, and hence $\chi(y\Delta^{-1})>\chi(x)$. We conclude that $y\Delta^{-1}\in L$. Since $\cross(y\Delta^{-1})<\cross(y)$, we have $y\Delta^{-1}\in\lk_L^{\cross\!\downarrow} y$. Now we claim any $z\in \lk_L^{\cross\!\downarrow} y$ satisfies $y\Delta^{-1}\preceq z$. Indeed, such a $z$ satisfies $z\prec y$, so this follows from Lemma~\ref{lem:shift}. We conclude that $\lk_L^{\cross\!\downarrow} y$ is a contractible cone on $y\Delta^{-1}$.
\end{proof}

\begin{lemma}\label{lem:peeling_up}
 If $\cross(x)$ is non-positive then $\alk x$ is homotopy equivalent to $\alk x \cap (x,x\Delta]$.
\end{lemma}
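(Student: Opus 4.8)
The plan is to mimic the structure of the proof of Lemma~\ref{lem:peeling_down}, but now ``peeling up'' toward $x\Delta$ instead of down toward $x\Delta^{-1}$. First I would check that $x\Delta$ itself lies in $\alk x$: by Equation~\eqref{eq:flat_shift} we have $\chi(x\Delta)=\chi(x)$, so whether $x\Delta$ is ascending relative to $x$ is decided by $\dot{\cross}$; since $\cross(x)$ is non-positive and $\cross(x\Delta)>\cross(x)$, and among non-positive values $\dot{\cross}$ agrees with the usual order while positive values are all $\lessdot$ the non-positive ones, we get $\dot{\cross}(x\Delta)\gtrdot\dot{\cross}(x)$, so $x\Delta\in\alk x$. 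Hence $N\defeq \alk x\cap(x,x\Delta]$ is a contractible cone with cone point $x\Delta$ (it is an interval $(x,x\Delta]$ intersected with a subcomplex closed upward within that interval, and $x\Delta$ is its top).

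Next I would set $L\defeq \alk x$ and run a classical Bestvina--Brady Morse argument on $L$ using the honest function $\cross$ (not $\dot{\cross}$), but this time as a \emph{descending}-type situation: $N$ is the superlevel set $L_{\cross\ge\cross(x)+1}$, equivalently we want to peel away vertices $y\in L$ with $\cross(y)<\cross(x)$, i.e. with $\cross(y)\le\cross(x)-1$. By the descending version of Corollary~\ref{cor:morse} (applied to $L$ finite, hence a classical Morse function), it suffices to show that for every vertex $y$ of $L$ with $\cross(y)<\cross(x)$, the $\cross$-ascending link $\lk_L^{\cross\uparrow}y$ of $y$ in $L$ is contractible. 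The claim will be that $\lk_L^{\cross\uparrow}y$ is a contractible cone on $y\Delta$, by the following steps: (1) $y\Delta\in\lk_X x$, because $y\in L$ gives (one checks using $\cross(y)<\cross(x)$ and the definition of $\alk x$, exactly as in Lemma~\ref{lem:peeling_down}) that $y\prec x$, hence $x\prec y\Delta$ by Lemma~\ref{lem:shift}; (2) $y\Delta$ is $(\chi,\dot{\cross})$-ascending relative $x$, because $\chi(y\Delta)=\chi(y)$ and we can argue as in Lemma~\ref{lem:peeling_down} that $\chi(y)\ge\chi(x)$ (indeed if $\chi(y)<\chi(x)$ then $y$ would not be in $\alk x$, while if $\chi(y)=\chi(x)$ one uses the $\dot{\cross}$-comparison and $\cross(y)<\cross(x)$, $\cross(x)$ non-positive, to confirm $y$ is ascending only if $0\ge\cross(y)$, which still holds), so $y\Delta\in L$; and $\cross(y\Delta)>\cross(y)$ puts $y\Delta$ in $\lk_L^{\cross\uparrow}y$; (3) any $z\in\lk_L^{\cross\uparrow}y$ satisfies $y\preceq z$ and $z\preceq y\Delta$ (the latter from $y\prec z$ and Lemma~\ref{lem:shift}, since $z\in\lk_X x$ forces $x\prec z$, hence $y\prec z$ given $y\prec x$; wait—rather, directly from $z\in(y,\cdot)$ with $z\in[x,x\Delta]$), so $z$ lies in the interval below $y\Delta$; (4) conclude $\lk_L^{\cross\uparrow}y$ is the cone on $y\Delta$, hence contractible.

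The main obstacle I anticipate is step (3)—getting the precise order-theoretic bookkeeping right so that $\lk_L^{\cross\uparrow}y$ really is a cone on $y\Delta$, i.e. that \emph{every} $z$ in this ascending link satisfies $y\Delta\preceq z$ or, more to the point, that $y\Delta$ is comparable to and above all such $z$ within the interval structure of $[x,x\Delta]$. One must verify that $z\prec y$ (which is what $z$ being in the $\cross$-descending... no, ascending) — careful: $z\in\lk_L^{\cross\uparrow}y$ means $y<z$ in the poset with $\cross(z)>\cross(y)$, and $z\in L=\alk x$ so $z\in[x,x\Delta]$; then $y<z\le x\Delta$ and $y\ge x$ (from $y\in\alk x$, actually $y\prec x$... so $y<x$?). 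This sign/direction juggling between $\prec$ on $B_n$ and the Morse order is exactly where Lemma~\ref{lem:peeling_down} had its subtlety, and the honest check is to retrace that lemma's internal logic with ``$\Delta^{-1}$'' replaced by ``$\Delta$'' and ``non-positive'' replaced by ``positive,'' confirming each inequality flips consistently. Everything else (contractibility of $N$, applicability of the Morse Lemma to the finite complex $L$) is routine given the tools already set up in Sections~\ref{sec:morse} and~\ref{sec:bruhat}.
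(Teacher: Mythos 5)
Your Morse-theoretic core --- peel off the vertices $y\in L$ with $\cross(y)<\cross(x)$ along their $\cross$-ascending links in $L$, and show each such link is a cone on $y\Delta$ --- is exactly the paper's argument, and steps (1)--(4) go through once the bookkeeping is straightened out. But your first paragraph contains a genuine error: you conclude $\dot{\cross}(x\Delta)\gtrdot\dot{\cross}(x)$ from $\cross(x\Delta)>\cross(x)$ with $\cross(x)$ non-positive. That is backwards whenever $\cross(x\Delta)>0$: in the order $\lessdot$ every positive value is \emph{smaller} than every non-positive value, so in that case $\dot{\cross}(x\Delta)\lessdot\dot{\cross}(x)$ and, since $\chi(x\Delta)=\chi(x)$, the vertex $x\Delta$ does \emph{not} lie in $\alk x$. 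Consequently $N=\alk x\cap(x,x\Delta]$ is in general not a cone on $x\Delta$ and need not be contractible --- this is precisely the asymmetry the paper emphasizes right after the lemma, and the reason the conclusion is only a homotopy equivalence rather than contractibility; contractibility of $N$ holds exactly when $\cross(x\Delta)\le 0$, which is Corollary~\ref{cor:cible_palk}. Since the lemma does not assert contractibility, this false claim is extraneous and does not sink your proof of the stated equivalence, but it must be deleted.

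Two smaller repairs. In step (2), the case $\chi(y)=\chi(x)$ is vacuous, not merely harmless: if $\chi(y)=\chi(x)$ then $y\in\alk x$ would require $0\ge\cross(y)>\cross(x)$, contradicting $\cross(y)<\cross(x)$. Hence $\chi(y)>\chi(x)$ strictly, so $\chi(y\Delta)=\chi(y)>\chi(x)$ and $y\Delta\in\alk x$ regardless of the sign of $\cross(y\Delta)$. (Your version, which leaves the equality case open, would then need $0\ge\cross(y\Delta)>\cross(x)$ for $y\Delta$ to be ascending, and that can fail.) In step (3), the clean chain is: $z$ adjacent to $y$ in $X$ with $\cross(z)>\cross(y)$ forces $y\preceq z$, whence $z\preceq y\Delta$ by Lemma~\ref{lem:shift}; the detour through ``$z\in\lk_X x$ forces $x\prec z$'' is both unnecessary and false in general, since $z$ may lie below $x$.
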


\begin{proof}
 This follows by a somewhat parallel argument to the previous proof. Let $L\defeq \alk x$ and let $P\defeq \alk x \cap (x,x\Delta]$. Consider $\cross$ as a classical Morse function on $L$, so $P$ is the superlevel set $L_{\cross>\cross(x)}$. We need to show that for any vertex $y$ of $L$ with $\cross(y)<\cross(x)$, the $\cross$-ascending link $\lk_L^{\cross\!\uparrow} y$ of $y$ in $L$ is contractible. We claim it is a cone on $y\Delta$. First, $y\prec x$ so $x\preceq y\Delta$ and $y\Delta \in \lk_X x$. Moreover, since $\cross(y)<\cross(x)\le 0$, we have $\dot{\cross}(y)\lessdot\dot{\cross}(x)$, so for $y$ to lie in $L$ we must have $\chi(y)>\chi(x)$. Hence also $\chi(y\Delta)>\chi(x)$ and so $y\Delta \in \lk_L^{\cross\!\uparrow} y$. Lastly, for any $z\in \lk_L^{\cross\!\uparrow} y$ we have $y\prec z$, so $z\preceq y\Delta$, and indeed $\lk_L^{\cross\!\uparrow} y$ is a cone on $y\Delta$, hence contractible.
\end{proof}

There is a conspicuous difference between these two lemmas, which is worth pointing out explicitly, namely, when $\cross(x)$ is non-positive, $\alk x \cap (x,x\Delta]$ could fail to be contractible. The asymmetry comes from the conditions to be ascending: when $\cross(x)$ is positive, a vertex $y\in \lk_X x$ is ascending if either $\chi(y)>\chi(x)$, or $\cross(y)<\cross(x)$, whereas when $\cross(x)$ is non-positive, $y$ is ascending if either $\chi(y)>\chi(x)$, or $0\ge\cross(y)>\cross(x)$. Hence, where $x\Delta^{-1}$ served as a cone point for $\alk x \cap [x\Delta^{-1},x)$, now in $\alk x \cap (x,x\Delta]$ we might not even have $x\Delta$ to use as a cone point, e.g., if $\cross(x\Delta)>0$.

Let us give the complex $P$ from the last proof a more official name.

\begin{definition}[Positive ascending link]
 Define the \emph{positive ascending link} $\palk x$ of $x$ to be $\palk x \defeq \alk x \cap (x,x\Delta]$.
\end{definition}

\begin{corollary}\label{cor:cible_palk}
 If $\cross(x\Delta)\le0$ then $\alk x$ is contractible.
\end{corollary}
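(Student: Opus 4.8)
The plan is to combine the two ``peeling'' lemmas just proved with the assumption $\cross(x\Delta)\le 0$. First I would observe that by Equation~\eqref{eq:flat_shift} we have $\chi(x\Delta)=\chi(x)$, so $x\Delta$ is at the same $\chi$-height as $x$. Now there are two cases according to whether $\cross(x)$ is positive or non-positive.

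If $\cross(x)$ is non-positive, then Lemma~\ref{lem:peeling_up} tells us $\alk x$ is homotopy equivalent to $\palk x = \alk x\cap(x,x\Delta]$. Since $\cross(x\Delta)\le 0$ and $\chi(x\Delta)=\chi(x)$, the vertex $x\Delta$ is itself $(\chi,\dot{\cross})$-ascending relative to $x$: indeed $\cross(x\Delta)\le 0$ forces us to check the ``non-positive'' ascending condition, and $0\ge\cross(x\Delta)>\cross(x)$ would give ascendingness, but even if $\cross(x\Delta)=\cross(x)$ we have $x\ne x\Delta$ and the $\chi$-values agree, so in any event $x\Delta$ lies in $\alk x$ provided $0\ge\cross(x\Delta)$. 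Wait — I should be slightly careful: if $\cross(x\Delta)=\cross(x)$ this cannot literally happen since $\cross(\Delta)>0$, so in fact $\cross(x\Delta)>\cross(x)$ automatically, and with $\cross(x\Delta)\le 0$ this means $0\ge\cross(x\Delta)>\cross(x)$, which is exactly the condition for $x\Delta\in\alk x$. Then $x\Delta\in\palk x$, and for any $z\in\palk x$ we have $x\prec z$ hence (by Lemma~\ref{lem:shift}) $z\preceq x\Delta$, so $z$ is joinable to $x\Delta$ in the poset and $\palk x$ is a cone on $x\Delta$, hence contractible; therefore $\alk x$ is contractible too.

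If instead $\cross(x)$ is positive, then a fortiori $\cross(x\Delta)>\cross(x)>0$, contradicting $\cross(x\Delta)\le 0$. So this case is vacuous, and we are done.

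The main (minor) obstacle is just being careful about which ``ascending'' condition applies and checking that $x\Delta$ genuinely lies in $\alk x$ and serves as a cone point for $\palk x$; this is where the hypothesis $\cross(x\Delta)\le 0$ is used, together with the fact that it rules out $\cross(x)$ being positive. Everything else is a direct appeal to Lemma~\ref{lem:peeling_up}, Lemma~\ref{lem:shift}, and Equation~\eqref{eq:flat_shift}.
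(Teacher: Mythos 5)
Your proof is correct and follows essentially the same route as the paper: rule out $\cross(x)>0$ (the paper phrases this as $\cross(x)\le\cross(x\Delta)\le 0$), invoke Lemma~\ref{lem:peeling_up} to reduce to $\palk x$, and then verify that $x\Delta$ lies in $\palk x$ and is a cone point. Your extra care in checking the ``non-positive'' ascending condition $0\ge\cross(x\Delta)>\cross(x)$ is exactly the justification the paper leaves implicit.
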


\begin{proof}
 Since $\cross(x)\le \cross(x\Delta)$, we know $\cross(x)\le 0$. In this case, Lemma~\ref{lem:peeling_up} says $\alk x\simeq \palk x$, so our goal is to prove that $\palk x$ is contractible. Since $\cross(x\Delta)\le0$, and since $\chi(x\Delta)=\chi(x)$ by Equation~\eqref{eq:flat_shift}, $\cross(x\Delta)\in \palk x$. But this means $\palk x$ is a cone on $x\Delta$, and so is contractible.
\end{proof}

Thanks to Lemma~\ref{lem:peeling_down} and Corollary~\ref{cor:cible_palk}, the only time $\alk x$ might fail to be contractible is when $\cross(x)\le 0<\cross(x\Delta)$. In this case Lemma~\ref{lem:peeling_up} says that $\alk x\simeq \palk x$, which equals the ``$\chi$-ascending part'' of $(x,x\Delta)$ together with some amount of its ``$\chi$-flat part'' added.

\medskip

Here is our main application of all this setup:

\begin{proposition}\label{prop:one_pos}
 Let $\chi=\sum a_{i,j} \omega_{i,j}$ be a character of $P_n$. Suppose that $\sum a_{i,j}=0$, so $\chi(\Delta)=0$, and that either exactly one $a_{i,j}$ is positive, or exactly one $a_{i,j}$ is negative. Then $[\chi]\not\in\Sigma^{n-2}(P_n)$. If moreover none of the $a_{i,j}$ are zero, then $[\chi]\in\Sigma^{n-3}(P_n)$.
\end{proposition}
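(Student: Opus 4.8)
The plan is to run the ascending-type Morse function $(\chi,\dot{\cross})$ on $X$ and to understand all of its ascending links. By Observation~\ref{obs:antipodes} I may assume exactly one coefficient is positive, say $a_{p_0,q_0}>0$. Recall from Section~\ref{sec:results} (Lemma~\ref{lem:peeling_down}, Lemma~\ref{lem:peeling_up} and Corollary~\ref{cor:cible_palk}) that $\alk x$ is contractible unless $\cross(x)\le 0<\cross(x\Delta)$, equivalently $-\binom{n}{2}<\cross(x)\le 0$, in which case $\alk x\simeq\palk x$. So everything reduces to computing the homotopy type of $\palk x$ for these $x$, and to finding which of them are non-contractible and how negative their $\chi$-values can be.

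I will first treat the case that no $a_{i,j}$ is zero. Fix $x$ with $-\binom{n}{2}<\cross(x)\le 0$ and write $\sigma=\pi(x)$. Under the isomorphism between the interval $[1,\Delta]$ and the weak Bruhat order on $S_n$, the interval $(x,x\Delta]$ becomes $S_n\setminus\{[1,\dots,n]\}$, and $\palk x$ is the full subcomplex on those $\tau$ that are $\chi$-ascending, together with the $\chi$-flat ones of Coxeter length $\ell(\tau)\le|\cross(x)|$. The key step is to prove
\[
 \palk x=\Rev_n\bigl(\min((p_0)\sigma,(q_0)\sigma),\max((p_0)\sigma,(q_0)\sigma)\bigr).
\]
Indeed, for $p\in[1,\Delta]$ with $\pi(p)=\tau$ one has $\omega_{k,l}(p)=\tfrac12$ exactly when $\{k,l\}$ is an inversion of $\tau$, so $\chi(xp)-\chi(x)=\chi^x(p)$ depends only on $\tau$ and equals $\tfrac12\sum_{\{k,l\}\text{ inv.\ of }\tau}a_{(k)\sigma^{-1},(l)\sigma^{-1}}$. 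Among these reindexed coefficients exactly one is positive, namely the one indexed by $\{(p_0)\sigma,(q_0)\sigma\}$, and they still sum to $0$; a short calculation then gives $\chi^x(p)>0$ iff $\tau$ inverts $\{(p_0)\sigma,(q_0)\sigma\}$ and $\tau\ne[n,\dots,1]$, and $\chi^x(p)=0$ only for $\tau\in\{[1,\dots,n],[n,\dots,1]\}$. Since $[1,\dots,n]\notin(x,x\Delta]$ and $[n,\dots,1]$ has length $\binom{n}{2}>|\cross(x)|$ (so it is never $\dot{\cross}$-ascending), $\palk x$ is the full subcomplex on $\{\tau\ne[1,\dots,n],[n,\dots,1]:\tau\text{ inverts }\{(p_0)\sigma,(q_0)\sigma\}\}$, which is the reversing subcomplex above. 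By Lemmas~\ref{lem:close_reverse} and~\ref{lem:far_reverse}, $\palk x$ is contractible unless $\{(p_0)\sigma,(q_0)\sigma\}=\{1,n\}$, in which case $\alk x\simeq\palk x\simeq S^{n-3}$.

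With this in hand (still assuming no $a_{i,j}$ is zero), every $\alk x$ is a possibly-trivial wedge of $(n-3)$-spheres, hence $(n-4)$-connected with $\widetilde{H}_{n-2}(\alk x)=0$, and $X$ is contractible; so Corollary~\ref{cor:morse} with $m=n-3$ shows $(X_{\chi\ge t})_t$ is essentially $(n-4)$-connected, i.e.\ $[\chi]\in\Sigma^{n-3}(P_n)$. For the other half I will apply Proposition~\ref{prop:neg_conn} with $m=n-3$ (whose hypotheses are precisely those of the remark following it), so I only need sphere-type links $\alk x$ with $\chi(x)$ arbitrarily negative. Pick $\sigma\in S_n$ sending $\{p_0,q_0\}$ to $\{1,n\}$ and pick $x_0\in B_n$ with $\pi(x_0)=\sigma$ and $-\binom{n}{2}<\cross(x_0)\le 0$ (possible after multiplying by a suitable pure braid). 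The character $\chi^{x_0}|_{P_n}$ is a relabelling of $\chi$ whose coefficients are not all equal, so it is not proportional to $\cross|_{P_n}$ and hence is nontrivial on $\ker(\cross|_{P_n})\cap P_n$; choosing pure braids $p$ in that subgroup with $\chi^{x_0}(p)\to-\infty$, the vertices $x_0p$ satisfy $\pi(x_0p)=\sigma$, $\cross(x_0p)=\cross(x_0)$ and $\chi(x_0p)=\chi(x_0)+\chi^{x_0}(p)\to-\infty$, and by the previous paragraph $\alk(x_0p)\simeq S^{n-3}$. Thus $[\chi]\notin\Sigma^{n-2}(P_n)$ in this case.

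Finally, for a general $\chi$ as in the statement (some $a_{i,j}$ possibly zero) the conclusion $[\chi]\notin\Sigma^{n-2}(P_n)$ follows by approximation: replacing each vanishing coefficient by a small negative number and compensating in $a_{p_0,q_0}$ produces characters with exactly one positive coefficient, no zero coefficients, and zero sum, whose classes converge to $[\chi]$ in $S(P_n)$; each of these lies outside $\Sigma^{n-2}(P_n)$, and since that set is open its complement is closed, so $[\chi]$ lies outside it too. I expect the main obstacle to be the computation of $\palk x$ — identifying exactly which simplices of $(x,x\Delta]$ are ascending and recognising the result as a reversing subcomplex $\Rev_n(i,j)$; the remaining ingredients (assembling the Morse inputs, and checking that $\chi^{x_0}$ is not a multiple of $\cross$) are routine.
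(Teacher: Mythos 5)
Your proposal is correct and follows essentially the same route as the paper: reduce to the no-zero-coefficients, one-positive-coefficient case via closedness of $S(P_n)\setminus\Sigma^{n-2}(P_n)$ and Observation~\ref{obs:antipodes}, identify the only possibly non-contractible ascending links as $\palk x\cong\Rev_n((p_0)\sigma,(q_0)\sigma)$ via the $\chi^x$ reindexing and the fact that no proper non-empty subset of the coefficients sums to zero, and then feed Lemmas~\ref{lem:close_reverse} and~\ref{lem:far_reverse} into Corollary~\ref{cor:morse} and Proposition~\ref{prop:neg_conn}. Your explicit construction of vertices $x_0p$ with $\chi(x_0p)\to-\infty$ and $S^{n-3}$ ascending link (using that $\chi^{x_0}|_{P_n}$ is not a multiple of $\cross|_{P_n}$) is a welcome elaboration of a step the paper dismisses as clear.
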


\begin{proof}
 Any such $\chi$ is the limit of a sequence of characters of the same form, and such that no $a_{i,j}$ is zero. Since $S(P_n)\setminus\Sigma^{n-2}(P_n)$ is closed, we may assume without loss of generality that no $a_{i,j}$ is zero, and then both statements in the theorem will follow if we show that $[\chi]\in\Sigma^{n-3}(P_n)\setminus\Sigma^{n-2}(P_n)$. By Observation~\ref{obs:antipodes}, we may assume without loss of generality that exactly one $a_{i,j}$ is positive (and the others are negative). Say the positive one is $a_{k,\ell}$.
 
 We inspect ascending links $\alk x$ with respect to $(\chi,\dot{\cross})$. Thanks to our assumptions, no proper non-empty subset of $\{a_{i,j}\mid 1\le i<j\le n\}$ sums to zero, so the only vertices of $\lk_X x$ with the same $\chi$-value as $x$ are $x\Delta$ and $x\Delta^{-1}$. By Lemma~\ref{lem:peeling_down} and Corollary~\ref{cor:cible_palk}, $\alk x$ will be contractible unless $\cross(x)\le 0 < \cross(x\Delta)$, so we can assume these bounds hold. In this case, Lemma~\ref{lem:peeling_up} says $\alk x \simeq \palk x$, but we also have $\dot{\cross}(x\Delta)\lessdot \dot{\cross}(x)$, so $\palk x$ is just the subcomplex of $(x,x\Delta)$ supported on those vertices $y$ with $\chi(y)>\chi(x)$. Call this the \emph{$\chi$-ascending part} of $(x,x\Delta)$. (There is no ``$\chi$-flat part'' thanks to our assumptions.)
 
 For $y\in (x,x\Delta)$, say with $y=xp$ for $p\in B^+_n$, we have that $\chi(y)>\chi(x)$ if and only if $\chi^x(p)>0$, as discussed in Subsection~\ref{sec:char_ht}. Hence the $\chi$-ascending part of $(x,x\Delta)$ is isomorphic to the $\chi^x$-ascending part of $(1,\Delta)$. Since $\chi^x=\sum a_{i,j}\omega_{(i)\pi(x),(j)\pi(x)}$, the lone positive coefficient $a_{k,\ell}$ in $\chi^x$ is on $\omega_{(k)\pi(x),(\ell)\pi(x)}$. Let $p$ and $q$ denote $(k)\pi(x)$ and $(\ell)\pi(x)$, in whichever order gives us $p<q$. Thus a vertex of $(1,\Delta)$ is $\chi^x$-ascending if and only if it is $\omega_{p,q}$-ascending. If we view $(1,\Delta)$ as the proper part $\PW_n$ of the weak Bruhat lattice $\W_n$, then this is precisely the subcomplex $\Rev_n(p,q)$ of $\PW_n$. By Lemma~\ref{lem:close_reverse}, this is contractible unless $p=1$ and $q=n$, and then it is homotopy equivalent to $S^{n-3}$. In particular, $\alk x$ is always $(n-4)$-connected, so Corollary~\ref{cor:morse} says $[\chi]\in\Sigma^{n-3}(P_n)$. 
 
 Now, the case when $\alk x \simeq S^{n-3}$ happens precisely for those $x$ for which $\{(k)\pi(x),(\ell)\pi(x)\}=\{1,n\}$, and it is clear that for any $M\in\R$ there exists $x$ with $\chi(x)<M$ such that $\{(k)\pi(x),(\ell)\pi(x)\}=\{1,n\}$. Also, $\widetilde{H}_{n-2}(\alk x)=0$ for all $x$, so Proposition~\ref{prop:neg_conn} tells us that $[\chi]\not\in\Sigma^{n-2}(P_n)$.
\end{proof}

For $3\le m\le n$, consider the character
$$\chi^m_n\defeq \left(\sum\limits_{\stackrel{1\le i<j\le m}{(i,j)\ne(1,2)}} \omega_{i,j}\right) - \left(\binom{m}{2}-1\right)\omega_{1,2} \text{.}$$
Note that sum of the coefficients in $\chi^m_n$ is zero, exactly one coefficient is negative, and if $m=n$ then none of the coefficients is zero. In particular Proposition~\ref{prop:one_pos} tells us that $[\chi^m_m]\in\Sigma^{m-3}(P_m)\setminus\Sigma^{m-2}(P_m)$ and $[\chi^m_n]\not\in\Sigma^{n-2}(P_n)$. We can do better than this though. To prove our main theorem, that $\Sigma^{m-2}(P_n)\subseteq \Sigma^{m-3}(P_n)$ is a proper inclusion for all $3\le m\le n$, we will prove that $[\chi^m_n]\in \Sigma^{m-3}(P_n)\setminus \Sigma^{m-2}(P_n)$.

\begin{theorem}[Separation]\label{thrm:separation}
 For any $3\le m\le n$, the inclusion $\Sigma^{m-2}(P_n)\subseteq \Sigma^{m-3}(P_n)$ is proper. Explicitly, $[\chi^m_n]\in \Sigma^{m-3}(P_n)\setminus \Sigma^{m-2}(P_n)$.
\end{theorem}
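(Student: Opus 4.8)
The plan is to reduce both halves of the statement to Proposition~\ref{prop:one_pos} applied to $P_m$, rather than directly to $P_n$, by passing through the split natural projection $\phi\defeq\phi_{\{1,\dots,m\}}\colon P_n\to P_m$. The point to observe first is that every coefficient of $\chi^m_n$ supported on a pair $(i,j)$ with $j>m$ vanishes, so $\chi^m_n=\chi^m_m\circ\phi$, where $\chi^m_m$ is the evident character of $P_m$: its coefficients sum to zero, exactly one (namely $a_{1,2}$) is negative, and since $m\ge 3$ none of them is zero. Hence Proposition~\ref{prop:one_pos}, applied with $m$ in place of $n$, already gives $[\chi^m_m]\in\Sigma^{m-3}(P_m)\setminus\Sigma^{m-2}(P_m)$, and the whole task is to transport this along $\phi$.

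For the non-membership $[\chi^m_n]\notin\Sigma^{m-2}(P_n)$ I would invoke Citation~\ref{cit:push_neg}. Since $\phi$ is split (Definition~\ref{def:nat_proj}) and $\chi^m_n$ is the pullback $\chi^m_m\circ\phi$, that citation says that if $[\chi^m_n]\in\Sigma^{m-2}(P_n)$ then $[\chi^m_m]\in\Sigma^{m-2}(P_m)$; as the latter is false, so is the former.

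For the membership $[\chi^m_n]\in\Sigma^{m-3}(P_n)$ I would apply Lemma~\ref{lem:push_pos} with $G=P_n$, $Q=P_m$, $\pi=\phi$ and $\chi=\chi^m_m$. Two hypotheses must be verified. First, $\ker(\phi)$ must be of type $\F_{m-3}$; this is the only genuinely external ingredient, and it follows from the classical Fadell--Neuwirth fibrations, under which $\ker(\phi_{\{1,\dots,m\}})$ is an iterated extension of finitely generated free groups and is therefore of type $\F_\infty$ (indeed of type $\F$), hence certainly of type $\F_{m-3}$. Second, I need that every non-trivial character $\psi$ of $P_m$ with $\psi(\ker(\chi^m_m))=0$ satisfies $[\psi]\in\Sigma^{m-3}(P_m)$. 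For this, $\chi^m_m$ takes values in $\Z$ and is non-trivial (in fact surjective, since it has a coefficient equal to $1$), so $P_m/\ker(\chi^m_m)\cong\Z$ and every such $\psi$ is a nonzero scalar multiple of $\chi^m_m$; thus $[\psi]\in\{[\chi^m_m],[-\chi^m_m]\}$, and both of these lie in $\Sigma^{m-3}(P_m)$, the first by Proposition~\ref{prop:one_pos} and the second by Observation~\ref{obs:antipodes}. Lemma~\ref{lem:push_pos} then delivers $[\chi^m_n]\in\Sigma^{m-3}(P_n)$.

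The substance here is bookkeeping rather than a new idea: one has to be careful that $\chi^m_n$ really does factor through $\phi$ as claimed, that the characters of $P_m$ killing $\ker(\chi^m_m)$ are precisely the scalar multiples of $\chi^m_m$, and that the cited finiteness property of $\ker(\phi)$ is the one actually needed. An alternative route to the membership would be to rerun the Morse-theoretic computation behind Proposition~\ref{prop:one_pos} on the complex $X$ for $P_n$ itself, where the vanishing coefficients would force the ascending links to become ``$\chi$-ascending plus $\chi$-flat'' pieces of the intervals $(x,x\Delta)$; but checking their connectivity directly is more work than the reduction through $P_m$, so I would not take that route.
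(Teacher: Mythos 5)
Your proposal is correct and follows essentially the same route as the paper: both halves reduce to Proposition~\ref{prop:one_pos} for $P_m$ via $\chi^m_n=\chi^m_m\circ\phi_{\{1,\dots,m\}}$, with Citation~\ref{cit:push_neg} for non-membership and Lemma~\ref{lem:push_pos} plus Observation~\ref{obs:antipodes} (using that $\chi^m_m$ is discrete) for membership. The only cosmetic difference is that you apply Lemma~\ref{lem:push_pos} in one step, justifying that $\ker(\phi_{\{1,\dots,m\}})$ is of type $\F_\infty$ via the Fadell--Neuwirth iterated free-group extensions, whereas the paper iterates the one-strand projections $P_k\to P_{k-1}$, each with free kernel $F_{k-1}$; both are valid.
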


\begin{proof}
 For the negative statement, note that $\chi^m_n = \chi^m_m \circ \phi_{\{1,\dots,m\}}$, so Citation~\ref{cit:push_neg} and Proposition~\ref{prop:one_pos} tell us that $[\chi^m_n]\not\in\Sigma^{m-2}(P_n)$. For the positive statement, it is known that the kernel of $\phi_{\{1,\dots,n-1\}}\colon P_n\to P_{n-1}$ is isomorphic to the free group $F_{n-1}$ \cite[Theorem~1.16]{kassel08}, which is of type $\F_\infty$, so Lemma~\ref{lem:push_pos} and Observation~\ref{obs:antipodes} say that when a discrete character on $P_n$ is induced from one on $P_{n-1}$, it inherits the latter's positive BNSR-invariant properties. By induction, when a discrete character on $P_n$ is induced from one on $P_m$ for any $m<n$, it inherits the latter's positive BNSR-invariant properties. Since $[\chi^m_m] \in \Sigma^{m-3}(P_m)$ by Proposition~\ref{prop:one_pos}, we conclude that $[\chi^m_n] \in \Sigma^{m-3}(P_n)\setminus \Sigma^{m-2}(P_n)$.
\end{proof}

\begin{corollary}\label{cor:separation}
 For any $3\le m\le n$, $\ker(\chi^m_n)$ is of type $\F_{m-3}$ but not $\F_{m-2}$.
\end{corollary}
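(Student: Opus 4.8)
The plan is to derive this as a formal consequence of Theorem~\ref{thrm:separation} together with the finiteness criterion Citation~\ref{cit:sig_fin}. First I would record that $\chi^m_n$ is a discrete character, i.e., $\chi^m_n(P_n)=\Z$: since $m\ge 3$ the pair $(1,3)$ occurs in the defining sum with coefficient $1$, and picking a pure braid in which strands $1$ and $3$ wind once around one another while all other pairs of strands are unlinked yields a group element on which $\chi^m_n$ takes the value $1$. Hence $N\defeq\ker(\chi^m_n)$ is a normal subgroup with $P_n/N\cong\Z$; in particular $N$ is coabelian. Also $P_n$ is of type $\F_\infty$ (it even has a finite classifying space, e.g.\ the Brady complex), so Citation~\ref{cit:sig_fin} is available at every level $\F_k$.

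Next I would identify the relevant characters. Any non-trivial character $\psi$ of $P_n$ with $\psi(N)=0$ factors through $P_n/N\cong\Z$, hence is a non-zero scalar multiple of $\chi^m_n$; thus $[\psi]\in\{[\chi^m_n],[-\chi^m_n]\}$. By Observation~\ref{obs:antipodes}, for each $k$ we have $[\chi^m_n]\in\Sigma^k(P_n)$ if and only if $[-\chi^m_n]\in\Sigma^k(P_n)$. Combining these two facts, Citation~\ref{cit:sig_fin} tells us that $N$ is of type $\F_k$ if and only if $[\chi^m_n]\in\Sigma^k(P_n)$.

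Finally I would invoke Theorem~\ref{thrm:separation}, which gives $[\chi^m_n]\in\Sigma^{m-3}(P_n)\setminus\Sigma^{m-2}(P_n)$. Taking $k=m-3$ shows $N$ is of type $\F_{m-3}$, and taking $k=m-2$ shows $N$ is not of type $\F_{m-2}$, which is exactly the claim. The only point requiring any care is the discreteness observation in the first step; the rest is a direct bookkeeping application of results already established, so I do not anticipate a genuine obstacle here.
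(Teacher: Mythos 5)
Your proposal is correct and follows exactly the paper's route: the paper's proof is the one-line observation that the corollary is immediate from Citation~\ref{cit:sig_fin}, Observation~\ref{obs:antipodes} and Theorem~\ref{thrm:separation}, and you have simply spelled out the bookkeeping (including the useful check that $\chi^m_n$ is discrete, so the only character classes vanishing on $\ker(\chi^m_n)$ are $[\pm\chi^m_n]$). No gaps.
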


\begin{proof}
 This is immediate from Citation~\ref{cit:sig_fin}, Observation~\ref{obs:antipodes} and Theorem~\ref{thrm:separation}.
\end{proof}

We have thus found examples of coabelian (even ``cocyclic'') subgroups of $P_n$ with every possible finiteness length, namely $0$ through $n-2$.

\medskip

Here is a nice, easy-to-state result in a related vein, which follows by combining our results with the full computation of $\Sigma^1(P_n)$ in \cite{koban15}.

\begin{corollary}
 For any $n\ge4$, $\ker(\omega_{1,2}-\omega_{3,4})\le P_n$ is finitely generated but not finitely presentable.
\end{corollary}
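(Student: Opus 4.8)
The plan is to realize the character $\omega_{1,2}-\omega_{3,4}$ as being built from a character on a smaller pure braid group, and then to play off the known value of $\Sigma^1(P_4)$ from \cite{koban15} against the higher BNSR-invariant machinery already in place. First I would observe that $\chi\defeq\omega_{1,2}-\omega_{3,4}$ satisfies $\chi=\chi'\circ\phi_{\{1,2,3,4\}}$, where $\chi'=\omega_{1,2}-\omega_{3,4}$ is the corresponding character on $P_4$ and $\phi_{\{1,2,3,4\}}\colon P_n\to P_4$ is the natural projection of Definition~\ref{def:nat_proj}. For the negative statement (not finitely presentable, i.e.\ $[\pm\chi]\notin\Sigma^2(P_n)$), I would use Citation~\ref{cit:push_neg}: since $\phi_{\{1,2,3,4\}}$ is a split epimorphism, if $[\chi]$ were in $\Sigma^2(P_n)$ then $[\chi']$ would be in $\Sigma^2(P_4)$. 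So it suffices to show $[\pm\chi']\notin\Sigma^2(P_4)$. Here I would invoke the computation of $\Sigma^1(P_4)$ in \cite{koban15}: the class $[\omega_{1,2}-\omega_{3,4}]$ (and its antipode) should fail already to lie in $\Sigma^1(P_4)$, and since $\Sigma^2(P_4)\subseteq\Sigma^1(P_4)$ always, this immediately gives $[\pm\chi']\notin\Sigma^2(P_4)$, hence $[\pm\chi]\notin\Sigma^2(P_n)$, so $\ker(\chi)$ is not finitely presentable by Citation~\ref{cit:sig_fin} and Observation~\ref{obs:antipodes}.

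For the positive statement (finitely generated, i.e.\ $[\pm\chi]\in\Sigma^1(P_n)$), the idea is to propagate a positive $\Sigma^1$ result up from $P_4$ using Lemma~\ref{lem:push_pos} exactly as in the proof of Theorem~\ref{thrm:separation}. Concretely, the kernel of $\phi_{\{1,\dots,n-1\}}\colon P_n\to P_{n-1}$ is free of finite rank, hence of type $\F_\infty$ (and in particular of type $\F_1$), so Lemma~\ref{lem:push_pos} says that a discrete character on $P_n$ induced from one on $P_{n-1}$ inherits membership in $\Sigma^1$; inducting down to $P_4$, it suffices to check $[\pm\chi']\in\Sigma^1(P_4)$. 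Again this should be read off directly from the description of $\Sigma^1(P_4)$ in \cite{koban15}: the class $[\omega_{1,2}-\omega_{3,4}]$, coming from a character supported on the two ``disjoint'' pairs $\{1,2\}$ and $\{3,4\}$, ought to be one of the classes that does lie in $\Sigma^1(P_4)$ — intuitively because the winding of strands $1,2$ around each other and the winding of strands $3,4$ around each other are ``independent enough'' that $\ker(\chi')$ is still finitely generated. Combining the two halves gives the corollary.

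The main obstacle I anticipate is purely bookkeeping with the Koban--McCammond--Meier description of $\Sigma^1(P_n)$: one has to match up their combinatorial criterion for a character class to lie in $\Sigma^1(P_n)$ with the particular class $[\omega_{1,2}-\omega_{3,4}]$, and verify both that it lies in $\Sigma^1$ and that it lies in no higher invariant by the cheap containment $\Sigma^2\subseteq\Sigma^1$. In \cite{koban15} the BNS-invariant $\Sigma^1(P_n)$ is described in terms of a graph-theoretic condition on the support of the character (roughly, the ``living subgraph'' being connected and dominating, or some such), and the content here is simply checking that the support of $\omega_{1,2}-\omega_{3,4}$ — two disjoint edges in the relevant graph on $\{1,2,3,4\}$ — satisfies the $\Sigma^1$ criterion but that the class is not forced into $\Sigma^2$. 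Alternatively, if one prefers to stay self-contained, one could note $P_4\cong(P_4/Z(P_4))\times\Z$ and that $\chi'$ kills the center, so $\chi'$ descends to a character of the mapping class group of the $5$-punctured sphere, but I expect the cleanest route is just to cite \cite{koban15} directly for both the positive and negative $\Sigma^1$ facts and let the general containment $\Sigma^2(P_n)\subseteq\Sigma^1(P_n)$ do the rest.
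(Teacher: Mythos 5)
There is a genuine gap in your negative half, and it contradicts your positive half. You assert that $[\omega_{1,2}-\omega_{3,4}]$ fails to lie in $\Sigma^1(P_4)$ and then deduce that it is not in $\Sigma^2(P_4)$ from the containment $\Sigma^2\subseteq\Sigma^1$. But this class \emph{does} lie in $\Sigma^1(P_4)$ --- that is precisely what the computation in \cite{koban15} gives, and it is exactly what your positive half (correctly) relies on. Indeed, if $[\omega_{1,2}-\omega_{3,4}]$ were not in $\Sigma^1(P_4)$, then by Citation~\ref{cit:push_neg} it would not be in $\Sigma^1(P_n)$ either, and $\ker(\omega_{1,2}-\omega_{3,4})$ would fail to be finitely generated, contradicting the very statement you are proving. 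The whole content of the corollary is that the drop in finiteness properties occurs strictly between $\Sigma^1$ and $\Sigma^2$, so the containment $\Sigma^2\subseteq\Sigma^1$ can never supply the $\Sigma^2$ obstruction here; you need the Morse-theoretic input. The correct route, and the one the paper takes, is Proposition~\ref{prop:one_pos}: on $P_4$ the character $\omega_{1,2}-\omega_{3,4}$ has coefficient sum zero and exactly one positive coefficient, so that proposition yields $[\omega_{1,2}-\omega_{3,4}]\notin\Sigma^{2}(P_4)$ directly, and Citation~\ref{cit:push_neg} then transfers this to $P_n$ exactly as you describe.

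Your positive half is sound but more roundabout than necessary: since \cite{koban15} computes $\Sigma^1(P_n)$ completely for all $n$, one can read off $[\pm(\omega_{1,2}-\omega_{3,4})]\in\Sigma^1(P_n)$ directly, with no need for the induction via Lemma~\ref{lem:push_pos} (though that induction would also work, using Observation~\ref{obs:antipodes} to supply both antipodal classes as hypotheses). With the negative half repaired as above, the conclusion follows from Citation~\ref{cit:sig_fin} and Observation~\ref{obs:antipodes} as you intend.
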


\begin{proof}
 It suffices by Citation~\ref{cit:sig_fin} and Observation~\ref{obs:antipodes} to prove that $[\omega_{1,2}-\omega_{3,4}]\in\Sigma^1(P_n)\setminus \Sigma^2(P_n)$. That it lies in $\Sigma^1$ follows from the complete computation of $\Sigma^1(P_n)$ done in \cite{koban15}. Now we claim that $[\omega_{1,2}-\omega_{3,4}]\not\in\Sigma^2(P_n)$. Using the natural projection $\phi_{\{1,2,3,4\}} \colon P_n\to P_4$, it suffices by Citation~\ref{cit:push_neg} to prove this in the $n=4$ case, but this is immediate from Proposition~\ref{prop:one_pos}.
\end{proof}

\begin{remark}
 Ideally one would like a complete computation of $\Sigma^m(P_n)$ for all $m$ and $n$. Using our setup, if one can show that, for any $x\in B_n$ and any $0\le k< \cross(\Delta)$, the subcomplex of $\PW_n$ supported on those vertices $p$ with either $\chi^x(p)>0$, or $\chi^x(p)=0$ and $\cross(p)\le k$, is homotopy equivalent to a wedge of $m$-spheres, then we could conclude that $[\chi]\in\Sigma^{m-1}(P_n)\setminus \Sigma^m(P_n)$. However, for characters $\chi$ other than those of the type considered here, it is unclear at present how to analyze the homotopy types of these subcomplexes. Also, have found that this method will not always work, for example the Morse function $(\chi,\dot{\cross})$ cannot fully recover the results of \cite{koban15} on $\Sigma^1(P_n)$. Namely, for $\chi$ in a ``$P_4$-circle'', so $[\chi]\not\in\Sigma^1(P_n)$, we have computed that there exist ascending links homotopy equivalent to $S^0$ and others homotopy equivalent to $S^1$, so Proposition~\ref{prop:neg_conn} does not apply.
\end{remark}

\bibliographystyle{amsalpha}

\begin{thebibliography}{BLV{\v{Z}}94}

\bibitem[AMP14]{aramayona14}
J.~Aramayona and C.~Mart{\'{\i}}nez-P{\'e}rez, \emph{The proper geometric
  dimension of the mapping class group}, Algebr. Geom. Topol. \textbf{14}
  (2014), no.~1, 217--227.

\bibitem[BB97]{bestvina97}
M.~Bestvina and N.~Brady, \emph{Morse theory and finiteness properties of
  groups}, Invent. Math. \textbf{129} (1997), no.~3, 445--470.

\bibitem[BEZ90]{bjoerner90}
A.~Bj{\"o}rner, P.~H. Edelman, and G.~M. Ziegler, \emph{Hyperplane arrangements
  with a lattice of regions}, Discrete Comput. Geom. \textbf{5} (1990), no.~3,
  263--288.

\bibitem[BG99]{bux99}
K.-U. Bux and C.~Gonzalez, \emph{The {B}estvina-{B}rady construction revisited:
  geometric computation of {$\Sigma$}-invariants for right-angled {A}rtin
  groups}, J. London Math. Soc. (2) \textbf{60} (1999), no.~3, 793--801.

\bibitem[BGK10]{bieri10}
R.~Bieri, R.~Geoghegan, and D.~H. Kochloukova, \emph{The {S}igma invariants of
  {T}hompson's group {$F$}}, Groups Geom. Dyn. \textbf{4} (2010), no.~2,
  263--273.

\bibitem[BH99]{bridson99}
M.~R. Bridson and A.~Haefliger, \emph{Metric spaces of non-positive curvature},
  Die Grundlehren der Mathematischen Wissenschaften, vol. 319, Springer, 1999.

\bibitem[BLV{\v{Z}}94]{bjoerner94}
A.~Bj{\"o}rner, L.~Lov{\'a}sz, S.~T. Vre{\'c}ica, and R.~T.
  {\v{Z}}ivaljevi{\'c}, \emph{Chessboard complexes and matching complexes}, J.
  London Math. Soc. (2) \textbf{49} (1994), no.~1, 25--39.

\bibitem[BNS87]{bieri87}
R.~Bieri, W.~D. Neumann, and R.~Strebel, \emph{A geometric invariant of
  discrete groups}, Invent. Math. \textbf{90} (1987), no.~3, 451--477.

\bibitem[BR88]{bieri88}
R.~Bieri and B.~Renz, \emph{Valuations on free resolutions and higher geometric
  invariants of groups}, Comment. Math. Helv. \textbf{63} (1988), no.~3,
  464--497.

\bibitem[Bra01]{brady01}
T.~Brady, \emph{A partial order on the symmetric group and new {$K(\pi,1)$}'s
  for the braid groups}, Adv. Math. \textbf{161} (2001), no.~1, 20--40.

\bibitem[Bux04]{bux04}
K.-U. Bux, \emph{Finiteness properties of soluble arithmetic groups over global
  function fields}, Geom. Topol. \textbf{8} (2004), 611--644 (electronic).

\bibitem[CMW04]{charney04}
R.~Charney, J.~Meier, and K.~Whittlesey, \emph{Bestvina's normal form complex
  and the homology of {G}arside groups}, Geom. Dedicata \textbf{105} (2004),
  171--188.

\bibitem[FM12]{farb12}
B.~Farb and D.~Margalit, \emph{A primer on mapping class groups}, Princeton
  Mathematical Series, vol.~49, Princeton University Press, Princeton, NJ,
  2012.

\bibitem[Har86]{harer86}
J.~L. Harer, \emph{The virtual cohomological dimension of the mapping class
  group of an orientable surface}, Invent. Math. \textbf{84} (1986), no.~1,
  157--176.

\bibitem[KMM15]{koban15}
N.~Koban, J.~McCammond, and J.~Meier, \emph{The {BNS}-invariant for the pure
  braid groups}, Groups Geom. Dyn. (2015), To appear. arXiv:1306.4046.

\bibitem[KT08]{kassel08}
C.~Kassel and V.~Turaev, \emph{Braid groups}, Graduate Texts in Mathematics,
  vol. 247, Springer, New York, 2008, With the graphical assistance of Olivier
  Dodane.

\bibitem[Mei97]{meinert97}
H.~Meinert, \emph{Actions on {$2$}-complexes and the homotopical invariant
  {$\Sigma^2$} of a group}, J. Pure Appl. Algebra \textbf{119} (1997), no.~3,
  297--317.

\bibitem[MM09]{margalit09}
D.~Margalit and J.~McCammond, \emph{Geometric presentations for the pure braid
  group}, J. Knot Theory Ramifications \textbf{18} (2009), no.~1, 1--20.

\bibitem[MMV98]{meier98}
J.~Meier, H.~Meinert, and L.~VanWyk, \emph{Higher generation subgroup sets and
  the {$\Sigma$}-invariants of graph groups}, Comment. Math. Helv. \textbf{73}
  (1998), no.~1, 22--44.

\bibitem[MMV01]{meier01}
\bysame, \emph{On the {$\Sigma$}-invariants of {A}rtin groups}, Topology Appl.
  \textbf{110} (2001), no.~1, 71--81, Geometric topology and geometric group
  theory (Milwaukee, WI, 1997).

\bibitem[Qui78]{quillen78}
D.~Quillen, \emph{Homotopy properties of the poset of nontrivial
  {$p$}-subgroups of a group}, Adv. in Math. \textbf{28} (1978), no.~2,
  101--128.

\bibitem[WZ15]{witzel15}
S.~Witzel and M.~C.~B. Zaremsky, \emph{The {$\Sigma$}-invariants of
  {T}hompson's group {$F$}, via {M}orse theory}, Submitted. arXiv:1501.06682,
  2015.

\bibitem[Zar15]{zaremsky15}
M.~C.~B. Zaremsky, \emph{On the {$\Sigma$}-invariants of generalized {T}hompson
  groups and {H}oughton groups}, Submitted. arXiv:1502.02620, 2015.

\end{thebibliography}
\providecommand{\bysame}{\leavevmode\hbox to3em{\hrulefill}\thinspace}
\providecommand{\MR}{\relax\ifhmode\unskip\space\fi MR }
\providecommand{\MRhref}[2]{%
  \href{http://www.ams.org/mathscinet-getitem?mr=#1}{#2}
}
\providecommand{\href}[2]{#2}

\end{document}